\numberwithin{equation}{section}
\newtheorem{theorem}[equation]{Theorem}
\newtheorem{corollary}[equation]{Corollary}
\newtheorem{lemma}[equation]{Lemma}
\newtheorem{proposition}[equation]{Proposition}
\theoremstyle{definition}
\newtheorem{remark}[equation]{Remark}
\def\limind{\mathop{\oalign{{\rm lim}\cr
\hidewidth$\longrightarrow$\hidewidth\cr}}}
\def\limproj{\mathop{\oalign{{\rm lim}\cr
\hidewidth$\longleftarrow$\hidewidth\cr}}}
\def\Z{{\mathbb {Z}}_p}
\def\Q{{\mathbb {Q}}_p}
\def\G{{\rm GL }_2(\Q)}
\def\ra{\rightarrow}
\def\ZZ{{\mathbb Z}}
\def\Qp{{\mathbb{Q}}_p}
\def\Cp{{\mathbb{C}}_p}
\def\Zp{{\mathbb{Z}}_p}
\def\eps{\varepsilon}
\def\OO{\mathcal{O}}
\def\aa{\textbf{A}}
\def\bb{\textbf{B}}
\def\ee{{\textbf{E}}}
\def\at{\widetilde{\textbf{A}}}
\def\bt{\widetilde{\textbf{B}}}
\def\et{\widetilde{\textbf{E}}}
\def\aplus{\textbf{A}^+}
\def\bplus{\textbf{B}^+}
\def\atplus{\widetilde{\textbf{A}}^+}
\def\btplus{\widetilde{\textbf B}^+}
\def\etplus{\widetilde{\textbf{E}}^+}
\def\bbQ{\mathbb{Q}}
\def\calE{\mathcal{E}}
\def\calR{\mathcal{R}}
\def\Gal{\mathrm{Gal}}
\def\rig{\mathrm{rig}}
\def\dif{\mathrm{dif}}
\def\res{\mathrm{res}}
\def\Res{\mathrm{Res}}
\def\D{\textbf{D}}
\DeclareMathOperator{\rank}{rank}
\DeclareMathOperator{\Ind}{Ind}
\def\L{\mathbb{Q}_p(\varepsilon^{(n)})}
\def\m{(\varphi,\Gamma)}
\def\r{\mathcal{R}}
\newcommand{\btdag}[1]{\widetilde{\textbf{B}}^{\dagger #1}}
\newcommand{\atdag}[1]{\widetilde{\textbf{A}}^{\dagger #1}}
\newcommand{\bdag}[1]{\textbf{B}^{\dagger #1}}
\newcommand{\adag}[1]{\textbf{A}^{\dagger #1}}
\newcommand{\btrig}[2]{\textbf{B}^{\dagger #1}_{\mathrm{rig} #2}}
\begin{document}

\title{Cohomology and Duality for $(\varphi, \Gamma)$-modules over the Robba Ring}

\author{Ruochuan Liu}

\author{Ruochuan Liu\\ Department of Mathematics, Room 2-229\\ Massachusetts Institute of Technology\\
77 Massachusetts Avenue\\ Cambridge, MA 02139\\ ruochuan@math.mit.edu}
\maketitle

\begin{abstract}
Given a $p$-adic representation of the Galois group of a local field,
we show that its Galois cohomology can be computed using the associated
\'etale $(\varphi, \Gamma)$-module over the Robba ring; this is a variant
of a result of Herr. We then establish analogues, for not necessarily \'etale
$(\varphi, \Gamma)$-modules over the Robba ring, of
the Euler-Poincar\'e characteristic formula and Tate local duality for
$p$-adic representations. These results are expected to intervene in the duality theory for Selmer
groups associated to de Rham representations.
\end{abstract}

\tableofcontents

\section*{Introduction}

Two of the basic results in the theory of Galois cohomology over a local field
are the Euler-Poincar\'e characteristic formula and Tate's local duality
theorem. In this paper, we generalize these results to a larger category
than the category of $p$-adic representations, namely the category of
$(\varphi, \Gamma)$-modules over the Robba ring. We expect that these results
will be relevant to the deformation theory of Galois representations, via a
study of duality properties of Selmer groups associated to
de Rham representations. This would extend the work of
Bloch-Kato for ordinary representations \cite{BK90}. See \cite{P} for more details.

In the remainder of this introduction, we formulate more
precise statements of our results (but we skip some definitions
found in the body of the text).
Let $p$ be a prime number,
and fix a finite extension $K$ of $\bbQ_p$.
Write $G_K = \Gal(\overline{K}/K)$ and $\Gamma =
\Gamma_K = \Gal(K(\mu_{p^\infty})/K)$.
By a ``$p$-adic representation'' let us mean a finite dimensional
$\bbQ_p$-vector space $V$ equipped with a continuous linear action
of $G_K = \Gal(\overline{K}/K)$. Fontaine \cite{Fo91} constructed
a functor $\D$ associating to each $p$-adic representation $V$ an
\emph{\'etale $(\varphi, \Gamma)$-module} $\D(V)$ over a certain two-dimensional
local field $\calE_K$, and established an equivalence of categories between
$p$-adic representations and \'etale $(\varphi, \Gamma)$-modules.
For the moment, all we will say about $\D(V)$ is that
$\calE_K$ is constructed to carry a Frobenius operator
$\varphi$ and an action of $\Gamma$ commuting with each other,
and $\D(V)$ is a finite dimensional $\calE_K$-vector space carrying semilinear actions
of $\varphi$ and $\Gamma$. (The condition of \'etaleness is a certain extra
restriction on the $\varphi$-action.)

Fontaine's equivalence suggests that one can compute Galois cohomology of
a $p$-adic representation from the ostensibly simpler object $\D(V)$; this
was worked out by Herr \cite{H98}, who constructed an explicit
complex from $\D(V)$ computing the Galois cohomology of $V$ and the cup
product. In case
$\Gamma$ is procyclic and topologically generated by $\gamma$,
the complex is particularly easy to describe: it is the complex
\[
0 \to\D(V) \stackrel{d_1}{\to}\D(V) \oplus\D(V) \stackrel{d_2}{\to}\D(V) \to 0
\]
with $d_1(x) = ((\gamma - 1)x, (\varphi - 1)x)$ and $d_2((x,y)) =
(\varphi - 1)x - (\gamma - 1)y$. Herr also showed \cite{H01} that
one can easily recover the Euler-Poincar\'e characteristic formula
and Tate local duality from this description.

More recently, Berger \cite{LB02} (building on work of Cherbonnier
and Colmez \cite{CC98}) constructed a functor $\D^\dagger_{\rig}$
giving an equivalence between the category of $p$-adic
representations and the category of \'etale $(\varphi,
\Gamma)$-modules over a different ring, the \emph{Robba ring}
$\calR_K$. Berger's original justification for introducing
$\D^\dagger_{\rig}$ was to show that for $V$ a de Rham
representation, $\D^\dagger_{\rig}(V)$ can be used to construct a
$p$-adic differential equation of the sort addressed by Crew's
conjecture; this led Berger to prove that the $p$-adic monodromy
conjecture of $p$-adic differential equations implies Fontaine's
conjecture that de Rham representations are potentially semistable.

Subsequently, Colmez \cite{C05} observed that non-\'etale $(\varphi,
\Gamma)$-modules over $\calR_K$ play a role in the study of Galois
representations, even though they do not themselves correspond to
representations. Colmez specifically considered the class of
two-dimensional representations which are \emph{trianguline} (this
idea goes back to Mazur), that is, their associated \'etale
$(\varphi, \Gamma)$-modules over $\calR_K$ admit filtrations by not
necessarily \'etale $(\varphi, \Gamma)$-submodules with successive
quotients free of rank 1. (The supply of such representations is
plentiful: for instance, a result of Kisin \cite{Ki03} implies that
many of the Galois representations associated to overconvergent
$p$-adic modular forms are trianguline.) Colmez classified these
representations in the dimension 2 case and showed that they fit
naturally into the $p$-adic local Langlands correspondence of $\G$
initiated by Breuil.

In so doing, Colmez introduced an analogue of Herr's complex for an
arbitrary $(\varphi, \Gamma)$-module over $\calR_K$. Although he
does not explicitly assert that this complex computes Galois
cohomology, we infer that he had the following theorem in mind; we
include its proof, an easy reduction to Herr's theorem, to fill a
gap in the literature.

\begin{theorem}
Let $V$ be a $p$-adic representation of $G_K$. Then there are isomorphisms
\[
H^i(\D^\dagger_{\rig}(V)) \cong H^i(G_K, V) \qquad (i=0,1,2)
\]
which are functorial in $V$ and compatible with cup products.
\end{theorem}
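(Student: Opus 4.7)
The plan is to factor the comparison through Cherbonnier-Colmez's overconvergent $(\varphi,\Gamma)$-module $\D^\dagger(V)$, which by construction satisfies $\calE_K\otimes_{\bdag{}_K}\D^\dagger(V)=\D(V)$ and, by Berger's construction, $\calR_K\otimes_{\bdag{}_K}\D^\dagger(V)=\D^\dagger_{\rig}(V)$. This yields canonical $(\varphi,\Gamma)$-module inclusions $\D^\dagger(V)\hookrightarrow\D(V)$ and $\D^\dagger(V)\hookrightarrow\D^\dagger_{\rig}(V)$, each of which gives a morphism of the three-term Herr-type complexes recalled in the introduction. It will suffice to show that both induced morphisms are quasi-isomorphisms, for then Herr's theorem will transport the desired identification of cohomology from $\D(V)$ to $\D^\dagger_{\rig}(V)$.

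For each inclusion $\D^\dagger(V)\hookrightarrow M$ (with $M=\D(V)$ or $\D^\dagger_{\rig}(V)$), the exactness of the Herr construction in $M$ turns the short exact sequence $0\to\D^\dagger(V)\to M\to M/\D^\dagger(V)\to 0$ into a short exact sequence of complexes, and via the long exact sequence in cohomology the claim reduces to showing that the Herr cohomology of $M/\D^\dagger(V)$ vanishes in all degrees. In terms of the explicit differentials, this will follow once one shows that either $\varphi-1$ or $\gamma-1$ acts bijectively on $M/\D^\dagger(V)$. For $M=\D(V)$, the quotient is $(\calE_K/\bdag{}_K)\otimes_{\bdag{}_K}\D^\dagger(V)$, and the required bijectivity is essentially built into Cherbonnier-Colmez's overconvergence theorem, which produces $\D^\dagger(V)$ precisely as a lattice on which the relevant operator is well-controlled. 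For $M=\D^\dagger_{\rig}(V)$ the analogous statement concerns $(\calR_K/\bdag{}_K)\otimes_{\bdag{}_K}\D^\dagger(V)$ and should be obtained by adapting those arguments to the Fr\'echet completion underlying the Robba ring.

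Compatibility with cup products is then formal: cup products on each Herr-type complex are given by a universal formula in $\varphi$ and $\gamma$, natural in the $(\varphi,\Gamma)$-module, and the inclusions above respect tensor products, so the cup products are automatically intertwined by the induced quasi-isomorphisms. The main obstacle is the acyclicity of the Herr complex on the quotients, and in particular the case $M=\D^\dagger_{\rig}(V)$: whereas $\calE_K$ is a single $p$-adic completion of $\bdag{}_K$, the Robba ring $\calR_K$ is a Fr\'echet completion for a family of Gauss norms with shrinking radii of convergence, so establishing bijectivity of $\varphi-1$ (or $\gamma-1$) on $(\calR_K/\bdag{}_K)\otimes_{\bdag{}_K}\D^\dagger(V)$ will require the finer radius-of-convergence estimates characteristic of Berger's framework, and this is where I expect the real technical work to lie.
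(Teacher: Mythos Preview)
Your overall architecture—factoring through $\D^\dagger(V)$ and comparing separately to $\D(V)$ and to $\D^\dagger_{\rig}(V)$—is exactly what the paper does. But the two halves behave oppositely to what you expect, and one of them does not go through as you outline.

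For the inclusion $\D^\dagger(V)\hookrightarrow\D^\dagger_{\rig}(V)$, your plan is in fact precisely the paper's argument, and it is the \emph{easy} half: Kedlaya's slope-theory paper already proves that $\D^\dagger(V)^{\varphi=1}\to\D^\dagger_{\rig}(V)^{\varphi=1}$ and $\D^\dagger(V)/(\varphi-1)\to\D^\dagger_{\rig}(V)/(\varphi-1)$ are bijections, which is exactly the statement that $\varphi-1$ is bijective on the quotient. No new radius-of-convergence estimates are needed.

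The gap is in the comparison $\D^\dagger(V)\hookrightarrow\D(V)$. Neither $\varphi-1$ nor $\gamma-1$ is known to be bijective on $\D(V)/\D^\dagger(V)$, and this is \emph{not} ``essentially built into'' the Cherbonnier--Colmez theorem: their overconvergence result produces the lattice $\D^\dagger(V)$ but says nothing directly about the Herr complex of the (huge, non-finitely-generated) quotient. The paper instead treats each degree by a separate device: $H^0$ is just $V^{G_K}$ on both sides; $H^1$ is identified with $\mathrm{Ext}^1$ in the category of \'etale $(\varphi,\Gamma)$-modules, and the three categories (over $\calE_K^\dagger$, $\calE_K$, $\r_K$) are all equivalent to $p$-adic representations; for $H^2$ one replaces the $\varphi$-complex by the $\psi$-complex (this is a quasi-isomorphism because $\gamma-1$ is invertible on $(\D^\dagger(V))^{\psi=0}$) and then proves $\D^\dagger(T)/(\psi-1)\cong\D(T)/(\psi-1)$ via the explicit $w_n$-estimates of Cherbonnier--Colmez. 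The point is that $\psi$, unlike $\varphi$, \emph{improves} overconvergence, so one can bound the radius of a $\psi$-preimage; there is no such control for $\varphi$-preimages, which is why your proposed route via $\varphi-1$ stalls on this side.
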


At this point, one may reasonably expect that the Euler-Poincar\'e
characteristic formula and Tate local duality should extend to
$(\varphi, \Gamma)$-modules over the Robba ring, using
$\r(\omega) = \D^\dagger_{\rig}(\Qp(1))$ as the dualizing object. The main goal of
this article is to prove these results.
\begin{theorem}
Let $D$ be a $(\varphi, \Gamma)$-module over the Robba ring
$\calR_K$.
\begin{enumerate}
\item[(a)] We have $H^i(D)$ are all finite dimensional $\Qp$-vector spaces and
\[
\chi(D) = \sum_{i=0}^2 (-1)^i \dim_{\Qp} H^i(D) = -[K:\Qp]\rank D.
\]
\item[(b)] For $i=0,1,2$, the composition
\[
H^i(D) \times H^{2-i}(D^\vee(\omega)) \to
H^2((D \otimes D^\vee)(\omega)) \to
H^2(\omega),
\]
in which the first map is the cup product, is a perfect pairing
into $H^2(\omega) \cong \Qp$.
\end{enumerate}
\end{theorem}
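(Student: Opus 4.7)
The plan is to establish both parts by a devissage argument, reducing general $(\varphi,\Gamma)$-modules over $\calR_K$ to two tractable cases: \'etale modules, where Theorem 1 lets us import Tate's classical Euler-Poincar\'e formula and local duality for $p$-adic representations, and rank-one modules $\r(\delta)$, where the classification by continuous characters $\delta\colon K^\times \to E^\times$ permits explicit computation of $H^0$, $H^1$, and $H^2$. The key structural input is Kedlaya's slope filtration theorem: every $D$ admits a canonical filtration by pure isoclinic $(\varphi,\Gamma)$-submodules, and an isoclinic module of slope $s$ becomes \'etale upon tensoring with an appropriate rank-one module of slope $-s$.

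Before attacking (a) and (b), one must establish that $H^i(D)$ is finite-dimensional for every $D$. This is straightforward for \'etale modules (via Theorem 1) and for rank-one modules (via direct analysis of the Herr-Colmez complex); the general case follows by using Kedlaya's theorem to reduce to the isoclinic case, then comparing the Herr-Colmez complexes of an isoclinic $M$ and its \'etale twist $M \otimes L$ to transport finiteness estimates from the latter to the former. Once finiteness is in place, additivity of $\chi$ in short exact sequences is immediate from the long exact sequence in cohomology, so the proof of (a) reduces to verifying the formula $\chi(D) = -[K\colon\Qp] \rank D$ on the pieces of the slope filtration. The \'etale case follows from Theorem 1 combined with Tate's Euler-Poincar\'e formula, and a case analysis on $\delta$ gives $\chi(\r(\delta)) = -[K\colon\Qp]$ for rank one. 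The remaining isoclinic case is the principal obstacle: one natural approach is to refine the slope filtration by passing to a finite extension $K'/K$ (under which both sides of the formula scale correctly by $[K'\colon K]$) to obtain a filtration with rank-one subquotients, though the existence of such a refinement requires justification.

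For (b), one first verifies that the cup product on the Herr-Colmez complex coincides with the Galois cup product under the isomorphism of Theorem 1, so that the \'etale case of (b) reduces to Tate local duality applied to $V$ and $V^\vee(1)$. The rank-one case is checked by explicit computation, pairing $H^i(\r(\delta))$ against $H^{2-i}(\r(\omega\delta^{-1}))$. For general $D$, a devissage applies: a short exact sequence $0 \to D_1 \to D \to D_2 \to 0$ induces a dual sequence $0 \to D_2^\vee(\omega) \to D^\vee(\omega) \to D_1^\vee(\omega) \to 0$, and the cup product pairing produces a map between the two associated long exact sequences; a five-lemma argument then transfers perfectness from $D_1, D_2$ to $D$. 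Combined with Kedlaya's slope filtration and the special cases above, this yields (b) for all $D$. The main difficulty throughout is handling non-\'etale isoclinic modules, since Kedlaya's theorem delivers only isoclinic pieces rather than rank-one pieces, so some additional structural argument is needed at that intermediate level.
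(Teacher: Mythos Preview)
Your overall devissage framework (slope filtration, five-lemma, \'etale base case via Theorem~0.1) is sound, but the proposal has a genuine gap precisely where you flag it: the treatment of non-\'etale isoclinic pieces.

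First, the sentence ``an isoclinic module of slope $s$ becomes \'etale upon tensoring with an appropriate rank-one module of slope $-s$'' is false as stated. A rank-one $(\varphi,\Gamma)$-module has integer slope, so this twist is only available when $s\in\ZZ$; for a pure $D$ of slope $c/d$ with $d>1$ there is no such $L$. Your fallback of passing to a finite extension $K'/K$ to obtain a rank-one filtration does not work either: a pure $\varphi$-module over $\r_{K'}$ need not acquire rank-one subquotients, since Kedlaya's theory gives no splitting of isoclinic modules over extensions. So the reduction to \'etale and rank-one cases, as you have outlined it, does not go through.

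The paper's substitute for this missing step is twofold. For (a), one constructs by hand (Lemma~4.2) a pure $(\varphi,\Gamma)$-module $E$ of rank $d$ and slope $1/d$ which is simultaneously a successive extension of the rank-one modules $\r(x^i)$ with $i\in\{0,1\}$; tensor powers and twists of $E$ then furnish, for any pure $D$ of slope $c/d$, a pure $F$ of slope $-c/d$ built from such $\r(x^i)$'s. Now $D\otimes F$ is \'etale \emph{and} is a successive extension of twists $D(x^i)$, so one needs to know that $\chi(D)=\chi(D(x^i))$. This is the role of the torsion theory (Section~3): identifying $D(x)\cong tD$, the quotient $D/tD$ is a torsion $(\varphi,\Gamma)$-module, and Theorem~3.7 shows such objects have $\chi=0$ and $H^2=0$. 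This ingredient is entirely absent from your proposal and is what makes both the finiteness and the formula work.

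For (b), the paper does \emph{not} reduce to rank-one modules at all. Instead it runs an induction on $\deg(D)\ge 0$ for pure $D$: using the already-proved Euler--Poincar\'e formula to guarantee $H^1(D(|x|^{-1}))\neq 0$, one picks a nonsplit extension $0\to D\to E\to \r(|x|)\to 0$ and checks that every slope-filtration subquotient of $E$ has degree strictly smaller than $\deg(D)$. Duality for $\r(|x|)$ is verified separately (Lemma~4.5) by embedding it, together with $\r(x)$, in a rank-two \'etale module. Your proposed rank-one explicit computation is carried out in the paper only for $K=\Qp$, $p>2$, and is not used in the proof of the main theorems.
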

Our method of proof is to reduce to the known case of an \'etale
$(\varphi, \Gamma)$-module, where by Theorem 0.1 we can invoke the
standard form of the theorem. In doing so, we construct a bigger
category, the category of generalized $\m$-modules, which allows us
to consider the cohomology of the quotient of two $\m$-modules.
Moreover, in case $K=\Q$ and $p>2$, we provide an explicit
calculation of $H^2$ of rank $1$ $\m$-modules as a complement to
Colmez's calculation on $H^0$ and $H^1$ in \cite{C05}.

The author should mention that in a different direction, Seunghwan
Chang has obtained some interesting results concerning extensions of
$\m$-modules in his thesis \cite{Ch07}.

\section{Preliminaries}
\subsection{$p$-adic Hodge theory and $\m$-modules} This section is a brief
summary of some basic constructions of $p$-adic
Hodge theory and $\m$-modules. The results recalled here can be found in
\cite{FW79}, \cite{F82}, \cite{Wi83}, \cite{Fo91}, \cite{CC98}, and \cite{LB02}.

Let $p$ be a prime number,
and fix a finite extension $K$ of $\bbQ_p$.
Write $G_K = \Gal(\overline{K}/K)$.
A \emph{$p$-adic representation} $V$ is a finite dimensional $\Qp$-vector space with a
continuous linear action of $G_{K}$. The dimension of this representation is defined as the dimension of $V$ as a $\Qp$-vector space and is usually denoted by $d$. \

Let $k$ be the residue field of $K$, $W(k)$ be the ring of Witt vectors with
coefficients in $k$, and $K_{0}=W(k)[1/p]$ be the maximal unramified subfield
of $K$. Let $\mu_{p^{n}}$ denote the group of $p^{n}$-th roots of unity. For
every $n$, we choose a generator $\eps^{(n)}$ of $\mu_{p^{n}}$, with the
requirement that  $(\eps^{(n+1)})^{p}=\eps^{(n)}$. That makes
$\eps=\limproj_{n}\eps^{(n)}$ a generator of
$\limproj_{n}\mu_{p^{n}}\simeq \Zp(1)$. We set $K_{n}=K(\mu_{p^{n}})$ and
$K_{\infty}=\bigcup_{n=1}^{\infty}K_{n}$. The cyclotomic character $\chi:
G_{K}\rightarrow \Zp^{\times}$ is defined by $g(\eps^{(n)})=(\eps^{(n)})^{\chi(g)}$
for all $n\in\mathbb{N}$ and $g\in G_{K}$. The kernel of $\chi$ is $H_{K}=\Gal(\overline{\Qp}/K_{\infty})$, and $\chi$ identifies
$\Gamma=\Gamma_{K}=G_{K}/H_{K}$ with an open subgroup of $\Zp^{\times}$.

Let $\textbf{C}_{p}$ denote the $p$-adic complex numbers, i.e. the completion of
$\overline{\Qp}$ for the $p$-adic topology, and set
\begin{center}
 $\et=\displaystyle{\limproj_{x\mapsto x^{p}}}\Cp =\{(x^{(0)},x^{(1)},...)\ |\
(x^{(i+1)})^{p}=x^{(i)}\}$.
\end{center}
A ring structure on $\et$ is given by the
following formulas:
If $x=(x^{(i)})$ and $y=(y^{(i)})$, then their sum $x+y$ and product $xy$
are given by
\begin{center}
 $(x+y)^{(i)}=\displaystyle{\lim_{j\rightarrow \infty }}(x^{(i+j)}+y^{(i+j)})^{p^{j}}$ and
$(xy)^{(i)}=x^{(i)}y^{(i)}$.
\end{center}
If $x=(x^{(i)})$, we define $v_{\et}(x)=v_{p}(x^{(0)})$.
This is a valuation on $\et$ and the corresponding topology coincides with the
projective limit topology; as a consequence, $\et$ is a complete valuation ring
with respect to $v_{\et}$. Furthermore, the induced $G_{\Qp}$-action on $\et$
preserves this valuation. Let $\etplus$ be the ring of integers of this
valuation; i.e. $\etplus$ is the set of $x\in \et$ such that $x^{(0)}\in
\OO_{\Cp}$. From the construction of $\eps$, we can naturally view it as an element of $\et^+$.
Set $\ee_{K_0}=k((\eps-1))$, $\ee$ the separable closure of $\ee_{K_0}$ in $\et$ and $\ee_K=\ee^{H_K}$.
If $K_0'$ denotes the maximal unramified extension of $K_0$ in $K_\infty$ and $k'$ is its residue field,
then the discrete valuation ring $\ee^+_K$ is just $k'[[\overline{\pi}_K]]$ where $\overline{\pi}_K$ is a uniformizer (\cite{FW79}, \cite{Wi83}).

Let $\atplus$ (resp. $\at$) be the ring $W(\etplus)$ (resp. $W(\et)$) of Witt vectors with coefficients in $\etplus$ (resp. $\et$), and
$\btplus=\atplus[1/p]$ (resp. $\bt=\at[1/p]$). Set
$\pi=[\eps]-1$, and $q=\varphi(\pi)/\pi$. Since $\etplus$ is perfect, we have
\begin{center}
 $\atplus=\{x=\sum_{k=0}^{\infty}p^{k}[x_{k}]\ | \ x_{k}\in \etplus\}$,
\end{center}
where $[x_{k}]$ is the Teichm\"{u}ller lift of $x_{k}$ in $\atplus$.
This gives a bijection $\atplus\rightarrow(\etplus)^{\mathbb{N}}$
which sends $x$ to $(x_{0},x_{1},...)$. Let $\atplus$ be endowed
with the topology induced from the product topology of the right
hand side. Another way to get this topology is to define
$([\overline{\pi}]^{k},p^{n})$ as a basis of neighborhoods of $0$.
The topology of $\at$ is defined in the same way. The absolute
Frobenius $\varphi$ of $\et$ lifts by functoriality of Witt vectors
to the Frobenius operator $\varphi$ of $\at$ which commutes with the
Galois action. It is easy to see that
\begin{center}
$\varphi(\sum_{k=0}^{\infty}p^{k}[x_{k}])=\sum_{k=0}^{\infty}p^{k}[x_{k}^{p}]$
\end{center}
and therefore $\varphi$ is an isomorphism. Now let $\aa_{K_0}$ be
the completion of $\OO_{K_0}[\pi,\pi^{-1}]$ in $\at$ for the
topology given above. It is also the completion of
$\OO_{K_0}[[\pi]][\pi^{-1}]$ for the $p$-adic topology. This is a
Cohen ring with residue field $\ee_{K_0}$. Let $\bb$ be the
completion for the $p$-adic topology of the maximal unramified
extension of $\bb_{K_0}=\aa_{K_0}[1/p]$. We then define
$\aa=\at\cap\bb$ and $\aplus=\atplus\cap\bb$. Note that these rings
are endowed with the induced $G_{\Q}$ and Frobenius actions from
$\bt$. For $S$ any one of these rings, define $S_{K}=S^{H_{K}}$.
Therefore $\bb_K=\aa_K[1/p]$ and $\bplus_K=\aplus_K[1/p]$. When
$K=K_0$, this definition of $\aa_{K_0}$ coincides with the one given
above.

We define $\D(V)=(\bb\otimes V)^{H_{K}}$. It is a $d$-dimensional $\bb_{K}$-vector space with
Frobenius $\varphi$ and $\Gamma$-action. Similarly, if $T$ is a lattice of $V$, we define
$\D(T)=(\aa\otimes T)^{H_{K}}$, which is a free $\aa_{K}$-module of rank $d$.
We say a $\m$-module $D$ over $\bb_K$ is \emph{\'etale} if there is a free $\aa_K$-submodule $T$ of $D$,
which is stable under $\varphi$ and $\Gamma$ actions, such that $T\otimes_{\aa_K}\bb_{K}=D$.
Then $\D(V)$ is an \'etale $\m$-module since $D(T)$ is such an $\aa_K$-lattice.
The following result is due to Fontaine \cite{Fo91}.
\begin{theorem}
The functor $V\mapsto\D(V)$ is an equivalence from the category of $p$-adic
representations of $G_{K}$ to the category of \'etale
$(\varphi,\Gamma)$-modules over $\bb_{K}$; the inverse functor is
$D\mapsto(\bb\otimes D)^{\varphi=1}$.
\end{theorem}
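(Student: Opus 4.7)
The plan is a two-step Galois descent. First, descend from $H_K$ to move between semilinear $(\varphi, H_K)$-modules over $\bb$ and $(\varphi, \Gamma)$-modules over $\bb_K$, where $\Gamma = G_K/H_K$. Second, descend from $\varphi$ to move between étale $\varphi$-modules over $\bb$ and finite-dimensional $\Qp$-vector spaces, under which an $H_K$-action on a $\bb$-module becomes a continuous linear $H_K$-action on the $\Qp$-space of $\varphi$-invariants. Composing the two descents recovers continuous $G_K$-representations on finite-dimensional $\Qp$-spaces, which is the source category we want.

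For the first step I would prove the Hilbert-90-type statement that $\bb^{H_K} = \bb_K$ and, more strongly, that the functor $W \mapsto \bb \otimes_{\bb_K} W$ from finite-dimensional $\bb_K$-vector spaces to finite-free semilinear $\bb$-representations of $H_K$ is an equivalence, with quasi-inverse $M \mapsto M^{H_K}$. This is formal Galois descent for the extension $\bb/\bb_K$, reducing to the vanishing of non-abelian continuous $H^1(H_K, GL_d(\bb))$; one proves this by passing to the residue characteristic situation (the extension $\ee/\ee_K$) and lifting $p$-adically.

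For the second step, which is the heart of the matter, I would show that for any étale $\varphi$-module $M$ over $\bb$ of rank $d$, the $\Qp$-vector space $M^{\varphi=1}$ has dimension $d$ and the natural map
\[
\bb \otimes_{\Qp} M^{\varphi=1} \longrightarrow M
\]
is an isomorphism. Choosing a $\varphi$-stable $\aa$-lattice $M_0$ of $M$ (available by the étale hypothesis), one reduces via $p$-adic completeness of $\aa$ and a standard lifting argument modulo powers of $p$ to the analogous assertion over $\ee = \aa/p\aa$: for an étale $\varphi$-module $\overline{M}$ of rank $d$ over $\ee$, the space $\overline{M}^{\varphi=1}$ is $\F$-free of rank $d$ and $\ee \otimes_{\F} \overline{M}^{\varphi=1} \to \overline{M}$ is an isomorphism. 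In a basis where $\varphi$ acts by a matrix $A \in GL_d(\ee)$, finding $\varphi$-invariants amounts to solving a system of Artin-Schreier equations in $\ee^d$, which has the full $p^d$ solutions because $\ee$ was constructed precisely as the separable closure of $\ee_{K_0}$ inside $\et$.

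The main obstacle is this residue-field Artin-Schreier computation; once that is done, the successive-approximation lift to $\aa$, inversion of $p$ to reach $\bb$, and the Hilbert 90 descent to $\bb_K$ are routine. The $\Gamma$-action on $\D(V)$ is inherited from the $G_K$-action on $\bb \otimes V$ after taking $H_K$-invariants, and the $G_K$-action on $(\bb \otimes D)^{\varphi=1}$ comes from the $G_K$-action on $\bb$ combined with the $\Gamma$-action on $D$; functoriality of $\D$ and its inverse, and the fact that they are mutually quasi-inverse, then follow by tracing the natural isomorphisms supplied by the two descent equivalences.
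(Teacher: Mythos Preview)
The paper does not give its own proof of this statement: it is stated as Fontaine's theorem and attributed to \cite{Fo91} without argument. There is therefore no proof in the paper to compare against. Your outline is the standard proof (Hilbert~90 over $\ee/\ee_K$ lifted $p$-adically to $\bb/\bb_K$, combined with the fact that every \'etale $\varphi$-module over the separably closed field $\ee$ is trivial, again lifted $p$-adically), and it is correct in substance; the only mildly loose phrase is calling the $\varphi$-invariant equation over $\ee$ a ``system of Artin--Schreier equations,'' but the underlying separability argument (or Lang's theorem for $GL_d$) is exactly what is needed.
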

 We define the ring of overconvergent elements as follows:
\begin{center}
 $\btdag{,r}=\displaystyle{\{x=\sum_{k>-\infty}^{+\infty}}p^{k}[x_{k}]\in \bt,
\displaystyle{\lim_{k\rightarrow+\infty} v_{\et}(x_{k})+kpr/(p-1)=+\infty \}}$
\end{center}
and $\btdag{}=\cup_{r\geqslant 0}\btdag{,r}$, $\bdag{,r}=(\btdag{,r})\cap\bb$,
$\bdag{}=\cup_{r\geqslant 0}\bdag{,r}$. Note that $\varphi:\btdag{,r}\rightarrow \btdag{,pr}$ is an isomorphism.
Let $\atdag{,r}$ be the set of elements of $\btdag{,r}\cap \at$ such that $v_{\et}(x_{k})+kpr/(p-1)\geq 0$
for every $k$ and similarly $\atdag{}=\cup_{r\geqslant 0}\atdag{,r}$, $\adag{,r}=\atdag{,r}\cap\aa$,
$\adag{}=\cup_{r\geqslant 0}\adag{,r}$.

Define $\D^{\dagger,r}(V)=(\bdag{,r}\otimes V)^{H_{K}}$ and
$\D^{\dagger}(V)=\cup_{r\geqslant 0} \D^{\dagger,r}(V)=(\bdag{}\otimes V)^{H_K}$. Similarly, if $T$ is a lattice of $V$,
we define $\D^\dagger(T)=(\adag{}\otimes T)^{H_{K}}$. We say a $\m$-module $D$ over $\bdag{}_K$ is \emph{\'etale}
if there is a free $\adag{}_K$-submodule $T$ of $D$, which is stable under $\varphi$ and $\Gamma$ actions,
such that $T\otimes_{\adag{}_K}\bdag{}_{K}=D$. In \cite{CC98}, Cherbonnier and Colmez proved the following result.
\begin{theorem}
There exists an $r(V)$ such that
$\D(V)=\bb_{K}\otimes_{\bdag{,r}_{K}}\D^{\dagger,r}(V)$ if $r\geqslant r(V) $.
Equivalently, $\D^{\dagger}(V)$ is a $d$-dimensional \'etale $\m$-module over
$\bdag{}_{K}$. Therefore, $V\mapsto\D^\dagger(V)$ is an equivalence from the category of $p$-adic
representations of $G_{K}$ to the category of \'etale
$(\varphi,\Gamma)$-modules over $\bdag{}_{K}$.
\end{theorem}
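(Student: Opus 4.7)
The plan is to show that for any $G_K$-stable $\Zp$-lattice $T\subset V$ there is an $r>0$ and a free $\adag{,r}_K$-submodule $M\subset \D(T)$ of rank $d$, stable under $\varphi$ and $\Gamma$, with $M\otimes_{\adag{,r}_K}\aa_K = \D(T)$. This is the content of the first assertion once one inverts $p$, and the equivalence-of-categories conclusion then follows formally by combining with Fontaine's Theorem 1.2: full faithfulness reduces to the analogous statement over $\bb_K$ after extension of scalars, and essential surjectivity onto \emph{\'etale} $\m$-modules over $\bdag{}_K$ is built into the definition.

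The strategy I would pursue is the Tate--Sen descent formalism. First pick $n$ so large that a topological generator $\gamma_n$ of $\Gamma_n = \Gal(K_\infty/K_n)$ acts on $\D(T)/p\D(T)$ arbitrarily close to the identity; continuity of the $\Gamma$-action makes this possible. The decisive technical input is then a family of ``normalized trace'' projectors $R_n : \at_K \to \adag{,r_n}_K$ (more precisely, idempotent-like projectors onto an overconvergent subring with a complement one can estimate) enjoying two quantitative properties: first, $R_n$ nearly commutes with the $\Gamma$-action, with a controlled commutator; second, on the complement of the overconvergent part, $\gamma_n - 1$ is invertible with an explicitly bounded inverse. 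Together these make the pair $(\at_K,\Gamma_n)$ behave, relative to $\adag{}_K$, as an approximately split extension.

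With these tools in place, the overconvergent basis is produced by a Picard-style contraction. Starting from any $\aa_K$-basis of $\D(T)$, decompose each basis vector into its overconvergent part $R_n$ and a residual part; then use the bounded inverse of $\gamma_n - 1$ on the residual to construct a correction that pushes the basis closer to being genuinely overconvergent. The Tate--Sen estimates guarantee that successive corrections shrink geometrically, so the iteration converges to a $\Gamma_n$-stable overconvergent basis. A finite averaging over coset representatives of $\Gamma/\Gamma_n$ promotes this to a $\Gamma$-stable one. Compatibility with $\varphi$ is automatic since $\varphi(\bdag{,r})\subset \bdag{,pr}$, so enlarging $r$ if needed takes care of the Frobenius structure.

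The main obstacle is unquestionably the construction of the normalized trace projectors with the correct Tate--Sen estimates. This requires a fine analysis of $\at$ in terms of its overconvergent subring, worked out explicitly with Witt-vector expansions and the defining valuation $v_{\et}(x_k) + kpr/(p-1)$ for $\adag{,r}$; one must precisely track ramification in $K_\infty/K$ and the behavior of the renormalized traces $\on{Tr}_{K_m/K_n}/[K_m:K_n]$ as $m\to\infty$ up the cyclotomic tower. Once these quantitative estimates are established, the remainder of the argument is a relatively formal fixed-point computation.
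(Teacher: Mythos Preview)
The paper does not actually prove this statement: it is Theorem~1.2 in the preliminaries section, attributed to Cherbonnier--Colmez \cite{CC98} and simply quoted without argument. So there is no ``paper's own proof'' to compare against.

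That said, your sketch is a faithful outline of the Cherbonnier--Colmez strategy (in the later Tate--Sen axiomatization due to Berger and Colmez). A couple of small points of imprecision: the normalized traces $R_n$ are not projectors $\at_K\to\adag{,r_n}_K$ as you wrote, but rather maps defined on $\bt_K$ (or $\at_K$) landing in $\varphi^{-n}(\bdag{}_K)$, built as limits of renormalized field traces down the cyclotomic tower; the overconvergent target only emerges after applying $\varphi^n$. Also, the passage from a $\Gamma_n$-stable basis to a $\Gamma$-stable one is not literally an averaging (the obstruction is that $\adag{,r}_K$ is not $p$-adically complete), but rather an observation that the $\adag{,r}_K$-span of the $\Gamma/\Gamma_n$-translates of your basis is already a finite free module of the correct rank. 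Finally, essential surjectivity is not quite ``built into the definition'': given an \'etale $D$ over $\bdag{}_K$ one extends to $\bb_K$, invokes Fontaine to get $V$, and must then check $\D^\dagger(V)=D$ inside $\D(V)$, which amounts to identifying $D$ with the bounded vectors in $D\otimes_{\bdag{}_K}\bb_K$; this is not hard but does require an argument. None of these affect the overall correctness of your plan.
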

We can take $\pi_K$ to be an element of $\adag{}_K$ whose image modulo $p$ is $\overline{\pi}_K$.
Let $e_K$ denote the ramification index of $K_{\infty}/(K_0)_{\infty}$. Then for $r\gg 0$, one can show that
$\bdag{,r}_{K}$ is given by
\begin{center}
$\bdag{,r}_{K}=\{f(\pi_K)=\displaystyle{\sum^{+\infty}_{k=-\infty}}a_k\pi_K^k,$ where $a_k\in K_0^{'}$
and $f(T)$ is convergent and bounded on $p^{-1/e_Kr}\leq |T|< 1 \}$.
\end{center}
We see that the sup norms on closed annuli give a family of norms on $\bdag{,r}_K$. Its Fr\'echet completion with respect to these norms is
\begin{center}
$\btrig{,r}{,K}=\{f(\pi_K)=\displaystyle{\sum^{+\infty}_{k=-\infty}}a_k\pi_K^k,$ where $a_k\in K_0^{'}$
and $f(T)$ is convergent on $p^{-1/e_Kr}\leq |T|< 1 \}$.

\end{center}
Then the union $\btrig{}{,K}=\cup_{r\geq 0}\btrig{,r}{,K}$ can be
identified with the \emph{Robba ring} $\r_{K}$ from the theory of $p$-adic
differential equations, which is the set of holomorphic functions on
the boundary of the open unit disk, by mapping $\pi_K$ to $T$. And
$\bdag{}_K=\calE_K^{\dagger}$ is the subring of $\r_K$ consisting of
bounded functions. The $p$-adic completion of $\calE_K^{\dagger}$ is
$\calE_K=\bb_K$. When $K=K_0$, we can choose
$\bar{\pi}_{K}=\bar{\pi}=\varepsilon-1$ and
$\pi_{K}=\pi=[\varepsilon]-1$.

For any $K$, there exists an $r(K)$ such that for any $r\geq r(K)$
and $n\geq n(r)=(\log(r/(p-1))/\log p)+1$, we have an injective
morphism $\iota_n$ from $\btrig{,r}{,K}$ to $K_n[[t]]$ which
satisfies $\iota_{n}=\iota_{n+1}\circ \varphi$ (see Chapter 2 of
\cite{LB02} for the construction). For example, when $K=K_0$,
$\iota_n$ is defined by $\iota_n(\pi)=\eps^{(n)}e^{t/p^{n}}-1$.

Define the operator $\nabla=\log(\gamma)/\log(\chi(\gamma))$ which gives an action of Lie$(\Gamma_K)$ on $\r_K$.
Let $t=\log([\varepsilon])$ and set the differential operator $\partial=\nabla/t$ which satisfies:
\begin{center}
$\partial\circ\varphi=p\varphi\circ\partial$ and $\partial\circ\gamma=\chi(\gamma)\gamma\circ\partial$.
\end{center}
In case $K=\Qp$, we choose $\bar{\pi}_{K}=\bar{\pi}=\varepsilon-1$ and $\pi_{K}=\pi=[\varepsilon]-1$.
Then we have $\nabla(f(\pi))=\log(1+\pi)(1+\pi)df/d\pi$ and $\partial=(1+\pi)df/d\pi$.

Define
$\D^{\dagger,r}_{\rig}(V)=\D^{\dagger,r}(V)\otimes_{\bdag{,r}_{K}}\btrig{,r}{,K}
$ and $\D^\dagger_\rig(V)=\cup_{r\geqslant0}
\D^{\dagger,r}_{\rig}(V)
=\D^\dagger(V)\otimes_{\bdag{}_K}\btrig{}{,K}$ which is an \'etale
$\m$-module over $\btrig{}{,K}$. Here we say a $\m$-module $D$ over
$\btrig{}{,K}$ is \emph{\'etale} if $D$ has a $\bdag{}_K$-submodule
$D'$, which is an \'etale $\m$-module over $\bdag{}_K$ under the
restricted $\varphi$ and $\Gamma$ actions, such that
$D=D'\otimes_{\bdag{}_K}\btrig{}{,K}$. The following theorem is due
to Kedlaya (\cite{Ke06}).
\begin{theorem}
The correspondence $D\mapsto \btrig{}{,K}\otimes_{\bdag{}_K}D$ is an equivalence between the category of \'etale $\m$-modules over $\bdag{}_K$ and the category of \'etale $\m$-modules over $\btrig{}{,K}$. As a consequence, $V\mapsto\D^{\dagger}_{\rm rig}(V)$ is an equivalence of
categories from the category of $p$-adic representations of $G_{K}$ to the category of \'etale
$(\varphi,\Gamma)$-modules over $\btrig{}{,K}$.
\end{theorem}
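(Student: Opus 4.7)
The plan is to reduce the statement to Kedlaya's slope filtration theorem (invoked as a black box from [Ke06]). First, the second assertion is a formal consequence of the first combined with Theorem 1.3: composing the Cherbonnier--Colmez equivalence $V \mapsto \D^\dagger(V)$ with the asserted base-change equivalence yields $V \mapsto \D^\dagger_{\rig}(V)$. So I focus on proving that
\[
D' \longmapsto D' \otimes_{\bdag{}_K} \btrig{}{,K}
\]
is an equivalence from \'etale $\m$-modules over $\bdag{}_K$ to \'etale $\m$-modules over $\btrig{}{,K}$.

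Essential surjectivity is built into the definition of \'etaleness over $\btrig{}{,K}$, so the content is full faithfulness. By the standard internal-Hom reduction (the internal Hom $\mathrm{Hom}(D_1, D_2)$ of two \'etale $\m$-modules is again \'etale, and $\m$-equivariant morphisms $D_1 \to D_2$ correspond to its $\varphi$- and $\Gamma$-fixed vectors), it suffices to prove that for every \'etale $\m$-module $D$ over $\bdag{}_K$,
\[
D^{\varphi=1} = (D \otimes_{\bdag{}_K} \btrig{}{,K})^{\varphi=1};
\]
the $\Gamma$-invariance of an element is automatic once it is known to lie in $D$, since $\bdag{}_K \hookrightarrow \btrig{}{,K}$ is $\Gamma$-equivariant and injective.

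The descent of $\varphi$-invariants is where Kedlaya's theorem enters decisively. An \'etale $\m$-module over $\bdag{}_K$ arises from a $\varphi$-stable $\adag{}_K$-lattice, so its base change to $\btrig{}{,K}$ is pure of slope zero in Kedlaya's sense. Kedlaya's slope filtration theorem asserts that every $\varphi$-module over $\btrig{}{,K}$ admits a unique filtration by slopes, and moreover each pure slope-zero $\varphi$-module descends canonically and uniquely to an \'etale $\varphi$-module over $\bdag{}_K$. Applied to $D \otimes_{\bdag{}_K} \btrig{}{,K}$, the unique slope-zero descent must be $D$ itself; any $\varphi$-fixed vector of the base change therefore lies in the \'etale lattice, and hence in $D$. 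Running the same argument for internal Homs between two \'etale modules yields full faithfulness, completing the equivalence.

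The principal obstacle is Kedlaya's slope filtration theorem itself, which requires a delicate Newton polygon analysis of matrices of $\varphi$-twisted polynomials over $\btrig{}{,K}$, combined with a Dieudonn\'e--Manin type classification after extension to an algebraically closed coefficient ring. I treat it as a black box, since the purpose of Theorem 1.4 in this paper is only to transport the representation-theoretic picture from the bounded rings $\bdag{}_K$, $\bb_K$ to the Robba ring $\r_K$, so that non-\'etale $\m$-modules over $\r_K$ become available as the natural setting for the cohomological and duality results to follow.
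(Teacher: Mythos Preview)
The paper does not supply a proof of this statement: it is quoted as a theorem of Kedlaya with a bare citation to \cite{Ke06}, so there is no in-paper argument to compare against. Your outline is a correct account of how the statement is extracted from Kedlaya's work. Essential surjectivity is indeed tautological from the paper's definition of ``\'etale over $\btrig{}{,K}$'', and the reduction of full faithfulness to descent of $\varphi$-invariants via internal Hom is the standard move; that descent statement is exactly \cite[Proposition~1.5.4]{Ke06}, which the present paper itself invokes later in the proof of Proposition~2.7.

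Two small points. First, when you write ``combined with Theorem~1.3'' you mean Theorem~1.2 (the Cherbonnier--Colmez equivalence); Theorem~1.3 is the statement you are proving. Second, you conflate two distinct ingredients from \cite{Ke06}: the slope filtration theorem proper (existence and uniqueness of the Harder--Narasimhan filtration, stated here as Theorem~1.4) and the descent of pure slope-zero $\varphi$-modules from $\btrig{}{,K}$ to $\bdag{}_K$. It is the latter that does the work in your argument; the slope filtration is not actually needed once one knows the module in question is already pure of slope zero. Apart from these cosmetic issues, the proposal is sound.
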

Suppose $D$ is an arbitrary $\m$-module over $\btrig{}{,K}$ of $\rank d$. By a result of Berger \cite[Theorem I.3.3]{LB04}, for $r$ large enough, there is a unique $\btrig{,r}{,K}$ submodule $D_r$ of $D$ such that:
\begin{enumerate}
\item[(1)]$D_r$ is a free $\btrig{,r}{,K}$-module of rank $d$, stable under $\Gamma$ action, and
$D_r\otimes_{\btrig{,r}{,K}}\btrig{}{,K}=D$;
\item[(2)]We can find a $\btrig{,pr}{,K}$-basis of $D_r\otimes_{\btrig{,r}{,K}}\btrig{,pr}{,K}$ from
elements of $\varphi(D_r)$.
\end{enumerate}
If $D_r$ is defined, then for any $r'\geq r$ we have $D_{r'}=D_r\otimes_{\btrig{,r}{,K}}\btrig{,r'}{,K}$.
We set
\[
\D^{+,n}_{\dif}(D)=D_r\otimes_{\btrig{,r}{,K},\iota_n}K_n[[t]]
\]
and call it the \emph{localization at $\eps^{(n)}-1$} of $D$. It is easy to see that $\D^{+,n}_{\dif}(D)$ is well defined,
i.e. it does not depend on the choice of $r$. Let $i_n$ denote the natural inclusion map from
$K_n[[t]]$ to $K_{n+1}[[t]]$, and define the \emph{connecting map} $\varphi_n$ as
\[
\varphi\otimes i_n:\D^{+,n}_{\dif}(D)\longrightarrow\D^{+,n+1}_{\dif}(D).
\]
It is clear that
\[
\varphi_n\otimes 1:\D^{+,n}_{\dif}(D)\otimes_{K_n[[t]]}K_{n+1}[[t]]\longrightarrow\D^{+,n+1}_{\dif}(D)
\]
is an isomorphism.
\subsection{Slope theory of $\varphi$-modules}
This section is a short collection of some basic facts concerning the slope theory of
$\varphi$-modules over $\r_K$ (resp. $\calE^\dagger_K$) which we will use later. For a complete treatment of this topic, see \cite{Ke06}.

A \emph{$\varphi$-module} $M$ over $\r_K$ (resp. $\calE^\dagger_K$)
is a finitely generated free $\r_K$-module (resp.
$\calE^\dagger_K$-module) with a Frobenius action $\varphi$ that
satisfies $\varphi^*M\cong M$. We can view $M$ as a left module over
the twisted polynomial ring $\r_K\{X\}$. For a positive integer $a$,
define the \emph{a-pushforward} functor $[a]_*$ from
$\varphi$-modules to $\varphi^a$-modules to be the restriction along
the inclusion $\r_K\{X^a\}\ra \r_K\{X\}$. Define the
\emph{a-pullback} functor $[a]^*$ from $\varphi^a$-modules to
$\varphi$-modules to be the extension of scalars functor $M\ra
M\otimes_{\r_K\{X^a\}}\r_K\{X\}$. If $\rank M=n$, then $\wedge^n M$
has rank $1$ over $\r_K$. Let $v$ be a generator, then
$\varphi(v)=\lambda v$ for some $\lambda\in
\r_K^{\times}=(\calE_K^\dagger)^{\times}$. Define the \emph{degree}
of $M$ by setting $\deg(M)=w(\lambda)$, here $w$ is the $p$-adic
valuation of $\calE_K$. Note that this does not depend on the choice
of $v$. If $M$ is nonzero, define the \emph{slope} of $M$ by setting
$\mu(M)=\deg(M)/\rank(M)$. The following formal properties are
easily verified:
\begin{enumerate}
\item[(1)] If $0\rightarrow M_1\rightarrow M\rightarrow
M_2\rightarrow 0$ is exact, then $\deg(M)=\deg(M_1)+\deg(M_2)$;
\item[(2)] We have $\mu(M_1\otimes M_2)=\mu(M_1)+\mu(M_2)$;
\item[(3)] We have $\deg(M^\vee)=-\deg(M)$ and $\mu(M^\vee)=-\mu(M)$.
\end{enumerate}

We say $M$ is \emph{\'etale} if $M$ has a $\varphi$-stable
$\OO_{\calE^\dagger_K}$-submodule $M'$ such that $\varphi^*M'\cong
M'$ and $M'\otimes_{\r^{int}_K}\r_K=M$. More generally, suppose
$\mu(M)=s=c/d$, where $c,d$ are coprime integers with $d>0$. We say
$M$ is \emph{pure} if for some $\varphi$-module $N$ of rank $1$ and
degree $-c$, $([d]_*M)\otimes N$ is \'etale. We say a $\m$-module
over $\r_K$ (resp. $\calE_K^\dagger$) is \emph{pure} if the
underlying $\varphi$-module structure is pure. In the \'etale case,
this definition is consistent with the one given in the last
section. We have the following facts:
\begin{enumerate}
\item[(1)] A $\varphi$-module is pure of slope $0$ if and only if it is
\'etale;
\item[(2)] The dual of a pure $\varphi$-module of slope $s$ is itself
pure of slope $-s$;
\item[(3)] If $M_1$, $M_2$ are pure of slopes $s_1$, $s_2$, then
$M_1\otimes M_2$ is pure of slope $s_1+s_2$.
\end{enumerate}

We say $M$ is \emph{semistable} if for every nontrivial
$\varphi$-submodule $N$, we have $\mu(N)\geq \mu(M)$. A difficult
result is that $M$ is semistable if and only if it is pure \cite[Theorem 2.1.8]{Ke06}. As a consequence of this result, we have the following slope filtrations theorem.
\begin{theorem}(Kedlaya)
Every $\varphi$-module $M$ over $\r_K$ admits a unique filtration
$0=M_0\subset M_1\subset...\subset M_l=M$ by saturated
$\varphi$-submodules whose successive quotients are pure with
$\mu(M_1/M_0)<...<\mu(M_{l-1}/M_l)$. As a consequence, if $M$ is a
$\m$-module, then these $M_i$'s are all $\m$-submodules.
\end{theorem}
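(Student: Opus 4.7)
The plan is to prove Theorem 1.5 as a Harder--Narasimhan type result, with Kedlaya's equivalence \emph{semistable $\Longleftrightarrow$ pure} as the key input. Existence will proceed by induction on $\rank M$; uniqueness will follow from a standard slope-comparison; and $\Gamma$-stability will drop out formally from uniqueness.

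For existence, I would take $M_1$ to be the \emph{maximal saturated $\varphi$-submodule of smallest slope} in $M$. To make sense of this I would first check that every finitely generated $\varphi$-submodule $N \subseteq M$ admits a saturation in $M$ that is again a $\varphi$-submodule, that saturation does not decrease the slope, and that, restricted to saturated sub-$\varphi$-modules of rank $\le \rank M$, the slope function attains its infimum $s_1$. Granting this, suppose $N, N'$ are saturated $\varphi$-submodules of slope $s_1$. Their intersection is saturated and (if nonzero) has slope $\ge s_1$ by minimality, while the exact sequence
\[
0 \to N \cap N' \to N \oplus N' \to N + N' \to 0,
\]
combined with additivity of degree, forces $\mu(N+N') \le s_1$; minimality then forces equality. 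So slope-$s_1$ saturated subobjects are closed under sums and admit a unique maximal member $M_1$. By minimality every $\varphi$-submodule of $M_1$ has slope $\ge \mu(M_1)$, so $M_1$ is semistable, hence pure by the semistable $=$ pure criterion cited from \cite{Ke06}.

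Saturation makes $M/M_1$ a $\varphi$-module over $\r_K$ of strictly smaller rank, so induction yields a filtration $0 \subset \overline{M}_2 \subset \cdots \subset \overline{M}_l = M/M_1$ with pure quotients of strictly increasing slopes. Pulling back gives $M_1 \subset M_2 \subset \cdots \subset M_l = M$, and the maximality of $M_1$ among slope-$s_1$ subobjects forces $\mu(M_2/M_1) > s_1$. For uniqueness I would use the standard observation that a nonzero morphism between pure $\varphi$-modules of slopes $s, s'$ requires $s \le s'$ (else the image would contradict semistability of the target); comparing two slope filtrations step by step, this identifies the filtrations. Finally, for any $\gamma \in \Gamma$, the translated filtration $\gamma(M_\bullet)$ again consists of $\varphi$-submodules (since $\gamma$ commutes with $\varphi$) with the same ranks, degrees, and purity properties, so $\gamma(M_i) = M_i$ by uniqueness.

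The main obstacle I expect is the foundational work underlying the existence step: constructing saturations of $\varphi$-submodules of $M$ over the non-Noetherian Bezout ring $\r_K$, verifying they remain $\varphi$-stable, and proving that the infimum of slopes over saturated sub-$\varphi$-modules is actually attained. These are technical results about $\varphi$-modules over the Robba ring that are proved in Kedlaya's paper \cite{Ke06} and on which the theorem crucially rests; once they are in hand, the Harder--Narasimhan formalism runs in the usual way.
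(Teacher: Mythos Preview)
The paper does not actually prove this theorem: it is stated in Section~1.2 as a result of Kedlaya, with attribution in the name and a pointer to \cite{Ke06} for the full treatment, and no proof is given. So there is no ``paper's own proof'' to compare against. Your Harder--Narasimhan outline is indeed the shape of Kedlaya's argument, with the hard input being exactly the equivalence \emph{semistable $\Leftrightarrow$ pure} that the paper quotes just before the theorem.

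Two small points on the sketch itself. First, the direction you want for saturation is that it does not \emph{increase} the slope (same rank, possibly smaller degree), so that a submodule realizing the minimal slope can be replaced by its saturation without losing minimality; as written you have the inequality the wrong way. Second, the sum $N+N'$ of two saturated submodules need not be saturated, so ``slope-$s_1$ saturated subobjects are closed under sums'' is not literally what your exact-sequence computation shows; you must saturate $N+N'$ and use the previous point to see the slope stays $\le s_1$, hence $=s_1$. You correctly flag in your last paragraph that these foundational facts about saturation and attainment of the slope infimum over $\r_K$ are the real work and are supplied by \cite{Ke06}; once granted, the rest of your argument (induction on rank, the morphism-slope inequality for uniqueness, and $\Gamma$-stability from uniqueness plus $\varphi\gamma=\gamma\varphi$) is the standard and correct route.
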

\section{Cohomology of $\m$-modules}

\subsection{Construction of cohomology}
Suppose $D$ is a $\m$-module over $\calE_K^\dagger$, $\calE_K$, or $\r_K$. Let $\triangle_K$ be a torsion subgroup of $\Gamma_K$. Since $\Gamma_{K}$ is an open subgroup of $\Z^{\times}$,
$\triangle_K$ is a finite group of order dividing $p-1$ (or 2 if
$p=2$). Let $p_{\triangle}$ be the idempotent operator defined by
$p_{\triangle}=(1/|\triangle_K|)\sum_{\delta\in\triangle_K}\delta$.
Then $p_{\triangle}$ is the projection from $D$ to
$D'=D^{\triangle_K}$. In case $\Gamma_K/\triangle_K$ is procyclic, we set the following
complex, where $\gamma$ denotes a topological generator of $\Gamma_K$:
 \[
  C^{\bullet}_{\varphi,\gamma}(D):
 0\stackrel{}{\longrightarrow}D'\stackrel{d_{1}}{\longrightarrow}D'\oplus D'
 \stackrel{d_{2}}{\longrightarrow}D'\stackrel{}{\longrightarrow}0,
\]
with $d_1(x) = ((\gamma - 1)x, (\varphi - 1)x)$ and $d_2(x) =
((\varphi - 1)x - (\gamma - 1)y)$. Let $H^{\bullet}(D)$ denote
cohomology groups of this complex. We need to check $H^{\bullet}(D)$
is well defined, i.e. it does not depend on the choice of
$\triangle_K$. In the following, we assume $D$ is over
$\calE_K^\dagger$. The argument also works for $\m$-modules over
$\calE_K$ and $\r_K$.

First, it is obvious that $H^0(D)=D^{\Gamma=1, \varphi=1}$. For
$H^1$, we claim $H^1(D)$ classifies all the extensions of
$\calE_K^\dagger$ by $D$ in the category of $\m$-modules over
$\calE^\dagger_K$. In fact, if $D$ is a $\m$-module over
$\calE^\dagger_K$ and $D_1$ is a such extension, we get the
following commutative diagram:
\[
\xymatrix{0 \ar[r] &
D\ar^{p_{\triangle}}[d] \ar[r] & D_1
\ar^{p_{\triangle}}[d] \ar[r]
&\calE^\dagger_K \ar^{p_\triangle}[d] \ar[r] & 0 \\
0\ar[r] & D'
\ar[r] & (D_1)' \ar[r] & (\calE^\dagger_K)'
\ar[r] & 0 .}
\]
Since $| \triangle_K|$ divides $p-1$ (or $2$ if $p$=2), all the characters of $\triangle_K$
take values in $\Qp\subset\calE^\dagger_K$. Then by standard representation theory, we have the eigenspace
decomposition $D=\oplus_{\chi}D_{\chi}$ for any $D$. Here $\chi$ ranges over all the characters of $\triangle_K$,
and $D_\chi$ is the $\chi$-eigenspace. Any nonzero element $x$ of $(\calE^\dagger_K)_\chi$ (e.g.
$\sum_{\delta\in\triangle_K}\chi(\delta^{-1})\delta (\eps)$) gives an isomorphism $D'\cong D_\chi$ by mapping $a$
to $xa$. Therefore we have $D'\otimes_{(\calE^\dagger_K)'}\calE^\dagger_K\cong D$, where the isomorphism respects $\varphi$ and $\Gamma_K$-actions. So the extensions of $\calE_K^\dagger$ by $D$ as $(\varphi, \Gamma_K)$-modules over $\calE^\dagger_K$ are in one-to-one correspondence with the extensions of $(\calE^\dagger_K)'$ by $(D_1)'$ as $(\varphi, \Gamma_K/\triangle_K)$-modules over $(\calE^\dagger_K)'$. The latter objects are clearly classified by $H^1(C^{\bullet}_{\varphi,\gamma}(D))$.

For $H^2$, suppose $\triangle'_K\supset\triangle_K$ is another torsion subgroup of $\Gamma$ and
$m=[\triangle'_K:\triangle_K]$. Then $\gamma^m$ is a topological generator of $\Gamma/\triangle'_K$
and $p_{\triangle/\triangle'}=1/m\sum_{i=0}^{m-1}\gamma^i$ is a projection from $D^{\triangle_K}$ to $D^{\triangle'_K}$.
Obviously $p_{\triangle/\triangle'}$ reduces to a projection
\[
D^{\triangle_K}/(\gamma-1)\stackrel{p_{\triangle/\triangle'}}
{\longrightarrow} D^{\triangle'_K}/(\gamma^m-1).
\]
Similarly as above, we have $D^{\triangle_K}=D^{\triangle'_K}\oplus(\oplus_{\chi\neq1}D^{\triangle_K}_{\chi})$ where
$\chi$ ranges over all the non-trivial characters of $\triangle'_K/\triangle_K$. Note that $\gamma-1$ acts bijectively
on any $D^{\triangle_K}_\chi$ with $\chi\neq1$, so the natural map
\[
D^{\triangle'_K}/(\gamma^m-1)\stackrel{i}
{\longrightarrow} D^{\triangle_K}/(\gamma-1)
\]
is surjective. We conclude that both
$i$ and $p_{\triangle/\triangle'}$ are isomorphisms. So there are canonical isomorphisms between $H^2(D)$'s
respecting different choices of torsion subgroups.

Finally, we define cup products as follows:
\[
H^0(M)\times H^{0}(N) \rightarrow H^{0}(M\otimes N) \qquad(x,y)\mapsto x\otimes y
\]
\[
 H^{0}(M)\times H^{1}(N) \rightarrow H^{1}(M\otimes N) \qquad
(x,(\bar{y},\bar{z}))\mapsto(\overline{x\otimes y}, \overline{x\otimes z})
\]
\[
 H^{0}(M)\times H^{2}(N)\rightarrow H^{2}(M\otimes N) \qquad (x,\bar{y})\mapsto \overline{x\otimes y}
\]
\[
H^{1}(M)\times H^{1}(N)\rightarrow H^{2}(M\otimes N) \qquad ((\bar{x},\bar{y}),(\bar{z},\bar{t}))
\mapsto\overline{y\otimes\gamma(z)-x\otimes\varphi(t)}
\]

\subsection{Shapiro's lemma}

If $L$ is a finite extension of $K$, and $D$ is a $\m$-module over
$\calE_{L}$  (resp. $\calE^\dagger_{L}$, $\r_{L}$). Consider
$\Ind^{\Gamma_K}_{\Gamma_L}D=\{f:\Gamma_K\ra D|f(hg)=hf(g)$ for
$h\in\Gamma_L\}$, the induced $\Gamma_K$-representation of $D$ as a
$\Gamma_L$-representation. We can endow
$\Ind^{\Gamma_K}_{\Gamma_L}D$ with an $\calE_{K}$ (resp.
$\calE^\dagger_{K}$, $\r_{K}$) module structure and a Frobenius
action $\varphi$ by defining $(af)(g)=g(a)f(g)$ and
$(\varphi(f))(g)=\varphi(f(g))$ for any element
$f:\Gamma_K\rightarrow D$ of $\Ind^{\Gamma_K}_{\Gamma_L}D$ and
$g\in\Gamma_K$. In this way $\Ind^{\Gamma_{K}}_{\Gamma_{L}}D$ is
now a $\m$-module over $\calE_{K}$  (resp. $\calE^\dagger_{K}$,
$\r_K$). We call it the \emph{induced $\m$-module of $D$} from $L$
to $K$, and denote it by $\Ind_{L}^{K}D$. Note that
$\rank\Ind^K_{L}D=[L:K]\rank D$.

One can prove that the above definition of induced $\m$-modules is compatible with the definition of induced representations of Galois representations.
\begin{proposition}
Suppose $V$ is a $p$-adic representation of $G_{L}$, then $\D(\Ind^{G_K}_{G_{L}}V)=\Ind^K_{L}\D(V)$ (resp. $\D^\dagger$, $\D^\dagger_\rig$).
\end{proposition}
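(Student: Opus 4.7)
The core idea is a tautological identification of induced objects, followed by taking $H_K$-invariants. Let $W = \Ind_{G_L}^{G_K} V$ with $G_K$-action $(g_0 \cdot f)(g) = f(gg_0)$. I would begin by constructing a natural $\bb$-linear, $G_K$-equivariant isomorphism
\[
\Theta: \bb \otimes_{\Qp} \Ind_{G_L}^{G_K} V \;\xrightarrow{\sim}\; \Ind_{G_L}^{G_K}(\bb \otimes_{\Qp} V),
\]
defined on simple tensors by $\Theta(b \otimes f)(g) = g(b) \otimes f(g)$, where on the right the group $G_L$ acts diagonally on $\bb \otimes V$. The map is clearly $\bb$-linear, and a short computation shows it is $G_K$-equivariant. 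Since both sides have the same $\bb$-dimension $[L:K]\dim_{\Qp} V$ (and injectivity is immediate from the formula), $\Theta$ is an isomorphism.

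Next I would pass to $H_K$-invariants of both sides, obtaining $\D(W)$ on the left by definition. On the right, I use that $H_K$ is normal in $G_K$: an $H_K$-invariant element of $\Ind_{G_L}^{G_K}(\bb \otimes V)$ is a function $\phi: G_K \to \bb \otimes V$ satisfying $\phi(gh) = \phi(g)$ for $h \in H_K$, so $\phi$ descends to $\bar{\phi}: \Gamma_K \to \bb \otimes V$. The condition $\phi(h'g) = h' \phi(g)$ for $h' \in G_L$ becomes, specialized to $h' \in H_L \subset H_K$ (where $\bar{h'}=1$), the statement that $\bar\phi(\bar g) \in (\bb \otimes V)^{H_L} = \D(V)$. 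Using the identification $G_L/H_L = \Gamma_L$ inside $\Gamma_K$ established by $H_L = G_L \cap H_K$, the general cocycle condition reads $\bar\phi(\gamma \bar g) = \gamma \bar\phi(\bar g)$ for $\gamma \in \Gamma_L$. This is precisely the defining condition for $\Ind^{\Gamma_K}_{\Gamma_L} \D(V) = \Ind_L^K \D(V)$.

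Combining these, I obtain a natural isomorphism $\D(\Ind_L^K V) \cong \Ind_L^K \D(V)$, and it remains to verify the structural compatibilities. For the $\bb_K$-module structure: since $\Theta(ab \otimes f)(g) = g(ab) \otimes f(g) = g(a) \cdot \Theta(b \otimes f)(g)$, a scalar $a \in \bb_K$ acts on the image side by $(a \cdot \bar\phi)(\bar g) = \bar g(a) \bar\phi(\bar g)$, matching the definition in the text. For $\varphi$, which acts only on the $\bb$-factor and commutes with $H_K$, one gets $(\varphi \bar\phi)(\bar g) = \varphi(\bar\phi(\bar g))$. For the $\Gamma_K$-action, right translation on $\phi$ passes to right translation on $\bar\phi$. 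All of these match the formulas defining $\Ind_L^K \D(V)$.

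Finally, the same argument works verbatim when $\bb$ is replaced by $\bb^\dagger$ or $\btrig{}{,K}$, since these are $G_K$-stable subrings of $\bb$ that still satisfy the Fontaine-type identity $(R \otimes_{\Qp} V)^{H_K} = \D^*(V)$ by Cherbonnier--Colmez and Berger, and since $\varphi$ and the $\bb_K$-module structure restrict compatibly. The main (only) obstacle is bookkeeping: one must juggle right and left actions and ensure $H_L = G_L \cap H_K$ is used correctly when descending from $G_K$-functions to $\Gamma_K$-functions; the rest is formal.
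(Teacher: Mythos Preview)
Your argument for $\D$ is essentially the paper's: the map $P$ there is your $\Theta$ restricted to $H_K$-invariants, and your descent from $G_K$-functions to $\Gamma_K$-functions is exactly what the paper verifies term by term. For $\D^\dagger$ your verbatim repetition with $\bdag{}$ in place of $\bb$ also works (since $\D^\dagger(V)=(\bdag{}\otimes V)^{H_K}$); the paper instead argues indirectly, noting that $\Ind^K_L\D^\dagger(V)$ is an \'etale $(\varphi,\Gamma)$-submodule of the already identified $\D(\Ind^{G_K}_{G_L}V)$ of the correct dimension, hence must equal $\D^\dagger(\Ind^{G_K}_{G_L}V)$.

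The genuine gap is your handling of $\D^\dagger_{\rig}$. The Robba ring $\btrig{}{,K}$ is \emph{not} a $G_K$-stable subring of $\bb$: it is not contained in $\bb$ at all (its elements need not be bounded), and it carries only a $\Gamma_K$-action, not a full $G_K$-action. There is no untilded global ring ``$\btrig{}{}$'' with Galois action whose $H_K$-invariants recover $\r_K$, and in this paper $\D^\dagger_{\rig}(V)$ is \emph{defined} as $\D^\dagger(V)\otimes_{\bdag{}_K}\r_K$, not by a Fontaine-style formula $(R\otimes V)^{H_K}$. So ``the same argument works verbatim'' cannot be run for the Robba ring. The remedy is what the paper does: once the isomorphism is established for $\D^\dagger$, tensor both sides over $\bdag{}_K$ with $\r_K$ and use that induction commutes with this base change, namely
\[
\Ind^K_L\bigl(\D^\dagger(V)\otimes_{\bdag{}_L}\r_L\bigr)\;\cong\;\bigl(\Ind^K_L\D^\dagger(V)\bigr)\otimes_{\bdag{}_K}\r_K.
\]
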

\begin{proof}
For the functor $\D$, we define a map $P$ from
$\D(\Ind^{G_K}_{G_{L}}V)=((\Ind^{G_K}_{G_{L}}V)\otimes_{\Qp}\bb)^{H_K}$
to $\Ind^K_{L}\D(V)$ as follows: for $\sum f_i\otimes b_i\in
((\Ind^{G_K}_{G_{L}}V)\otimes_{\Qp}\bb)^{H_K}$ and $\bar{g}\in
\Gamma_K$, we put $P(\sum f_i\otimes b_i)(\bar{g})=\sum f_i(g)\otimes g
b_i$, where $g$ is any lift of $\bar{g}$ in $G_K$. To see that $P$ is
well defined,  we first need to show that it doesn't depend on the
choice of $g$. In fact, for any $h\in H_K$ we have
\begin{center}$\sum f_i(gh)\otimes gh b_i=\sum (hf_i)(g)\otimes
g(hb_i)=\sum f_i(g)\otimes g b_i$,
\end{center}
where the last equality is concluded from the fact that $\sum
f_i\otimes b_i=\sum hf_i\otimes hb_i$, since $\sum f_i\otimes b_i$ is
$H_K$-invariant. Then for $h\in H_L$, we have
\[
h(\sum f_i(g)\otimes g b_i)=\sum hf_i(g)\otimes hgb_i=\sum
f_i(hg)\otimes hgb_i=\sum f_i(g)\otimes g b_i,
\]
since $hg$ is also a lift of $g$. This implies that $P(\sum
f_i\otimes b_i)(\bar{g})$ lies in $\D(V)$. For $\bar{h}\in
\Gamma_L$, we have
\[
P(\sum f_i\otimes b_i)(\bar{h}\bar{g})=\sum f_i(hg)\otimes
hgb_i=h(\sum f_i(g)\otimes gb_i)=\bar{h}(P(\sum f_i\otimes
b_i)(\bar{g}).
\]
It follows that $P(\sum f_i\otimes b_i)$ really lies in
$\Ind^K_{L}\D(V)$. It is obvious that $P$ is injective and
commutes with $\varphi$. Now we check that $P$ is a morphism of
$\m$-modules. For $a\in \calE_K$, we have
\[P(a(\sum f_i\otimes b_i))(\bar{g})=P(\sum f_i\otimes
ab_i)(\bar{g})=\sum f_i(g)\otimes g(a)g(b_i)=g(a)(\sum f_i(g)\otimes
gb_i)=(a(P(\sum f_i\otimes b_i)))(\bar{g}).
\] So $P$ is a morphism of $\calE_K$-modules. For $\bar{h}\in \Gamma_K$, we have
\[
P(\bar{h}(\sum f_i\otimes b_i))(\bar{g})=P(\sum hf_i\otimes
hb_i))(\bar{g})=\sum (hf_i)(g)\otimes ghb_i=\sum f_i(gh)\otimes
ghb_i=(\bar{h}P(\sum f_i\otimes b_i))(\bar{g}),
\]
hence $P$ is $\Gamma_K$-equivariant. Now note that
\begin{center}
$\dim_{\calE_K}(\Ind_{L}^{K}\D(V))=[L:K]\dim_{\calE_L}\D(V)=[L:K]\dim
V=\dim_{\calE_K}\D(\Ind^{G_K}_{G_{L}}V)$,
\end{center}
so $P$ is an isomorphism. Since $\Ind^K_L\D^\dagger(V)$ is an
\'etale $\m$-module over $\calE^\dagger_K$ contained in
$\Ind^K_{L}\D(V)=\D(\Ind^{G_K}_{G_{L}}V)$ and of maximal
dimension, we conclude that
$\Ind^K_{L}\D^\dagger(V)=\D^\dagger(\Ind^{G_K}_{G_{L}}V)$. Finally,
we get
\begin{center}
$\Ind^K_{L}\D^\dagger_{\rig}(V)=\Ind^K_{L}\D^\dagger(V)\otimes
\r_K=\D^\dagger(\Ind^{G_K}_{G_{L}}V)\otimes
\r_K=\D^\dagger_{\rig}(\Ind^{G_K}_{G_{L}}V)$.
\end{center}

\end{proof}
\begin{theorem}(Shapiro's Lemma for $\m$-modules)
Suppose $D$ is a $\m$-module over $\calE_{L}$, $\calE^\dagger_{L}$
or $\r_L$. Then there are isomorphisms
\[
H^i(D)\cong H^i(\Ind_{L}^KD)   \qquad (i=0,1,2)
\]
which are functorial in $D$ and compatible with cup products.

\end{theorem}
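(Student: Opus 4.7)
The plan is to realize the complex of $\Ind^K_L D$ explicitly as a ``thickened'' version of the complex of $D$, and then read off the isomorphism on cohomology by elementary linear algebra. First, choose a torsion subgroup $\triangle_K \subset \Gamma_K$ such that $\Gamma_K/\triangle_K$ is procyclic, and set $\triangle_L = \triangle_K \cap \Gamma_L$, so that $\Gamma_L/\triangle_L$ is also procyclic (using that $\Gamma_K$ is commutative and open in $\Zp^\times$). Picking a topological generator $\gamma_K$ of $\Gamma_K/\triangle_K$ and setting $n = [\Gamma_K/\triangle_K : \Gamma_L/\triangle_L]$, the element $\gamma_L := \gamma_K^n$ is a topological generator of $\Gamma_L/\triangle_L$. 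The independence of $H^\bullet$ from such choices, verified in the previous subsection, lets us work in this specific setup.

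Evaluation at the coset representatives $1, \gamma_K, \ldots, \gamma_K^{n-1}$ yields an identification $(\Ind^K_L D)^{\triangle_K} \cong (D^{\triangle_L})^n$, under which $\varphi$ acts diagonally and $\gamma_K$ acts by the twisted shift $(x_0, \ldots, x_{n-1}) \mapsto (x_1, \ldots, x_{n-1}, \gamma_L x_0)$. The complex $C^\bullet_{\varphi, \gamma_K}(\Ind^K_L D)$ is then presented as an explicit complex of copies of $D^{\triangle_L}$. A direct computation, essentially the standard Shapiro's lemma for procyclic groups built on the telescoping identity $\gamma_L - 1 = (\gamma_K - 1)(1 + \gamma_K + \cdots + \gamma_K^{n-1})$, shows: $H^0(\Ind^K_L D)$ is the diagonal copy of $D^{\gamma_L = 1, \varphi = 1}$; the summation map $(x_0, \ldots, x_{n-1}) \mapsto \sum_i x_i$ induces an isomorphism $H^2(\Ind^K_L D) \cong H^2(D)$, since an $n$-tuple lies in the image of $d_2$ iff its sum lies in $(\varphi-1)D + (\gamma_L-1)D$; and for $H^1$, the analogous map $(B, C) \mapsto (\sum_i B_i, C_0)$ descends to an isomorphism, as every cocycle $(B, C)$ can be normalized modulo the image of $d_1$ to the form $(0, \ldots, 0, B_{n-1}; C, \ldots, C)$ with $(\varphi - 1) B_{n-1} = (\gamma_L - 1) C$.

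Functoriality in $D$ is manifest from these explicit formulas. For compatibility with cup products, the natural $\calE_K$-linear map $\Ind^K_L D \otimes_{\calE_K} \Ind^K_L D' \to \Ind^K_L(D \otimes_{\calE_L} D')$ given by $f_1 \otimes f_2 \mapsto (g \mapsto f_1(g) \otimes f_2(g))$ intertwines the two cup products, as one checks directly from the cocycle formulas. The main bookkeeping obstacle will be the $H^1 \times H^1 \to H^2$ case, where the product formula $(\bar{x}, \bar{y}) \cup (\bar{z}, \bar{t}) = \overline{y \otimes \gamma(z) - x \otimes \varphi(t)}$ must be tracked through the twisted-shift action of $\gamma_K$; but after projecting to $H^2$ via the summation map, this reduces to the corresponding identity over $D \otimes_{\calE_L} D'$ with $\gamma_L$ in place of $\gamma_K$.
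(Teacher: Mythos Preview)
Your approach is correct and is essentially the same as the paper's, just organized a bit differently. Both arguments rest on describing $(\Ind^K_L D)^{\triangle_K}$ as $n$ copies of $D^{\triangle_L}$ via evaluation at coset representatives, with $\varphi$ acting diagonally and $\gamma_K$ acting as the twisted shift, and then reading off the cohomology using the telescoping identity $\gamma_L-1=(\gamma_K-1)(1+\gamma_K+\cdots+\gamma_K^{n-1})$. The paper packages this as an explicit morphism of complexes $C^\bullet_{\varphi,\gamma_L}(D)\to C^\bullet_{\varphi,\gamma_K}(\Ind^K_L D)$ built from maps $Q$ (sending $x$ to the function supported at $e$) and $\widetilde{Q}=\sum_i\gamma_K^iQ$, and then checks it is a quasi-isomorphism degree by degree, invoking the Five Lemma for $H^1$; your summation map on $H^2$ is precisely the inverse of the paper's $Q$, and your diagonal description of $H^0$ matches the image of $\widetilde{Q}$. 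Your direct cocycle normalization for $H^1$ replaces the paper's Five Lemma step but encodes the same computation.

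One small point to tighten: when you set $\gamma_L:=\gamma_K^n$, the lift $\gamma_K\in\Gamma_K$ need not satisfy $\gamma_K^n\in\Gamma_L$ on the nose, only $\gamma_K^n\in\Gamma_L\cdot\triangle_K$; since you are acting on $D^{\triangle_L}$ and $\triangle_L=\triangle_K\cap\Gamma_L$, this ambiguity is harmless, but it is worth saying explicitly. Also note that the paper's proof does not actually verify the cup product compatibility it asserts; your sketch of this via the natural map $\Ind^K_L D\otimes\Ind^K_L D'\to\Ind^K_L(D\otimes D')$ is a genuine addition.
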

\begin{proof}We first prove the theorem in the case that both of $\Gamma_K$ and $\Gamma_{L}$ are procyclic.
Suppose $[\Gamma_K:\Gamma_{L}]=m$. Choose a topological generator
$\gamma_K$ of $\Gamma_K$, then $\gamma_L=\gamma_K^m$ is a
topological generator of $\Gamma_{L}$. Define
$Q:D\rightarrow\Ind_{L}^KD$ as follows: for any $x\in D$,
$(Q(x))(e)=x$ and $(Q(x))(\gamma_K^i)=0$ for $1\leq i\leq m-1$. Then
$Q$ is a well defined $\varphi$, $\Gamma_{L}$-equivariant injective
morphism of $\r_K$-modules. We claim that $Q$ induces a
$\varphi$-equivariant isomorphism from $D/(\gamma_L-1)$ to
$(\Ind^K_{L}D)/(\gamma_K-1)$. Suppose $x\in D$, and
$Q(x)=(\gamma_K-1)f$ for some $f\in\Ind^K_{L}D$. Then we have
\begin{eqnarray*}
 x&=&Q(x)(e)=(\gamma_K-1)f(e)=f(\gamma_K)-f(e) \\
 0&=&Q(x)(\gamma_K^i)=(\gamma_K-1)f(\gamma_K^i)=f(\gamma_K^{i+1})-f(\gamma_K^i) \qquad 1\leq i\leq m-1. \end{eqnarray*}
Summing these equalities, we get $x=\sum_{i=0}^{m-1}(f(\gamma_K^{i+1})-f(\gamma_K^i))=f(\gamma_K^{m})-f(e)=(\gamma_L-1)f(e)$ since $\gamma_K^m=\gamma_L$.
 On the other hand, for any $f\in\Ind^K_{L}D$, suppose $f(\gamma_K^i)=x_i$ for $0\leq i\leq m-1$.
 Then $f=\sum_{i=1}^{m}\gamma_K^i Q((\gamma_L)^{-1}x_{m-i})$ since for $0\leq j\leq m-1$, we have
 \begin{center}
 $(\sum_{i=1}^{m}\gamma_K^iQ((\gamma_L)^{-1}x_{m-i}))(\gamma_K^j)
 =\sum_{i=1}^{m}Q((\gamma_L)^{-1}x_{m-i})(\gamma_K^{i+j})=Q((\gamma_L)^{-1}x_{j})(\gamma_K^m)=x_j$.
 \end{center}
 So both of $f$ and $Q(x)$, where $x=\gamma_L^{-1}(\sum_{i=1}^{m}x_{m-i})$, have the same image in $(\Ind^K_{L}D)/(\gamma_K-1)$.

For any $g\in\Gamma_K$, define the morphism $Q^g$ by
$Q^g(x)=g(Q(x))$ for any $x\in D$. Set
$\widetilde{Q}=\sum_{i=0}^{m-1}Q^{\gamma_K^i}$ which is also
$\varphi$, $\Gamma_{L}$-equivariant and injective since
$(\widetilde{Q}(x))(e)=x$. We claim that $\widetilde{Q}$ induces an
$\varphi$-equivariant isomorphism from $D^{\Gamma_{L}}$ to
$(\Ind^K_{L}D)^{\Gamma_K}$. The injectivity is obvious. Conversely,
suppose $f:\Gamma_K\rightarrow D$ is an element of
$(\Ind^K_{L}D)^{\Gamma_K}$ with $f(e)=x$. Then $f(g)=(gf)(e)=f(e)=x$
for any $g\in\Gamma_K$ since $f$ is $\Gamma_K$-invariant. On the
other hand, for $g\in\Gamma_{L}$, we have $f(g)=gf(e)=gx$. These
imply that $x$ is $\Gamma_{L}$-invariant. Therefore
$\widetilde{Q}(x)=f$.

Consider the following commutative diagram:
\[
\xymatrix{ C^{\bullet}_{\varphi,\gamma'}(D):0 \ar[r] &
D\ar^{\widetilde{Q}}[d] \ar[r] & D\oplus D
\ar^{Q\oplus\widetilde{Q}}[d] \ar[r]
& D \ar^{Q}[d] \ar[r] & 0 \\
C^{\bullet}_{\varphi,\gamma}(\Ind^K_{L}D):0\ar[r] & \Ind^K_{L}D
\ar[r] & \Ind^K_{L}D\oplus \Ind^K_{L}D \ar[r] & \Ind^K_{L}D \ar[r] &
0. }
\]
This induces morphisms $\alpha^i$ from $H^{i}(D)$ to
$H^{i}(\Ind^K_{L}D)$. We will prove that they are isomorphisms.

For $H^0$,  $\widetilde{Q}$ induces a $\varphi$-equivariant
isomorphism from $D^{\Gamma_{L}}$ to $(\Ind^K_{L}D)^{\Gamma_K}$.
Taking $\varphi$-invariants, we conclude that $\alpha^0$ is an
isomorphism. For $H^2$, $Q$ induces a $\varphi$-equivariant
isomorphism from $D/(\gamma_L-1)$ to $(\Ind^K_{L}D)/(\gamma_K-1)$,
so $\alpha^2$ is also an isomorphism.

For $H^1$, we use the following commutative diagram:
\[
\xymatrix{0 \ar[r] &
D^{\Gamma_{L}}/(\varphi-1)\ar^{\widetilde{Q}}[d] \ar[r] & H^1(D)
\ar^{\alpha^1}[d] \ar[r]
& (D/(\gamma_L-1))^{\varphi=1} \ar^{Q}[d] \ar[r] & 0 \\
0\ar[r] & (\Ind^K_{L}D)^{\Gamma_K}/(\varphi-1)
\ar[r] & H^1(\Ind^K_{L}D)\ar[r] & (\Ind^K_{L}D/(\gamma_K-1))^{\varphi=1}
\ar[r] & 0. }
\]
We have proved that $\widetilde{Q}$ and $Q$ are isomorphisms. So $\alpha^1$ is an isomorphism by the Five Lemma.

For the general case, let $\triangle_K$ and $\triangle_{L}$ be the
torsion subgroups of $\Gamma_K$ and $\Gamma_{L}$ respectively. Then
$\Gamma_{L}/\triangle_{L}$ is a subgroup of $\Gamma_K/\triangle_K$.
Let $\gamma_K$ be a topological generator of $\Gamma_K/\triangle_K$.
Suppose $[\Gamma_K/\triangle_K:\Gamma_{L}/\triangle_{L}]=m$, then
$\gamma_L=\gamma_K^m$ is a topological generator of
$\Gamma_{L}/\triangle_{L}$. Set $Q':D'\rightarrow(\Ind_{L}^KD)'$ as
follows: for any $x\in D'$, $(Q'(x))(e)=x$ and $(Q'(x))(y)=0$ for
any other representative of $\Gamma_K/\Gamma_{L}$. We define
$\widetilde{Q}^{'}=\sum_{i=0}^{m-1}\gamma^iQ'$. Replacing $Q$ by
$Q'$, and $\widetilde{Q}$ by $\widetilde{Q}^{'}$ in the above
argument, we are done.
\end{proof}

\subsection{Comparison theorems}
For a $\Z$-representation $V$ (of finite length or not), define $H^\bullet(\D(V))$ using the same complex as in the last section. The groups $H^\bullet(\D(V))$ are also well defined by the same argument (Note: for $p=2$, there is only one choice of $\triangle_K$, so it is well defined automatically. However, the description of $H^1$ in terms of extensions does not apply to $\mathbb{Z}_2$, because the projection $p_\Delta$ is not integral.). The following theorem was first proved by Herr (\cite{H98}) in case $\Gamma_K$ is procyclic. Our result is a small improvement of his result since $\Gamma$ is always procyclic for $p\neq2$.
\begin{theorem}Let $V$ be a $\Z$-representation of $G_K$. Then there are
isomorphisms
\[
H^i(\D(V)) \cong H^i(G_K, V) \qquad (i=0,1,2)
\]
which are functorial in $V$ and
compatible with cup products. The same conclusion therefore also holds for $\Qp$-representations.
\end{theorem}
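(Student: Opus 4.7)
The strategy is to reduce to Herr's theorem \cite{H98}, which establishes the isomorphism under the hypothesis that $\Gamma_K$ is procyclic. For $p$ odd, $\Zp^\times\cong\mathbb{Z}/(p-1)\times\Zp$ is procyclic (since $\gcd(p-1,p)=1$), hence so is every open subgroup $\Gamma_K$. Choosing $\triangle_K=\{1\}$, which is permitted because Section 2.1 shows $H^\bullet(\D(V))$ is independent of this choice, the complex $C_{\varphi,\gamma}^\bullet(\D(V))$ agrees with Herr's and the theorem is immediate. So the only case requiring a new argument is $p=2$ with $\sqrt{-1}\notin K$.

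In this remaining case, set $L=K(\sqrt{-1})=K(\mu_4)$. Since $\mu_4\subset\mu_{2^\infty}$, we have $L\subset K_\infty$, so $L_\infty=K_\infty$, $H_L=H_K$, and consequently $\D_L(V|_{G_L})=\D_K(V)$ as $\varphi$-modules; moreover $\Gamma_L\subset 1+4\mathbb{Z}_2$ is procyclic of index $2$ in $\Gamma_K$. Herr's theorem applied to $L$ yields functorial isomorphisms $H^i(\D_L(V|_{G_L}))\cong H^i(G_L,V|_{G_L})$ compatible with cup products. I then invoke Shapiro's lemma for $\m$-modules (Theorem 2.2) together with Proposition 2.1 on the module side, and the classical Shapiro's lemma on the Galois side, to deduce for $W=\Ind_{G_L}^{G_K}(V|_{G_L})$ the isomorphism $H^i(\D_K(W))\cong H^i(G_K,W)$, which establishes the theorem for the induced representation $W$.

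The final step is to descend from $W$ to $V$. By Mackey, $W\cong V\otimes_{\mathbb{Z}_2}\Ind_{G_L}^{G_K}\mathbb{Z}_2$, which splits as $V\oplus V(\chi)$ after inverting $2$, where $\chi$ is the nontrivial character of $\Gal(L/K)$. Over $\mathbb{Q}_2$-coefficients this direct summand decomposition transfers through both cohomology theories to yield the isomorphism for $V$. For $\mathbb{Z}_2$-coefficients, where the splitting fails integrally, one applies the five-lemma to the short exact sequences $0\to V\to W\to V(\chi)\to 0$ and $0\to V(\chi)\to W\to V\to 0$ (arising from the diagonal inclusion and the trace map of $\mathbb{Z}_2[G_K/G_L]$), combined with the $\mathbb{Q}_2$-case and the base case $i=0$, where the comparison map is an isomorphism directly by Fontaine's equivalence.

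The main technical obstacles are the descent from $W$ to $V$ in the $\mathbb{Z}_2$-coefficient case, and verifying compatibility with cup products throughout; the latter follows by naturality, since Herr's isomorphism, Shapiro's lemma in both cohomology theories (proved compatible with cup products in Theorem 2.2), and the restriction/induction adjunction all respect cup products.
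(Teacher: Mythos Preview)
Your approach is genuinely different from the paper's. The paper does not reduce to the procyclic case via Shapiro's lemma; instead, it directly adapts Herr's argument (in Colmez's exposition) by replacing $H_K$ with the preimage $H'_K\subset G_K$ of $\triangle_K$, so that $G_K/H'_K=\Gamma_K/\triangle_K$ is procyclic, and running the same proof with $(\D(V))'$ in place of $\D(V)$. This is done first for $V$ of finite length, then extended to arbitrary $\mathbb{Z}_p$-representations by an inverse-limit argument using Mittag--Leffler. Your strategy of passing to $L=K(\mu_4)$ and using Shapiro is an interesting alternative, and it does yield the result cleanly for $\mathbb{Q}_2$-representations via the splitting $W\cong V\oplus V(\chi)$.

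However, your integral descent for $p=2$ has a real gap. The five-lemma argument presupposes a natural comparison map $c_V:H^i(\D_K(V))\to H^i(G_K,V)$, defined for \emph{every} $\mathbb{Z}_2$-representation $V$ of $G_K$ and compatible with connecting homomorphisms. But Herr's chain-level comparison is only available over the procyclic base $L$, and Shapiro only transports it to $K$ for representations of the form $W=\Ind_{G_L}^{G_K}(-)$; it does not furnish a map for $V$ itself. Defining $c_V$ naively as the composite $H^i(\D_K(V))\to H^i(\D_K(W))\cong H^i(G_K,W)\to H^i(G_K,V)$ gives only a map whose composite with its analogue in the other direction is multiplication by $2$, so kernel and cokernel are merely $2$-torsion---not zero. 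Without an independently constructed natural transformation $c$, there is no commutative ladder of long exact sequences to which any form of the five-lemma applies. (Even granting such a $c$, the bare five-lemma is insufficient: knowing $c_W$ and $c^0$ are isomorphisms does not pin down $c^1_V$ directly; one needs a two-step four-lemma argument, first obtaining injectivity of $c^1_V$ and $c^1_{V(\chi)}$ from one sequence, then surjectivity from the other.) Finally, note that Proposition~2.1 and Theorem~2.2, which you invoke, are stated and proved in the paper only over the fields $\calE_L,\calE_L^\dagger,\r_L$, i.e.\ for $\mathbb{Q}_p$-coefficients.
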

\begin{proof}
For $V$ of finite length, we adapt the proof given by \cite[Theorem 5.2.2]{C04} to the case, where $\Gamma$ is not necessarily procyclic. Let $H'_K$ denote the preimage of $\triangle_K$ in $G_K$. Replacing $H_K$ by $H'_K$ and
$\D(V)$ by $(\D(V))'$ in their proof then it works for general $\Gamma$.
For general $V$, note that the inverse system $\{H^i(\D(V/p^nV))\cong H^i(G_K, V/p^nV)\}$
satisfies the Mittag-Leffler condition,
so we can conclude the result by taking the inverse limit of $\{H^i(\D(V/p^nV))\cong H^i(G_K, V/p^nV)\}$.
\end{proof}
In the remainder of this section, $V$ is a $\Qp$-representation.
\begin{lemma}
The morphism $\gamma-1:((\D^\dagger(V))')^{\psi=0}\longrightarrow((\D^\dagger(V))')^{\psi=0}$ has a continuous inverse.
\end{lemma}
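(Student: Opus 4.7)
My plan is to first establish the analogous bijectivity of $\gamma-1$ on the non-overconvergent module $((\D(V))')^{\psi=0}$, which is essentially classical and due to Fontaine and Cherbonnier--Colmez, and then to transfer the result to the overconvergent submodule via explicit norm estimates on $\bdag{,r}_K$. Continuity of the inverse will follow from uniform norm bounds constructed along the way, or alternatively from the open mapping theorem applied to the LF-space structure of $((\D^\dagger(V))')^{\psi=0} = \bigcup_r (\D^{\dagger,r}(V)')^{\psi=0}$.

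For Step 1 (the $\bb_K$-version), I would choose a $G_K$-stable $\Z$-lattice $T\subset V$, reduce modulo $p$, and work with the resulting $\ee_K$-module $(\D(T)/p)^{\psi=0}$. There, a direct computation based on the decomposition $D = \varphi(D)\oplus D^{\psi=0}$ (and an explicit basis of $D^{\psi=0}$ of the form $(1+\overline{\pi}_K)^a\varphi(e_i)$ with $a$ coprime to $p$) shows that $\gamma-1$ is bijective, since $\chi(\gamma)\neq 1$ in $\Z^\times$ acts nontrivially on the relevant cyclotomic ``support'' of $\psi=0$ elements. A standard Nakayama and $p$-adic completeness argument applied to the $\aa_K$-lattice $\D(T)$ promotes this to bijectivity on $(\D(T)')^{\psi=0}$, and inverting $p$ then yields the result on $((\D(V))')^{\psi=0}$.

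For Step 2 (overconvergent descent), given $x\in((\D^\dagger(V))')^{\psi=0}$ let $y\in((\D(V))')^{\psi=0}$ be the unique solution from Step 1. I must show $y\in\D^{\dagger,r}(V)$ for some $r$ and that the assignment $x\mapsto y$ is continuous in the Fréchet topology. The strategy is to expand $x$ in a $\bdag{,r}_K$-basis of $(\D^{\dagger,r}(V))^{\psi=0}$ (available for $r$ large enough by the overconvergence theorem of Cherbonnier--Colmez), write the matrix of $\gamma-1$ in this basis, and invert it with uniform norm bounds in each Fréchet seminorm on $\bdag{,r}_K$. The main obstacle is precisely this uniform bound: one must verify that inverting $\gamma-1$ does not enlarge the annulus of convergence. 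This rests on $\gamma$ acting isometrically on $\bdag{,r}_K$ for $r\gg 0$, combined with the mod-$p$ bijectivity from Step 1. Once uniform boundedness is in hand, continuity of the inverse in the LF topology is automatic, completing the proof.
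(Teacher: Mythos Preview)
Your plan amounts to reproving \cite[Proposition~2.6.1]{CC98} from scratch, whereas the paper simply quotes that result and performs a short reduction. Specifically, the paper picks a topological generator $\gamma'$ of the procyclic group $\Gamma_{K_1}$ with $\gamma'=\gamma^m$ in $\Gamma/\triangle_K$, invokes \cite[Proposition~2.6.1]{CC98} for the continuous inverse of $\gamma'-1$ on $\D^\dagger(V)^{\psi=0}$, projects by the idempotent $p_{\triangle_K}$ to get the same for $\gamma^m-1$ on $((\D^\dagger(V))')^{\psi=0}$, and concludes via $(\gamma-1)^{-1}=(\gamma^m-1)^{-1}(1+\gamma+\cdots+\gamma^{m-1})$. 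This is a five-line argument.

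Your Step~1 is fine and classical. The difficulty lies entirely in Step~2, and here your justification has a gap: ``$\gamma$ acts isometrically on $\bdag{,r}_K$'' together with ``mod-$p$ bijectivity'' does not suffice to show that $(\gamma-1)^{-1}$ preserves overconvergence. A lift-from-mod-$p$ argument is a $p$-adic successive approximation, but the $p$-adic completion of $\D^\dagger(T)$ is $\D(T)$, so the limit you build lands a priori only in $\D(V)$ --- which is exactly what Step~1 already gave you. Isometry of $\gamma$ does not help: it bounds $\gamma-1$, not $(\gamma-1)^{-1}$. The actual argument in \cite{CC98} exploits the decomposition $D^{\psi=0}=\bigoplus_a(1+\pi)^a\varphi(D)$ over representatives of $(\ZZ/p)^\times$ and shows that on each summand $\gamma-1$ factors as a unit times $(1+N)$ with $N$ small in the Fr\'echet norms, so that the inverse is given by a convergent series with explicit overconvergent bounds. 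Unless you intend to reproduce those estimates in full, it is far cleaner to cite \cite{CC98} directly and then carry out the short reduction above.
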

\begin{proof}
Note that $\chi(\Gamma_{K_1})\subset 1+p\Z$ is procyclic. We can
choose a topological generator $\gamma'$ of $\Gamma_{K_1}$ such that
$\gamma'=\gamma^m$ in $\Gamma/\triangle_K$ for some
$m\in\mathbb{N}$. Consider the commutative diagram:
\[
\xymatrix{
\D^\dagger(V)^{\psi=0}\ar^{p_{\triangle_K}}[d] \ar^{\gamma'-1}[r]& \D^\dagger(V)^{\psi=0} \ar^{p_{\triangle_K}}[d] \\
 ((\D^\dagger(V))')^{\psi=0} \ar^{\gamma^m-1}[r]&((\D^\dagger(V))')^{\psi=0}.}
\]
Since $\D^\dagger(V)^{\psi=0}\stackrel{\gamma'-1}\longrightarrow
\D^\dagger(V)^{\psi=0}$ has a continuous inverse by \cite[Proposition 2.6.1]{CC98}, and $p_{\triangle_K}$ is an idempotent operator,
we get that $((\D^\dagger(V))')^{\psi=0} \stackrel{\gamma^m-1}\longrightarrow((\D^\dagger(V))')^{\psi=0}$
has a continuous inverse. Then $(\gamma-1)^{-1}=(\gamma^m-1)^{-1}(1+\gamma+...+\gamma^{m-1})$ is also continuous.
\end{proof}
\begin{lemma}
Let $C^{\bullet}_{\psi,\gamma}(\D^\dagger(V))$ be the complex
\[
  0\stackrel{}{\longrightarrow}(\D^\dagger(V))'\stackrel{d_{1}}{\longrightarrow}(\D^\dagger(V))'\oplus (\D^\dagger(V))'
 \stackrel{d_{2}}{\longrightarrow}(\D^\dagger(V))'\stackrel{}{\longrightarrow}0
\]
with $d_1(x) = ((\gamma - 1)x, (\psi - 1)x)$ and $d_2((x,y)) =
((\psi - 1)x - (\gamma - 1)y)$. Then we have a commutative diagram
of complexes
\[
\xymatrix{ C^{\bullet}_{\psi,\gamma}(\D^\dagger(V)):0 \ar[r] &
(\D^\dagger(V))'\ar^{id}[d] \ar[r] &
(\D^\dagger(V))'\oplus(\D^\dagger(V))' \ar^{-\psi\oplus id}[d]
\ar[r]
& (\D^\dagger(V))' \ar^{-\psi}[d] \ar[r] & 0 \\
C^{\bullet}_{\psi,\gamma}(\D^\dagger(V)):0\ar[r] & (\D^\dagger(V))'
\ar[r] & (\D^\dagger(V))'\oplus (\D^\dagger(V))' \ar[r] &
(\D^\dagger(V))' \ar[r] & 0 }
\]
which induces an isomorphism on cohomology.
\end{lemma}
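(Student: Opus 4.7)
The plan is to realize the displayed vertical maps as a chain map $f$ fitting into a short exact sequence of complexes
\[
0 \longrightarrow K^\bullet \longrightarrow C^\bullet \xrightarrow{f} C^\bullet_{\psi,\gamma}(\D^\dagger(V)) \longrightarrow 0,
\]
and then to show that $K^\bullet$ is acyclic; the associated long exact sequence in cohomology will force $f$ to induce an isomorphism on $H^i$ for $i=0,1,2$.

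First, I would verify commutativity of the two squares by direct computation. The key identity is $-\psi(\varphi-1) = \psi - 1$, which follows immediately from $\psi\varphi = \mathrm{id}$; together with $\psi\gamma = \gamma\psi$ this reduces each commutativity check to a one-line algebraic manipulation and also pins down which of the two coordinates of $f_1$ carries the factor $-\psi$.

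Next, I would observe that every vertical component of $f$ is surjective: the identity trivially is, and $-\psi$ admits $-\varphi$ as a section since $\psi\varphi = \mathrm{id}$. Hence the cokernel of $f$ vanishes in every degree, confirming the right-exactness of the sequence above.

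I would then identify the kernel complex $K^\bullet$ degree by degree. Writing $D = (\D^\dagger(V))'$ for brevity, the kernel of the identity is $0$, the kernel of $-\psi$ is $D^{\psi=0}$, and the kernel of $-\psi \oplus \mathrm{id}$ is the single summand $D^{\psi=0}$ sitting in the appropriate coordinate of $D \oplus D$. A direct check using $\gamma\psi = \psi\gamma$ shows that the source-complex differentials do restrict to $K^\bullet$, leaving a two-term complex in degrees $1$ and $2$ whose only nontrivial differential is, up to sign, $\gamma - 1$ acting on $D^{\psi=0}$. By Lemma 2.7 this differential is a continuous isomorphism, so $K^\bullet$ is acyclic, and the long exact sequence finishes the proof. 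The main obstacle is the bookkeeping required to pin down $K^\bullet$ explicitly and to verify that the source-complex differentials really restrict to a single copy of $\gamma-1$ on $D^{\psi=0}$; this is precisely the point where the structure of the chain map and the content of Lemma 2.7 must dovetail.
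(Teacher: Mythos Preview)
Your argument is correct and follows essentially the same route as the paper: surjectivity of $\psi$ kills the cokernel complex, the kernel complex reduces to $0 \to ((\D^\dagger(V))')^{\psi=0} \xrightarrow{\pm(\gamma-1)} ((\D^\dagger(V))')^{\psi=0} \to 0$, and the earlier result that $\gamma-1$ is bijective on $((\D^\dagger(V))')^{\psi=0}$ gives acyclicity. One minor correction: the lemma you invoke is Lemma~2.4, not Lemma~2.7.
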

\begin{proof}
Since $\psi$ is surjective, the cokernel complex is $0$. The kernel
complex is
\[
0\longrightarrow0\longrightarrow ((\D^\dagger(V))')^{\psi=0}\stackrel{\gamma-1}\longrightarrow
((\D^\dagger(V))')^{\psi=0}\longrightarrow 0.
\]
 which has trivial cohomology by Lemma 2.2.
\end{proof}

\begin{lemma}
 Let $T$ be a $G_K$-stable $\Z$-lattice of $V$. Then the natural morphism $\D^\dagger(T)/(\psi-1)\rightarrow \D(T)/(\psi-1)$ is an isomorphism.
\end{lemma}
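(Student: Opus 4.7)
The natural approach is to apply the snake lemma to the short exact sequence of $\Zp$-modules with semilinear $\psi$-action
\[
0 \longrightarrow \D^\dagger(T) \longrightarrow \D(T) \longrightarrow Q \longrightarrow 0, \qquad Q := \D(T)/\D^\dagger(T),
\]
where we use that $\psi$ preserves $\D^\dagger(T)$ because $\psi(\adag{}_K) \subseteq \adag{}_K$ (Cherbonnier--Colmez). Applying snake to the endomorphism $\psi-1$ yields the six-term exact sequence
\[
0 \to \D^\dagger(T)^{\psi=1} \to \D(T)^{\psi=1} \to Q^{\psi=1} \to \D^\dagger(T)/(\psi-1) \to \D(T)/(\psi-1) \to Q/(\psi-1) \to 0,
\]
which reduces the claim to showing that $\psi - 1$ is bijective on $Q$.

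To prove bijectivity on $Q$, the plan is to exhibit $\psi$ as topologically nilpotent on $Q$ in an appropriate sense. Fixing an $\adag{}_K$-basis of $\D^\dagger(T)$, the matrix of $\psi$ has entries in $\adag{}_K$, so the question reduces to a statement about $\psi$ on the ring quotient $\aa_K/\adag{}_K$ twisted by this matrix. Using a filtration $\{\D^{\dagger,r}(T)\}_{r}$ of $\D^\dagger(T)$ by overconvergence radius to define a decreasing filtration of $\D(T)$ (whose intersection is $\D^\dagger(T)$), one aims to show that iterating $\psi$ moves elements of $\D(T)$ into deeper pieces of the filtration modulo $\D^\dagger(T)$. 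Granted such a contraction, $\sum_{n \geq 0} \psi^n$ converges on $Q$ and provides a two-sided inverse to $1-\psi$: surjectivity via the identity $(1-\psi)\bigl(-\sum_{n\ge0}\psi^n \bar x\bigr) = \bar x$, and injectivity because $(\psi-1)\bar y=0$ forces $\bar y = \psi^n \bar y \to 0$.

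The hard part is making the contraction precise. On the ring level $\psi$ does not literally preserve a single $\adag{,r}_K$, but on the quotient $Q$ one still expects a quantitative statement of the form ``for each $r$, iterated application of $\psi$ eventually produces representatives in $\adag{,r}_K\otimes$(basis)''. Extracting this from the underlying Cherbonnier--Colmez overconvergence theorem \cite{CC98}, viewed through the operator $\psi$, is the main technical obstacle. Once this is in place, the snake lemma argument concludes the proof.
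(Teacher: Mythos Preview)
Your snake-lemma reduction is correct and is a reasonable way to organize the argument: the lemma is equivalent to $\psi-1$ being bijective on $Q=\D(T)/\D^\dagger(T)$. But you stop exactly at the point where the content lies. The ``contraction'' of $\psi$ on $Q$ that you need is not a statement you can simply quote from Cherbonnier--Colmez; making it precise would require growth estimates of essentially the same strength as those the paper invokes, and you have not carried them out. As written, the proposal is a strategy with the key step missing.

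The paper avoids your obstacle by treating injectivity and surjectivity by two quite different, and each rather short, arguments. For injectivity it does \emph{not} iterate $\psi$: it uses the single-step estimate of \cite[Lemma~I.6.4]{CC99}, formulated via the functions $w_n$ on $\aa_{\Qp}$, to show directly that if $x\in\D^\dagger(T)$ and $x=(\psi-1)y$ with $y\in\D(T)$, then already $y\in\D^\dagger(T)$ (in your language, $Q^{\psi=1}=0$). For surjectivity it bypasses any analytic convergence argument entirely: since $\D(T)/(\psi-1)$ is a finitely generated $\Zp$-module (Herr), and since $\D^\dagger(T)/p=\D(T)/p$ (the two functors agree on torsion representations), one has $(\D^\dagger(T)/(\psi-1))/p=(\D(T)/(\psi-1))/p$, and Nakayama's lemma gives surjectivity. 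This Nakayama shortcut is the main thing your approach misses; it replaces the delicate question of summing $\sum_{n\ge0}\psi^n$ on $Q$ by a soft finiteness input.
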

\begin{proof}
We can view  $\D(T)/(\psi-1)$ (resp. $\D^\dagger(T)$) as an \'etale $\varphi$-module over $\aa_{\Q}$ (resp. $\aa^\dagger_{\Q}$). For $x\in\aa_{\Q}$ and $n\in\mathrm{N}$, define $w_n(x)\in\mathrm{N}$ to be the smallest integer $k$ such that $x\in\pi^{-k}\aa_{\Q}+p^{n+1}\aa_{\Q}$. Short
computations show that $w_n(x+y)\leq \sup\{w_n(x), w_n(y)\}$, $w_n(xy)\leq w_n(x)+w_n(y)$ and $w_n(\varphi(x))\leq pw_n(x)$. By \cite[Proposition III 2.1]{CC99}, for any interger $m>1$, $x\in \aa^{\dagger,m}_{\Q}$  if and only if $w_n(x)-n(p-1)(p^{m-1}-1)\leq0$ for every $n$, and moreover approaches $-\infty$ as $n\rightarrow\infty$. For a vector or matrix $X$ with entries in $\aa_{\Q}$, define $w_n(X)$ as the maximal $w_n$ among the entries. Pick a basis $\{e_1, e_2,..., e_d\}$ of $\D^\dagger(T)$ over $\aa_{\Q}^\dagger$. For any $x\in\D(T)$, define
$w_n(x)=w_n(X)$ if $x=X(e_1, e_2,..., e_d)^t$. Let $A\in \rm{GL}(\aa_{\Q}^{\dagger})$ defined by $\varphi(e_1,e_2,..., e_d)^t=A(e_1, e_2,..., e_d)^t$.

Suppose $x=\psi(y)-y$, for $x=X(e_1, e_2,..., e_d)^t\in \D^\dagger(T)$ and $y=Y(e_1, e_2,..., e_d)^t\in\D(T)$. Then from \cite[Lemma I.6.4]{CC99} we have
\begin{eqnarray*}
w_n(y)&\leq&\max\{w_n(x), \frac{p}{p-1}(w_n(A^{-1})+1)\}.
\end{eqnarray*}
Now suppose all the entries of $X$ and $A^{-1}$ lie in $\aa_{\Q}^{\dagger,m}$ for some $m$. It follows that all the entries of $Y$ are in $\aa_{\Q}^{\dagger,m+1}$, hence $y\in\D^\dagger(T)$. This proves the injectivity of $\D^\dagger(T)/(\psi-1)\rightarrow \D(T)/(\psi-1)$.

Since $\D(T)/(\psi-1)$ is a finite $\Z$-module (\cite[Proposition 3.6]{H98}), so too is $\D^\dagger(T)/(\psi-1)$. Note that
\[
\D^\dagger(T)/(p)=\D^\dagger(T/(p))=\D(T/(p))=\D(T)/(p)
\]
since $\D^\dagger$ and $\D$ are identical at torsion level. Therefore
\[
 (\D^\dagger(T)/(\psi-1))/(p)=(\D^\dagger(T)/(p))/(\psi-1)=(\D(T)/(p))/(\psi-1)=(\D(T)/(\psi-1))/(p).
\]
This implies $\D^\dagger(T)/(\psi-1)\rightarrow\D(T)/(\psi-1)$ is
surjective by Nakayama's Lemma. Hence $\D^\dagger(T)/(\psi-1)\ra\D(T)/(\psi-1)$ is an isomorphism.
\end{proof}
\begin{proposition} Let $V$ be a $p$-adic representation of
$G_K$. Then the natural morphisms
\[
H^i(\D^\dagger(V))\stackrel{\alpha_i}\longrightarrow H^i(\D^\dagger_{\rig}(V)),
\quad H^i(\D^\dagger(V))\stackrel{\beta_i}\longrightarrow H^i(\D(V)) \qquad i=0,1,2 \]
are all isomorphisms which are functorial in $V$ and compatible with
cup products.
\end{proposition}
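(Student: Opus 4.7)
The plan is to pass to the $\psi$-variant $C^\bullet_{\psi, \gamma}$ of the Herr complex for each of $\D(V), \D^\dagger(V), \D^\dagger_{\rig}(V)$, using Lemma 2.3 for the middle term and straightforward extensions of it to the other two: the argument there requires only that $\psi$ be surjective on the $\triangle_K$-invariants and that $\gamma - 1$ admit a continuous inverse on the kernel of $\psi$, both of which extend from Lemma 2.2 by scalar extension from $\bdag{}_K$ to $\bb_K$ and $\btrig{}{,K}$. Once we have passed to $C^\bullet_{\psi, \gamma}$, a direct computation shows this complex is acyclic on any $(\varphi, \Gamma)$-module $M$ on which $\psi - 1$ acts bijectively: vanishing of $H^0$ and $H^2$ is immediate, and for $H^1$, given a cocycle $(x, y)$ with $(\psi - 1)x = (\gamma - 1)y$, the element $a = (\psi - 1)^{-1} y$ witnesses it as a coboundary, using that $\psi - 1$ and $\gamma - 1$ commute.

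Consequently, applying the short exact sequences of $\psi$-complexes induced by the inclusions $\D^\dagger(V) \hookrightarrow \D(V)$ and $\D^\dagger(V) \hookrightarrow \D^\dagger_{\rig}(V)$, it suffices to show that $\psi - 1$ acts bijectively on the two quotients $\D(V)/\D^\dagger(V)$ and $\D^\dagger_{\rig}(V)/\D^\dagger(V)$. For $\beta_i$, surjectivity of $\psi - 1$ on $\D(V)/\D^\dagger(V)$ is a direct rephrasing of the surjectivity half of Lemma 2.4 (after inverting $p$); injectivity combines the injectivity half of Lemma 2.4 with the overconvergence statement $\D(V)^{\psi = 1} \subseteq \D^\dagger(V)$, which can be extracted by applying the $w_n$-estimates in the proof of Lemma 2.4 to $\psi - 1$ kernel elements (equivalently, by invoking the Cherbonnier-Colmez overconvergence theorem).

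For $\alpha_i$, the main obstacle is to establish the analog of Lemma 2.4 with $\bb_K$ replaced by the Robba ring, namely that $\psi - 1$ is bijective on $\D^\dagger_{\rig}(V)/\D^\dagger(V)$. The strategy is to adapt the $w_n$-bound argument of Lemma 2.4 to the Fr\'echet ring $\btrig{}{,K}$: one fixes a basis of $\D^\dagger(V)$ over $\bdag{}_K$ and expresses elements of $\D^\dagger_{\rig}(V)$ as vectors with coefficients in $\btrig{}{,K}$, rewriting $(\psi - 1)y = x$ as a matrix equation and iterating to bound the overconvergence radius of $y$ in terms of that of $x$. The technical difficulty is that $w_n$ on $\btrig{}{,K}$ is no longer bounded as $n \to \infty$, so the bound has to be packaged as a uniform statement on each Fr\'echet subring $\btrig{,r}{,K}$ and then passed to the union. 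Once both bijectivity statements are in hand, functoriality of $\alpha_i$ and $\beta_i$ in $V$ is immediate from the chain-level definitions, and compatibility with cup products follows from the explicit formulas for the cup product written in the $\psi$-complex (which agree, up to signs, with those in the $\varphi$-complex).
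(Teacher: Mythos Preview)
Your treatment of $\beta_i$ is sound and close in spirit to the paper's argument for $\beta_2$: bijectivity of $\psi-1$ on $\D(V)/\D^\dagger(V)$ does follow from Lemma~2.6 together with the overconvergence $\D(V)^{\psi=1}\subset\D^\dagger(V)$, and this makes the $\psi$-complex of the quotient acyclic. (The paper instead handles $\beta_0$ directly as $V^{G_K}$, and gets both $\alpha_1$ and $\beta_1$ at once by interpreting $H^1$ as extensions and invoking the category equivalences of Theorems~1.1--1.3; this is slicker for $H^1$ than the $\psi$-route.)

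The argument for $\alpha_i$, however, has a real gap: the claimed bijectivity of $\psi-1$ on $Q=\D^\dagger_{\rig}(V)/\D^\dagger(V)$ is \emph{false}, so no adaptation of the $w_n$-estimates can rescue it. Indeed $Q^{\psi=0}\neq 0$ --- already for $V=\Qp$ the element $(1+\pi)\varphi(t)$ lies in $\r_K^{\psi=0}$ but not in $\calE_K^\dagger$ --- while $\varphi-1$ is bijective on $Q$ by \cite[Proposition~1.5.4]{Ke06}; the identity $\psi-1=-\psi\circ(\varphi-1)$ then shows that $(\varphi-1)^{-1}z$ is a nonzero element of $\ker(\psi-1)$ for every nonzero $z\in Q^{\psi=0}$. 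The correct input on the Robba side is therefore Kedlaya's $\varphi-1$ result, not a $\psi-1$ analogue: the paper uses it directly in the $\varphi$-complex to obtain $\alpha_0$ and $\alpha_2$. If you want to keep your quotient-complex framework, argue instead that the $\varphi$-complex of $Q$ is acyclic (immediate from bijectivity of $\varphi-1$ on $Q'$), or run the kernel/cokernel comparison of Lemma~2.5 on $Q$ itself to transfer acyclicity from the $\varphi$- to the $\psi$-complex.
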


\begin{proof}
It is clear that these morphisms are functorial in $V$ and
compatible with cup products. To prove they are isomorphisms,
first note that $H^1(\D^\dagger(V))$ (resp.
$H^1(\D^\dagger_{\rig}(V)$, $H^1(\D(V))$) classifies all the
extensions of $\calE^\dagger_K$ (resp. $\r_K$, $\calE_K$) by
$\D^{\dagger}(V)$ (resp. $\D_{\rig}^{\dagger}(V), \D(V)$) in the
category of \'etale $\m$-modules over $\calE^\dagger_K$ (resp.
$\r_K$, $\calE_K$). Since these categories are all equivalent to the
category of $p$-adic representations by Theorems 1.1, 1.2 and 1.3,
we conclude that both $\alpha_1$, $\beta_1$ are isomorphisms.

From  \cite[Proposition 1.5.4]{Ke06}, the natural maps
$\D^\dagger(V)^{\varphi=1}\rightarrow
\D^\dagger_{\rig}(V)^{\varphi=1}$ and
$\D^\dagger(V)/(\varphi-1)\rightarrow\D^\dagger_{\rig}(V)/(\varphi-1)$
are bijective. Taking $\triangle_K$-invariants of the first map,  we have that $((\D^\dagger(V))')^{\varphi=1}\rightarrow
((\D^\dagger_{\rig}(V))')^{\varphi=1}$  is an isomorphism. As in Lemma 2.4, by the following commutative diagram
\[
\xymatrix{
\D^\dagger(V)/(\varphi-1)\ar^{p_{\triangle_K}}[d] \ar[r]& \D^\dagger_\rig(V)/(\varphi-1) \ar^{p_{\triangle_K}}[d] \\
 (\D^\dagger(V))'/(\varphi-1) \ar[r]&(\D^\dagger_\rig(V))'/(\varphi-1)}
\]
and the fact that $p_{\triangle_K}$ is an idempotent operator, we get
$(\D^\dagger(V))'/(\varphi-1)\rightarrow(\D^\dagger_{\rig}(V))'/(\varphi-1)$ is also an isomorphism.
Therefore $\alpha_0$ and $\alpha_2$ are isomorphisms.

Since $H^0(\D^\dagger(V))=V^{\Gamma_K}=H^0(\D(V))$, we conclude that $\beta_0$ is an isomorphism.
By Lemmas 2.5 and 2.6 we have
\[
H^2(\D^\dagger(V))=(\D^\dagger(V))'/(\psi-1,\gamma-1)=(\D(V))'/(\psi-1,\gamma-1)=H^2(\D(V)).
\]
Hence $\beta_2$ is an isomorphism.
\end{proof}
As a consequence of this proposition, there are canonical isomorphisms \[
 H^i(\D^\dagger_\rig(V))\stackrel{\beta_i\alpha_i^{-1}}\longrightarrow H^i(\D(V))  \qquad
 i=0,1,2.
 \]
Composing them with isomorphisms in Theorem 2.3, we get the
following theorem.
\begin{theorem}
Let $V$ be a $p$-adic representation of $G_K$. Then there are
isomorphisms
\[
H^i(\D^\dagger(V)) \cong H^i(G_K, V) \qquad (i=0,1,2)
\]
\[
H^i(\D^\dagger_{\rig}(V)) \cong H^i(G_K, V) \qquad
(i=0,1,2)
\]
which are functorial in $V$ and compatible with cup products.
\end{theorem}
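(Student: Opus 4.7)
The plan is to simply chain together the two results already in hand: Theorem 2.3 identifies $H^i(\D(V))$ with $H^i(G_K,V)$ in a manner functorial in $V$ and compatible with cup products, and the preceding Proposition provides isomorphisms
\[
H^i(\D^\dagger(V)) \xrightarrow{\beta_i} H^i(\D(V)), \qquad H^i(\D^\dagger(V)) \xrightarrow{\alpha_i} H^i(\D^\dagger_{\rig}(V))
\]
for $i=0,1,2$, again functorially and compatibly with cup products. So the first displayed isomorphism in the theorem will just be the composition $\beta_i$ followed by the isomorphism of Theorem 2.3, and the second will be $\beta_i \circ \alpha_i^{-1}$ followed by the isomorphism of Theorem 2.3.

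First I would record the composite $H^i(\D^\dagger(V)) \cong H^i(\D(V)) \cong H^i(G_K, V)$, which is immediate from Proposition on $\beta_i$ and Theorem 2.3. Next, inverting $\alpha_i$ (legal since the Proposition states it is an isomorphism) and composing with the previous chain yields the map $H^i(\D^\dagger_{\rig}(V)) \cong H^i(G_K, V)$. Since each factor in both composites is already asserted to be functorial in $V$ and compatible with cup products, the composites inherit these properties without further argument.

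There is essentially no obstacle in this proof: everything hard has already been done in proving Theorem 2.3 (the étale case via Herr, extended mildly to allow torsion in $\Gamma_K$), Lemmas 2.4--2.6 (controlling the comparison between $\D^\dagger$ and $\D$ via the $\psi$-operator and Cherbonnier--Colmez overconvergence), and Kedlaya's Proposition 1.5.4 (controlling the comparison between $\D^\dagger$ and $\D^\dagger_{\rig}$). The only thing to note explicitly is that the cup-product compatibility in the second isomorphism uses that $\alpha_i^{-1}$ respects cup products, which is automatic from the fact that $\alpha_i$ does. Thus the proof reduces to a one-line citation of the Proposition and Theorem 2.3.
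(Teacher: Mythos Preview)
Your proposal is correct and follows exactly the paper's approach: the paper simply notes that the canonical isomorphisms $\beta_i\alpha_i^{-1}: H^i(\D^\dagger_\rig(V)) \to H^i(\D(V))$ from the preceding Proposition, composed with the isomorphisms of Theorem 2.3, yield the result. The only cosmetic difference is that the paper does not even give a separate proof environment---it records the composites in the sentence immediately preceding the theorem statement.
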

\begin{corollary}The Euler-Poincar\'e characteristic formula and Tate local duality hold for all \'etale $\m$-modules over the Robba ring.
\end{corollary}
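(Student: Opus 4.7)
The plan is to transport the classical Euler-Poincar\'e characteristic formula and Tate local duality for $p$-adic Galois representations across the equivalence of categories of Theorem 1.4, using Theorem 2.8 to identify the two cohomology theories. First I would invoke Theorem 1.4 (Kedlaya's equivalence) to write any \'etale $\m$-module $D$ over $\r_K$ in the form $D = \D^\dagger_\rig(V)$ for a unique (up to isomorphism) $p$-adic representation $V$ of $G_K$ of dimension $d=\rank D$. By Theorem 2.8, one has $H^i(D) \cong H^i(G_K,V)$ functorially, so in particular the $H^i(D)$ are finite-dimensional $\Qp$-vector spaces, and the classical Euler-Poincar\'e formula for $V$ gives
\[
\chi(D) \;=\; \sum_{i=0}^2 (-1)^i \dim_{\Qp} H^i(G_K,V) \;=\; -[K:\Qp]\,\dim_{\Qp} V \;=\; -[K:\Qp]\,\rank D,
\]
which is part (a).

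For part (b), I would first check that the functor $\D^\dagger_\rig$ is compatible with duals and twists, i.e.\ that $\D^\dagger_\rig(V^\vee(1)) \cong \D^\dagger_\rig(V)^\vee(\omega)$, with $\omega = \D^\dagger_\rig(\Qp(1))$; this is immediate from the construction, since $\D^\dagger_\rig$ is a tensor equivalence and sends $\Qp(1)$ to $\r(\omega)$ by definition. Applying Theorem 2.8 to $V$, to $V^\vee(1)$, and to $\Qp(1)$ yields a commutative diagram of cup product pairings
\[
\xymatrix{
H^i(D) \times H^{2-i}(D^\vee(\omega)) \ar[d]_{\cong} \ar[r] & H^2(\omega) \ar[d]^{\cong} \\
H^i(G_K,V) \times H^{2-i}(G_K,V^\vee(1)) \ar[r] & H^2(G_K,\Qp(1))
}
\]
because Theorem 2.8 asserts compatibility of the comparison isomorphisms with cup products. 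The bottom row is Tate's local duality pairing, which is perfect into $H^2(G_K,\Qp(1))\cong\Qp$; since the vertical arrows are isomorphisms (and in particular $H^2(\omega)\cong \Qp$ via the trace), the top pairing is also perfect.

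The substantive content here is entirely packed into Theorem 2.8 and the tensor-functoriality of $\D^\dagger_\rig$; the only step that really requires care is the compatibility of the comparison maps with the duality pairing, and this is where I would expect the main checking to lie. Concretely, one must verify that the identification $H^2(\omega) \cong H^2(G_K,\Qp(1)) \cong \Qp$ coming from Theorem 2.8 agrees with the trace map used in Tate's theorem, and that the cup product $D \otimes D^\vee(\omega) \to \omega$ corresponds under the equivalence to the evaluation pairing $V \otimes V^\vee(1) \to \Qp(1)$. Both of these reduce to the explicit form of the comparison given by Herr's complex at the \'etale level and its extension in \S2.1--2.3, so no new ingredient beyond Theorem 2.8 is required.
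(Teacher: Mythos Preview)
Your proposal is correct and follows essentially the same route as the paper: both deduce the corollary directly from Theorem~2.8 by transporting the classical Euler--Poincar\'e formula and Tate local duality across the comparison isomorphisms $H^i(\D^\dagger_\rig(V))\cong H^i(G_K,V)$, using their compatibility with cup products. The paper's proof is a single sentence to this effect, while you spell out the tensor-compatibility of $\D^\dagger_\rig$ with duals and twists; note only that the equivalence you want is Theorem~1.3 (Kedlaya), not Theorem~1.4 (the slope filtration).
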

\begin{proof}
From the above theorem, we have that $H^2(\D^\dagger_\rig(\Qp(1)))$ is canonically isomorphic to $H^2(\Qp(1))$,
and then the Euler-Poincar\'e characteristic formula and Tate local duality for \'etale $\m$-modules follow from the usual Euler-Poincar\'e characteristic formula and Tate local duality for Galois cohomology.
\end{proof}

\subsection{Cohomology of rank 1 $\m$-modules}
In this section, we provide an explicit computation of $H^2$ of rank
1 $\m$-modules over the Robba ring in case $K=\Qp$ and $p>2$ as a
complement to Colmez's results on $H^0$ and $H^1$. Although we don't
need this for the main theorems, it is useful for some purposes (see
\cite[Lemma 2.3.11]{BC07}). In this section, all $\m$-modules are
over the Robba ring and $K=\Qp$. Moreover, to be consistent with
Colmez's set up, we fix $L$ a finite extension of $\Qp$ as the
coefficient field. This means we consider $\m$-modules over
$\r_{\Qp}\otimes_{\Qp}L$, where $\varphi$ and $\Gamma$ act on $L$
trivially, $\gamma(T)=(1+T)^{\chi(\gamma)}-1$ and
$\varphi(T)=(1+T)^p-1$. Following Colmez's notation, we use $\r_L$
to denote $\r_{\Qp}\otimes_{\Qp}L$ in this section only. Note that
this is different from our usual definition of $\r_L$.

If $\delta$ is a continuous character from $\Q^{\times}$ to $L^{\times}$, we can associate a rank $1$ $\m$-module \emph{$\r(\delta)$} to $\delta$.
Namely, there is a basis $v$ of $\r(\delta)$ such that
\begin{center}
$\varphi(xv)=\delta(p)\varphi(x)v$ and $\gamma(xv)=\delta(\chi(\gamma))\gamma(x)v$
\end{center}
for any $x\in \r_L$. Here $\chi$ is the cyclotomic character. It is
obvious that such $v$ is unique up to a nonzero scalar of $L$. In
the sequel, for $a\in \r_L$, we use $a$ to denote the element $av$
of $\r(\delta)$. Conversely, Colmez (\cite[Proposition 4.2, Remark
4.3]{C05}) proved that if $D$ is a $\m$-module of rank $1$, then
there is a unique character $\delta$ such that $D$ is isomorphic to
$\r(\delta)$.

For simplicity, let $H^i(\delta)$ denote $H^i(\r(\delta))$.
In the following, the character $x$ is the identity character
induced by the inclusion of $\Q$ into $L$ and $|x|$ is the character
mapping $x$ to $p^{-v_p(x)}$. We use $\omega$ to denote $x|x|$; then $\r(\omega)=\D^\dagger_\rig(L(1))$, as described in the
introduction.

In \cite{C05}, Colmez computed $H^0$ and $H^1$ for all the $\m$-modules of rank $1$ when $p>2$. More precisely, he proved the following result (\cite[Proposition 3.1, Theorem 3.9]{C05}).
\begin{proposition}If $p>2$, then the following are true.
\begin{enumerate}
\item[(1)]For $i\in \mathbb{N}$, $H^0(x^{-i})=L\cdot t^i$. If $\delta\neq x^{-i}$ for $i\in\mathbb{N}$, then $H^0(\delta)=0$.
\item[(2)]For $i\in \mathbb{N}$, $H^1(x^{-i})$ is a $2$-dimensional $L$-vector
space generated by $(0, \overline{t^i})$ and $(\overline{t^i}, 0)$, and $H^1(\omega x^i)$ is also $2$-dimensional. If $\delta\neq x^{-i}$ or $\omega x^i$ for $i\in \mathbb{N}$, then
$H^1(\delta)$ is $1$-dimensional.
\end{enumerate}
\end{proposition}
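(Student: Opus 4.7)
The plan is to perform direct calculations with the Herr complex $C^\bullet_{\varphi,\gamma}(\r(\delta))$, exploiting the distinguished generator $v$ of $\r(\delta)$. Writing an element as $fv$ with $f\in\r_L$, the defining relations $\varphi(fv)=\delta(p)\varphi(f)v$ and $\gamma(fv)=\delta(\chi(\gamma))\gamma(f)v$ translate $fv\in H^0(\delta)$ into the pair of eigenvalue equations $\varphi(f)=\delta(p)^{-1}f$ and $\gamma(f)=\delta(\chi(\gamma))^{-1}f$ for all $\gamma\in\Gamma$. From $\varphi(t)=pt$ and $\gamma(t)=\chi(\gamma)t$ recorded in Section 1.1, one immediately gets $t^iv\in H^0(x^{-i})$. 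For the reverse inclusion in part (1), I would classify common $(\varphi,\Gamma)$-eigenvectors in $\r_L$: extracting a maximal power of $t$ from $f$ (to account for the order of vanishing at $\pi=0$) reduces matters to the statement that a $\Gamma$-invariant, $\varphi$-eigenvector in $\r_L$ must be a scalar; matching eigencharacters then forces $\delta=x^{-i}$.

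For part (2), I would rely on the extension interpretation: $H^1(\delta)$ classifies extensions of $\r_L$ by $\r(\delta)$ in the category of $\m$-modules. Generically this space is one-dimensional, corresponding to a single non-split class up to scaling. When $\delta=x^{-i}$, the identities $(\gamma-1)(t^iv)=0$ and $(\varphi-1)(t^iv)=0$ inside $\r(x^{-i})$ show immediately that $(0,\overline{t^i})$ and $(\overline{t^i},0)$ are cocycles; their $L$-linear independence and non-triviality in $H^1$ follows by chasing the definition of $d_1$ against the classification in (1), accounting for the extra dimension. When $\delta=\omega x^i$, however, $H^0$ provides no candidate cocycles. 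Here I would exploit the differential operator $\partial=\nabla/t$, which satisfies the twisted intertwinings $\partial\varphi=p\varphi\partial$ and $\partial\gamma=\chi(\gamma)\gamma\partial$ and therefore induces a morphism of Herr complexes between appropriately twisted rank-$1$ modules; embedding $\r(\omega x^i)$ in a short exact sequence involving $\partial$ and $\r(x^{i+1})$ produces a long exact sequence in cohomology that transports the exceptional dimension from the $H^0$ side into $H^1(\omega x^i)$.

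The main obstacle is the $\omega x^i$ case. Its extra dimension cannot be detected from $H^0$, and although Tate local duality would make it dual to the $x^{-i-1}$ case, invoking duality here would be circular since the present proposition is meant to illustrate (not consume) the duality theorem. An independent construction is therefore needed: either the $\partial$-induced long exact sequence sketched above, or an explicit cocycle $(a,b)\in\r(\omega x^i)^{\oplus 2}$ satisfying $(\varphi-1)a=(\gamma-1)b$ whose non-triviality is certified via a residue pairing $\Res\colon\r_L\ra L$ on the Herr complex. Once (1) and these explicit cocycles are in hand, the remaining dimension count in $C^\bullet_{\varphi,\gamma}(\r(\delta))$ (using that $\gamma-\delta(\chi(\gamma))^{-1}$ is invertible on the relevant quotients outside the exceptional characters) completes the proof.
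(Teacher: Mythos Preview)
The paper does not supply its own proof of this proposition: it is quoted verbatim from Colmez \cite[Proposition~3.1, Th\'eor\`eme~3.9]{C05}, so there is no in-paper argument to compare against. What follows is an assessment of your sketch on its own terms and against Colmez's method.

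Your treatment of $H^0$ is essentially right: reducing to the classification of joint $(\varphi,\Gamma)$-eigenvectors in $\r_L$ and showing these are the $L\cdot t^i$ is exactly Colmez's Proposition~3.1. Likewise, the exhibition of $(\overline{t^i},0)$ and $(0,\overline{t^i})$ as cocycles at $\delta=x^{-i}$ is correct and matches his argument.

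The real gap is in the generic $H^1$ case. You assert that ``generically this space is one-dimensional'' and defer to a ``dimension count,'' but this is the entire content of Colmez's Th\'eor\`eme~3.9 and it is not formal. His proof does not proceed by inverting $\gamma-\delta(\chi(\gamma))^{-1}$ alone; it uses the left inverse $\psi$ of $\varphi$, the decomposition $\r_L=\r_L^+\oplus(\text{a complement on which }\psi\text{ is nilpotent})$, and a case analysis on $v_p(\delta(p))$ to show that every cocycle is cohomologous to one of a prescribed explicit shape. Neither the lower bound ($\dim\geq 1$) nor the upper bound is immediate from your outline, and you cannot invoke Euler--Poincar\'e here since the paper deduces the rank~$1$ Euler characteristic formula (Corollary~2.14) \emph{from} this proposition.

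For $\delta=\omega x^i$ your $\partial$-based idea is not quite right as stated: $\partial\colon\r(x^{-1}\delta)\to\r(\delta)$ has both kernel and cokernel isomorphic to $L$ (constants, respectively $\Res$), so it does not sit in a short exact sequence of $(\varphi,\Gamma)$-modules over $\r_L$ and there is no long exact sequence in the Herr cohomology to harvest. The paper uses $\partial$ only to compare $H^2$'s (Proposition~2.12). Colmez handles the exceptional $H^1(\omega x^i)$ by your second alternative---an explicit cocycle construction---so that part of your plan is viable, but it is not the one you lead with.
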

In fact, we can follow Colmez's method to compute $H^2$ easily. For $f=\sum_{k\in \mathbb{Z}}a_kT^k\in\r_L$, we define
the residue of the differential form $\omega=fdT$ by the formula $\res(\omega)=a_{-1}$. Then $\omega$ is closed if and only if
$\res(\omega)=0$.
Recall that $\partial=(1+T)\frac{d}{dT}$, then $\ker(\partial)=L$ and $df=\partial f\frac{dT}{1+T}$.
We define $\Res(f)=\res(f\frac{dT}{1+T})$, then $f$ is in the image of $\partial$ if and only if $\Res(f)=0$.

Recall that we have the following formulas:
\begin{center}
$\partial\circ\varphi=p\varphi\circ\partial$ and $\partial\circ\gamma=\chi(\gamma)\gamma\circ\partial$.
\end{center}
If $\Res(f)=0$, then there exists a $g\in\r_L$ such that $\partial(g)=f$.
Therefore we have $\partial(\frac{1}{p}\varphi(g))=\varphi(f)$ and $\partial(\frac{1}{\chi(\gamma)}\gamma(g))=\gamma(f)$.
Hence $\Res(\varphi(f))=\Res(\gamma(f))=0$. In general, if $\Res(f)=a\in L$, then $\Res(f-a(\frac{1+T}{T}))=0$. So
$\Res(\gamma(f))=a\Res(\gamma(\frac{1+T}{T}))$ and $\Res(\varphi(f))=a\Res(\varphi(\frac{1+T}{T}))$.
Note that $\log\frac{\gamma(T)}{T}$ and $\log\frac{\varphi(T)}{T^p}$ are defined in $\r_L$. Hence
\begin{center}
$0=\res(d\log\frac{\gamma(T)}{T})=\res(\frac{d\gamma(T)}{\gamma(T)}-\frac{dT}{T})=
\res(\frac{\chi(\gamma)(1+T)^{\chi(\gamma)-1}dT}{\gamma(T)})-1=\Res(\chi(\gamma)\gamma(\frac{1+T}{T}))-1$,\\
$0=\res(d\log\frac{\varphi(T)}{T^p})=\res(\frac{d\varphi(T)}{\varphi(T)}-\frac{pdT}{T})=
\res(\frac{p(1+T)^{p-1}dT}{\varphi(T)})-p=\Res(p\varphi(\frac{1+T}{T}))-p$,
\end{center}
therefore $\Res(\gamma(\frac{1+T}{T}))=1/\chi(\gamma)$ and $\Res(\varphi(\frac{1+T}{T}))=1$. So we get
\begin{center}
$\Res(\gamma(f))=1/\chi(\gamma)\Res(f)$ and $\Res(\varphi(f))=\Res(f)$.
\end{center}

For any $x\in \r_L$, by the formulas
$\partial\circ\varphi=p\varphi\circ\partial$ and $\partial\circ\gamma=\chi(\gamma)\gamma\circ\partial$, we have
\begin{center}
$\partial((x^{-1}\delta)(p)\varphi(x))=\delta(p)\varphi(\partial(x))$\\
$\partial((x^{-1}\delta)(\chi(\gamma))\gamma(x))=\delta(\chi(\gamma))\gamma(\partial(x))$.
\end{center}
So $\partial$ induces an $L$-linear morphism, which commutes with $\varphi$ and $\Gamma$,
from $\r(x^{-1}\delta)$ to $\r(\delta)$ by mapping $x$ to $\partial x$.
Then $\partial$ induces an $L$-linear morphism from $H^i(x^{-1}\delta)$ to $H^i(\delta)$.
\begin{proposition}
In case $p>2$, if $v_p(\delta(p))<0$, then $H^2(\delta)=0$.
\end{proposition}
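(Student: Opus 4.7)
The plan is to adapt Colmez's $\psi$-operator method from \cite{C05}. Define $\psi$ on $\r(\delta)$ by $\psi(fv) := \delta(p)^{-1}\psi(f)v$, where $\psi : \r_L \to \r_L$ is the standard left inverse of $\varphi$ on the Robba ring. Then $\psi \circ \varphi = \mathrm{id}$ on $\r(\delta)$, and one has the decomposition $\r(\delta) = \varphi(\r(\delta)) \oplus \r(\delta)^{\psi=0}$.

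The first step is to replace the $\varphi$-complex by the $\psi$-complex, exactly as in Lemma 2.5: the natural map $C^\bullet_{\varphi,\gamma}(\r(\delta)) \to C^\bullet_{\psi,\gamma}(\r(\delta))$ given by $(\mathrm{id},\, -\psi\oplus\mathrm{id},\, -\psi)$ is a quasi-isomorphism, reducing the problem to showing that $H^2$ of the $\psi$-complex vanishes. As in Lemma 2.5, this reduces further to the statement that $\gamma-1$ acts invertibly on the $\triangle_K$-invariants of $\r(\delta)^{\psi=0}$, a twisted rank-one variant of \cite[Proposition 2.6.1]{CC98} which can be handled by the same continuous-inverse Neumann-series argument used in the proof of Lemma 2.2, the scalar character twist $\delta(\chi(\gamma))$ being absorbed harmlessly into the series.

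The second step is to exhibit surjectivity of $\psi - 1$ on $\r(\delta)$ under the hypothesis $v_p(\delta(p)) < 0$. Writing $\lambda = \delta(p)$, the equation $(\psi-1)(fv) = yv$ becomes $\lambda^{-1}\psi(f) - f = y$, and a telescoping identity shows that it is solved by
\[
f := -\sum_{n=0}^{\infty}\lambda^{-n}\psi^n(y).
\]
Convergence in the Fr\'echet topology of $\r_L$ follows because $v_p(\lambda^{-n}) = -n\,v_p(\lambda) \to +\infty$ supplies $p$-adic decay, while the iterates $\psi^n(y)$ remain controlled on each Banach piece of $\r_L$ thanks to the standard contraction estimate $\|\psi(g)\|_s \lesssim \|g\|_{s^{1/p}}$. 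Composing with the projector $p_\triangle$ restricts the surjectivity to $\r(\delta)'$, which immediately kills the $H^2$ of the $\psi$-complex and yields $H^2(\delta) = 0$. The main technical hurdle is arranging the convergence argument across the enlarging annuli produced by successive applications of $\psi$, but this is a routine exercise in the same style of estimates as \cite[Proposition 2.6.1]{CC98}.
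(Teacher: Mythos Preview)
Your route through the $\psi$-complex is different from the paper's, which stays entirely in the $\varphi$-complex: the paper invokes \cite[Corollary 1.3]{C05} (this is where the hypothesis $v_p(\delta(p))<0$ enters) to replace a given $f$ by $c=f-(\delta(p)\varphi-1)b\in(\calE^\dagger_L)^{\psi=0}$, and then kills $c$ by $(\delta(\chi(\gamma))\gamma-1)a$ using Lemma~2.2. That is a two-line argument once Colmez's corollary is granted.

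Your argument has a real gap in Step~2. Iterating the very estimate you quote gives $\|\psi^n(y)\|_s \lesssim C^n\|y\|_{s^{1/p^n}}$ with $s^{1/p^n}\to 1^-$. For $y\in\r_L\setminus\calE^\dagger_L$ the norms $\|y\|_\rho$ can blow up arbitrarily fast as $\rho\to 1^-$ (take for instance $y=\sum_{n\ge0}p^{-\lfloor\sqrt{n}\rfloor}T^n$, for which $\|y\|_{p^{-\epsilon}}\approx p^{1/(4\epsilon)}$, so $\|y\|_{s^{1/p^n}}$ grows like $p^{c\,p^n}$). This super-exponential growth swamps the merely exponential decay $|\lambda^{-n}|=p^{n|v_p(\lambda)|}$, and your series diverges. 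The estimate in \cite[Proposition 2.6.1]{CC98} does not help here: that result lives over $\calE^\dagger$, where all functions are bounded near $|T|=1$, which is precisely the regime your series \emph{does} converge in. To repair Step~2 you would first need to reduce $y$ to something bounded (or to the plus-part $\r_L^+$, with a separate and still nontrivial argument there); but that preliminary reduction is exactly what \cite[Corollary 1.3]{C05} supplies, and once you grant it you may as well run the paper's shorter argument directly.
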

\begin{proof}
For any $\bar{f}\in H^2(\delta)$, from \cite[Corollary 1.3]{C05}, there
is a $b\in\r_L$ such that $c=f-(\delta(p)\varphi-1)b$ lies in
$(\calE^\dagger_L)^{\psi=0}$. Since $\bar{c}=\bar{f}$ in
$H^2(\delta)$, we can just assume $f$ is in
$(\calE^\dagger_L)^{\psi=0}$. Then there exists an $a\in\calE^\dagger_L$
such that $f=(\delta(\chi(\gamma))\gamma-1)a$ by Lemma 2.2. Therefore $\bar{f}=0$ in $H^2(\delta)$.
\end{proof}

\begin{proposition}If $p>2$, then the following are true.
\begin{enumerate}
\item [(1)]
If $\delta\neq x$, then $\partial: H^2(\delta
x^{-1})\rightarrow H^2(\delta)$ is injective.
If $\delta\neq \omega$, then $\partial: H^2(\delta
x^{-1})\rightarrow H^2(\delta)$ is surjective. Therefore $\partial: H^2(\delta
x^{-1})\rightarrow H^2(\delta)$ is an isomorphism if $\delta\neq \omega, x$.
\item [(2)]$H^2(\omega)$ is a $1$-dimensional $L$-vector space, generated by $\overline{1/T}$.
\item [(3)]$H^2(x^k)=0$ for any $k\in\mathbb{Z}$. Combining with $(1)$, we conclude that $\partial$ is always injective.
\end{enumerate}
\end{proposition}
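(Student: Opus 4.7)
The plan is to package all three parts of the proposition into the analysis of a single four-term exact sequence of $L$-vector spaces equipped with $\varphi, \Gamma$-action. Using the identities $\partial\circ\varphi = p\varphi\circ\partial$, $\partial\circ\gamma = \chi(\gamma)\gamma\circ\partial$, $\Res(\varphi(f)) = \Res(f)$, and $\Res(\gamma(f)) = \chi(\gamma)^{-1}\Res(f)$ established just above, I would first verify that inclusion of constants on the left and the residue map on the right fit $\partial$ into
\[
0 \to L(\delta x^{-1}) \to \r(\delta x^{-1}) \xrightarrow{\partial} \r(\delta) \xrightarrow{\Res} L(\delta \omega^{-1}) \to 0,
\]
where $L(\eta)$ denotes the one-dimensional $L$-vector space (not an $\r_L$-module) on which $\varphi$ acts by the scalar $\eta(p)$ and $\gamma$ by $\eta(\chi(\gamma))$. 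The twist by $x^{-1}$ on the sub arises because $\varphi_{\r(\delta x^{-1})}$ acts by $(\delta x^{-1})(p) = \delta(p)/p$ on constants, and the twist by $\omega^{-1}$ on the quotient is forced by the $\chi(\gamma)^{-1}$ factor in the residue identity.

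An elementary computation with the Herr complex $C^{\bullet}_{\varphi,\gamma}$ applied to $L(\eta)$ shows that $H^i(L(\eta)) = 0$ for every $i$ whenever $\eta\neq 1$: if $\eta$ is nontrivial on the torsion subgroup $\triangle_{\Qp}$ the invariants vanish, and otherwise at least one of $\eta(p)-1$ or $\eta(\chi(\gamma))-1$ is a nonzero, hence invertible, element of $L$, which collapses the complex. Since $L(\delta x^{-1}) = L(1)$ exactly when $\delta = x$ and $L(\delta \omega^{-1}) = L(1)$ exactly when $\delta = \omega$, splitting the four-term sequence at $\mathrm{im}(\partial)$ and writing down the two resulting long exact sequences will immediately yield part (1): the segment $H^2(L(\delta x^{-1})) \to H^2(\r(\delta x^{-1})) \to H^2(\mathrm{im}(\partial))$ gives injectivity of $\partial$ for $\delta \neq x$, and the segment $H^1(L(\delta\omega^{-1})) \to H^2(\mathrm{im}(\partial)) \to H^2(\r(\delta)) \to H^2(L(\delta\omega^{-1}))$ gives surjectivity for $\delta \neq \omega$.

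For part (2) I specialize to $\delta = \omega$. Then $\delta x^{-1} = |x|$ has $v_p(|x|(p)) = -1 < 0$, so the preceding proposition gives $H^2(\r(|x|)) = 0$; combined with $H^i(L(|x|)) = 0$ this forces $H^2(\mathrm{im}(\partial)) = 0$, and the second half of the long exact sequence collapses to $0 \to H^2(\omega) \to H^2(L(1)) \to 0$, exhibiting $\Res$ as an isomorphism $H^2(\omega) \xrightarrow{\sim} L$. Since $\Res(1/T) = 1$, the class $\overline{1/T}$ is a generator. For part (3), the preceding proposition gives $H^2(x^k) = 0$ for every $k < 0$ because $v_p(x^k(p)) = k$, and since $x^k \neq \omega$ for any integer $k$ (their values on the uniformizer differ), part (1) supplies a chain of surjections $H^2(x^{-1}) \twoheadrightarrow H^2(1) \twoheadrightarrow H^2(x) \twoheadrightarrow H^2(x^2) \twoheadrightarrow \cdots$, forcing all terms to vanish.

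The only genuine verification is the identification of the sub- and quotient-characters in the four-term exact sequence; everything else reduces to computing the cohomology of $L(\eta)$ from its explicit Herr complex, using that $(-)^{\triangle_{\Qp}}$ is exact on $L$-vector spaces because $|\triangle_{\Qp}|$ divides $p-1$ and is therefore invertible in $L$, and to routine diagram chases with long exact sequences.
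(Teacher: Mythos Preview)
Your approach is correct and is a cleaner, more conceptual repackaging of the paper's argument. The paper proves (1) by direct element-chasing: given $\partial f$ equal to a coboundary $(\delta(\chi(\gamma))\gamma-1)a-(\delta(p)\varphi-1)b$, it uses the residue identities to arrange $\Res(a)=\Res(b)=0$, lifts $a,b$ through $\partial$, and then kills the remaining constant using $\delta\neq x$; surjectivity is handled by adjusting $f$ by a coboundary to force $\Res(f)=0$. Your four-term sequence $0\to L(\delta x^{-1})\to\r(\delta x^{-1})\xrightarrow{\partial}\r(\delta)\xrightarrow{\Res}L(\delta\omega^{-1})\to 0$ encodes exactly these manipulations: the adjustment of $a,b$ is the connecting map into $H^2(L(\delta x^{-1}))$, and the adjustment of $f$ is the vanishing of the image in $H^2(L(\delta\omega^{-1}))$. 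What you gain is that the casework collapses to the single computation $H^i(L(\eta))=0$ for $\eta\neq1$; what the paper gains is that it never leaves the world of genuine $\r_L$-modules, whereas you must apply the Herr complex to bare $L[\varphi,\Gamma]$-modules such as $L(\eta)$ and $\mathrm{im}(\partial)$ (which is not an $\r_L$-submodule), and rely on exactness of $(-)^{\triangle}$ to get the long exact sequences. That is legitimate, and you note it, but it is worth saying explicitly.

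One organizational point: your sentence ``the segment $H^2(L(\delta x^{-1}))\to H^2(\r(\delta x^{-1}))\to H^2(\mathrm{im}(\partial))$ gives injectivity of $\partial$ for $\delta\neq x$'' is not quite complete. That segment only shows $H^2(\r(\delta x^{-1}))\hookrightarrow H^2(\mathrm{im}(\partial))$; to reach $H^2(\r(\delta))$ you also need the second segment, which requires $\delta\neq\omega$. The remaining case $\delta=\omega$ is of course trivial because $H^2(\r(|x|))=0$ by the preceding proposition, and you do observe this in part~(2), but as written your proof of~(1) does not cover it. Either invoke $H^2(|x|)=0$ already in~(1), or note that parts~(2) and~(3) together close the gap and then deduce the full statement of~(1) at the end, as the paper does in its last clause of~(3).
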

\begin{proof}
For $(1)$, first suppose $\delta\neq x$. If $\partial(\bar{f})=0$ for some $\bar{f}\in H^2(x^{-1}\delta)$, this means that
there exist $a$, $b\in\r_L$ such that $\partial(f)=(\delta(\chi(\gamma))\gamma-1)a-(\delta(p)\varphi-1)b$.
Now since $\Res(\partial(f))=0$, we have $\Res((\delta(\chi(\gamma))\gamma-1)a)=\Res((\delta(p)\varphi-1)b)$.
Therefore
\begin{center}
$(\delta(\chi(\gamma))\chi(\gamma)^{-1}-1)\Res(a)=(\delta(p)-1)\Res(b).$
\end{center}
If $\delta(p)-1=0$, then $v_p(\delta x^{-1}(p))<0$. Therefore $H^2(\delta x^{-1})=0$ by Proposition 2.11.
If $\delta(p)-1$ is not zero,
let $c=(\delta(p)-1)^{-1}\Res(a)\frac{1+T}{T}$, $a'=a-(\delta(p)\varphi-1)c$ and
$b'=b-(\delta(\chi(\gamma))\gamma-1)c$. Then we have $\Res(a')=\Res(b')=0$
and $\partial(f)=(\delta(\chi(\gamma))\gamma-1)a'-(\delta(p)\varphi-1)b'$.

So we can assume that $\Res(a)=\Res(b)=0$. Now suppose
$\partial(\tilde{a})=a$ and $\partial(\tilde{b})=b$.
Let $\tilde{f}=f-((\delta(\chi(\gamma))\chi(\gamma)^{-1}\gamma-1)\tilde{a}-(\delta(p)p^{-1}\varphi-1)\tilde{b})$,
then $\partial(\tilde{f})=0$. This implies $\tilde{f}\in L$. Since $\delta\neq x$, we have either
$\delta(\chi(\gamma))\chi(\gamma)^{-1}-1\neq 0$ or $\delta(p)p^{-1}-1\neq 0$.
If $\delta(\chi(\gamma))\chi(\gamma)^{-1}-1\neq 0$,
let $\tilde{a}'=\tilde{a}+(\delta(\chi(\gamma))\chi(\gamma)^{-1}-1)^{-1}\tilde{f}$, then
$f=(\delta(\chi(\gamma))\chi(\gamma)^{-1}\gamma-1)\tilde{a}'-(\delta(p)p^{-1}\varphi-1)\tilde{b}$. So $\bar{f}=0$ in
$H^2(\delta x^{-1})$. If $\delta(p)p^{-1}-1$ is not
zero, let $\tilde{b}'=\tilde{b}-(\delta(p)p^{-1}-1)^{-1}\tilde{f}$, then
$f=(\delta(\chi(\gamma))\chi(\gamma)^{-1}\gamma-1)\tilde{a}-(\delta(p)p^{-1}\varphi-1)\tilde{b}'$.
So $\bar{f}$ is also zero.

If $\delta\neq \omega$,
then either $\delta(\chi(\gamma))\chi(\gamma)^{-1}-1$ or $\delta(p)-1$ is not zero.
Hence for any $\bar{f}\in H^2(\delta)$, we can choose $a$, $b$ such
that $\Res(f-(\delta(\chi(\gamma))\gamma-1)a-(\delta(p)\varphi-1)b)=0$.
Then there exists an $f'$ such that
$\partial(f')=f-(\delta(\chi(\gamma))\gamma-1)a-(\delta(p)\varphi-1)b$. So $\partial(\bar{f'})=\bar{f}$.
This proves the surjectivity.

For $(2)$, we have that both
$\omega(\chi(\gamma))\chi(\gamma)^{-1}-1$ and $\omega(p)-1$ are zero. So
we can define a map $\Res: H^2(\omega)\rightarrow L$ by
$\Res(\bar{f})=\Res(f)$. We claim that it is an isomorphism. If
$\Res(\bar{f})=0$, then $\bar{f}$ is in the image of $\partial:
H^2(|x|)\rightarrow H^2(\omega)$. But $H^2(|x|)=0$ by Proposition 2.11, so $\bar{f}=0$. Therefore $\Res$ is injective.
Note that $\Res(\overline{1/T})=1$, so $\Res$ is also surjective.

For $(3)$, if $k<0$, then $H^2(x^{k})=0$ by proposition 2.11. If $k\in\mathbb{N}$,
then $\partial:H^2(x^{k-1})\rightarrow H^2(x^k),... ,\partial:H^2(1)\rightarrow H^2(x)$ and
$\partial:H^2(x^{-1})\rightarrow H^2(1)$ are all surjective by $(1)$.
So $H^2(x^k)=0$ since $H^2(x^{-1})=0$.
\end{proof}
\begin{corollary}Suppose $p$ is not equal to $2$. If $\delta=\omega x^k$ for $k\in \mathbb{N}$, then $H^2(\delta)$ is a
$1$-dimensional $L$-vector space generated by $\overline{\partial^k(1/T)}$. Otherwise, $H^2(\delta)=0$.
\end{corollary}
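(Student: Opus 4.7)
The statement naturally splits into two cases according to whether $\delta$ has the form $\omega x^k$ for some $k\in\mathbb{N}$, and in each case the strategy is to use Proposition 2.12 to transport the computation to a base case where the answer is already known (either Proposition 2.12(2) or Proposition 2.11).

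In the case $\delta=\omega x^k$, I would induct on $k$. The base case $k=0$ is exactly Proposition 2.12(2), which gives $H^2(\omega)=L\cdot\overline{1/T}$. For the inductive step, I need the map $\partial\colon H^2(\omega x^{k-1})\to H^2(\omega x^k)$ to be an isomorphism; by Proposition 2.12(1) this amounts to checking that $\omega x^k\neq\omega$ and $\omega x^k\neq x$ when $k\geq 1$. The first inequality is immediate. The second would force $\omega=x^{1-k}$; evaluating at $p\in\Q^{\times}$ yields $1=p^{1-k}$, forcing $k=1$, but then restriction to $\Z^{\times}$ gives the identity character on the left and the trivial character on the right, a contradiction. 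Composing the isomorphisms then gives $\partial^k\colon H^2(\omega)\to H^2(\omega x^k)$ an isomorphism sending the generator $\overline{1/T}$ to $\overline{\partial^k(1/T)}$, as desired.

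In the case $\delta\neq\omega x^k$ for every $k\in\mathbb{N}$, the plan is to push downward along $\partial$ until Proposition 2.11 can be invoked. For every integer $j\geq 0$ the hypothesis on $\delta$ gives $\delta x^{-j}\neq\omega$, so the surjectivity part of Proposition 2.12(1) provides a surjection $\partial\colon H^2(\delta x^{-j-1})\twoheadrightarrow H^2(\delta x^{-j})$. Composing, $\partial^n\colon H^2(\delta x^{-n})\twoheadrightarrow H^2(\delta)$ is surjective for every $n\geq 0$. Choosing $n$ large enough that $v_p(\delta(p))-n<0$, Proposition 2.11 forces $H^2(\delta x^{-n})=0$, whence $H^2(\delta)=0$.

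I do not expect any serious obstacle: the argument is essentially bookkeeping on top of Propositions 2.11 and 2.12. The only delicate point is the elementary character-theoretic verification that $\omega$ is not an integer power of $x$, which is what guarantees that the hypothesis $\delta\neq x$ in Proposition 2.12(1) is preserved throughout the inductive chain in the first case.
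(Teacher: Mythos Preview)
Your proposal is correct and follows essentially the same route as the paper: reduce via the chain of maps $\partial$ to a base case handled by Proposition~2.11 or Proposition~2.12(2). The only cosmetic difference is that for injectivity of $\partial$ along the chain $H^2(\omega)\to\cdots\to H^2(\omega x^k)$ the paper can simply invoke Proposition~2.12(3) (``$\partial$ is always injective''), whereas you verify the hypothesis $\omega x^k\neq x$ directly---both are fine.
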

\begin{proof}
This is an easy consequence of Propositions 2.11 and 2.12. In fact, for any $\delta$, we can find a $k_0\in\mathbb{N}$
such that $v_p(\delta x^{-k_0}(p))<0$. Then $H^2(\delta x^{-k_0})=0$. If $\omega$ does not appear in the sequence
$\delta x^{-k_0}$,..., $\delta x^{-1}$, $\delta$, then $H^2(\delta)=0$ by Proposition 2.12(1).
If $\omega$ appears, then $\delta=\omega x^k$ for some $k\in \mathbb{Z}$. If $k<0$, then $H^2(\delta)=0$ since
$v_p(\omega x^k(p))<0$. For $k\in\mathbb{N}$, by Proposition 2.12(2), $H^2(\omega )$ is generated by $\overline{1/T}$.
Repeatedly applying $(1)$ of Proposition 2.12, we get that $H^2(\omega x^k)$ is generated by $\overline{\partial^k(1/T)}$.
\end{proof}
\begin{corollary}
If $p>2$, then the Euler-Poincar\'e characteristic formula holds for all rank 1 $\m$-modules .
\end{corollary}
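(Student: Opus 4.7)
The plan is to deduce the formula as a direct bookkeeping consequence of the cohomology computations already assembled in this subsection. By the classification of Colmez recalled just before Proposition 2.10, every rank $1$ $\m$-module over $\r_L$ is isomorphic to $\r(\delta)$ for a unique continuous character $\delta:\Q^\times\to L^\times$, so it is enough to check that $\sum_{i=0}^{2}(-1)^i\dim_L H^i(\delta) = -1$ for every such $\delta$. Multiplying through by $[L:\Qp]$ converts this into Theorem 0.2(a) for $K=\Qp$, since $\r(\delta)$ has rank $[L:\Qp]$ as a $\m$-module over $\r_{\Qp}$ and $\dim_{\Qp}=[L:\Qp]\dim_L$.

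I would then split the characters $\delta$ into three disjoint and exhaustive families: the characters $x^{-i}$ with $i\in\mathbb{N}$, the characters $\omega x^i$ with $i\in\mathbb{N}$, and all remaining ``generic'' $\delta$. The first two families are disjoint because $\omega=x|x|$ and $|x|$ is not a power of $x$ in the character group of $\Q^\times$. Proposition 2.10 together with Corollary 2.13 then pins down the triple $(\dim_L H^0(\delta),\dim_L H^1(\delta),\dim_L H^2(\delta))$ as $(1,2,0)$ in the first family, $(0,2,1)$ in the second, and $(0,1,0)$ in the generic case. In all three cases the alternating sum equals $-1$, as required.

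Since the genuine analytic content (the computation of $H^2(\omega)$ and the transfer via $\partial$ among neighbouring characters) has already been carried out in Propositions 2.11 and 2.12, there is no serious obstacle here. The one subtlety worth flagging is the coefficient-field bookkeeping: the rank in the Euler-Poincar\'e formula must be measured over $\r_{\Qp}$ rather than over $\r_L$, so that both sides of the identity are compared at the same ground-field level. Once that factor of $[L:\Qp]$ has been absorbed, as in the first paragraph, the corollary follows formally from the case analysis.
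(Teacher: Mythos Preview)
Your proposal is correct and is precisely the argument the paper intends: Corollary 2.14 is stated without proof because it is a direct tally of the dimensions supplied by Proposition 2.10 and Corollary 2.13, exactly as you lay out. Your handling of the coefficient field $L$ versus $\Qp$ is also accurate and matches the conventions of this subsection.
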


\section{Generalized $\m$-modules}
In the rest of this paper, all $\m$-modules are over the Robba ring. For simplicity, we only consider the usual Robba ring without an additional coefficient field. However, it is easy to see that the same argument works to prove the results in the general case.

\subsection{Generalized $\m$-modules}
In this section we will investigate generalized $\m$-modules. Define
a \emph{generalized $\m$-module over $\r_K$} as a finitely presented
$\r_K$-module $D$ with commuting $\varphi, \Gamma$-actions such that
$\varphi^*D \to D$ is an isomorphism. Since $\r_K$ is a Bezout
domain (\cite[Proposition 4.6]{Cr98}), it is a coherent ring (i.e.
the kernel of any map between finitely presented modules is again
finitely presented), so the generalized $\m$-modules form an abelian
category. Define a \emph{torsion $(\phi, \Gamma)$-module} as a
generalized $(\varphi,\Gamma)$-module which is $\r_K$-torsion. We
say a torsion $\m$-module $S$ is a \emph{pure $t^k$-torsion
$\m$-module} if it is a free $\r_K/t^k$-module. For a generalized
$\m$-module $D$, its torsion part $S$ is a torsion $\m$-module and
$D/S$ is a $\m$-module. We define the rank of $D$ as the rank
of $D/S$.
\begin{proposition}
If $K=\Qp$, then a torsion $\m$-module $S$ is a successive extensions of pure $t$-torsion $\m$-modules.
\end{proposition}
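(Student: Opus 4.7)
The strategy is to first show that $S$ is annihilated by some power of the distinguished element $t\in\r_{\Qp}$ using that $\varphi(t)=pt$ and $\gamma(t)=\chi(\gamma)t$, and then peel off the $t$-torsion inductively. As a first step, since $\r_{\Qp}$ is Bezout and $S$ is finitely presented torsion, $\mathrm{Ann}_{\r_{\Qp}}(S)=(f)$ is a nonzero principal ideal. I verify this ideal is $(\varphi,\Gamma)$-stable: given $g\in\mathrm{Ann}(S)$ and $s\in S$, the surjectivity of $\varphi^{*}S\to S$ lets me write $s=\sum_{i}r_{i}\varphi(s_{i})$, and then $\varphi(g)\cdot s=\sum_{i}r_{i}\varphi(gs_{i})=0$, so $\varphi(g)\in\mathrm{Ann}(S)$; the analogous argument covers $\Gamma$. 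I then appeal to the key classification: every nonzero $(\varphi,\Gamma)$-stable principal ideal of $\r_{\Qp}$ is $(t^{N})$ up to units, for some $N\geq 0$. This comes from an orbit analysis of the zero multiset of $f$ in the open unit disk: $\varphi\colon T\mapsto(1+T)^{p}-1$ permutes $p$-power torsion points via $\zeta_{p^{n+1}}-1\mapsto\zeta_{p^{n}}-1$ and drives all other points toward $T=0$ (outside the annulus of definition), so any $(\varphi,\Gamma)$-stable discrete multiset must be supported on $\{\zeta_{p^{n}}-1:n\geq 1\}$ with a common constant multiplicity forced by $\varphi$-compatibility. Hence $f=u\cdot t^{N}$ and $t^{N}S=0$.

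Next I induct on $N$. Both $S[t]=\ker(t\cdot:S\to S)$ and $tS$ are $\m$-submodules (if $ts=0$ then $0=\varphi(ts)=pt\varphi(s)$, so $t\varphi(s)=0$, and similarly for $\gamma$). For $N\geq 1$, the quotient $S/S[t]$ is killed by $t^{N-1}$, so by induction is a successive extension of pure $t$-torsion modules, and the short exact sequence
\[
0\longrightarrow S[t]\longrightarrow S\longrightarrow S/S[t]\longrightarrow 0
\]
therefore reduces the proposition to showing that $S[t]$ itself is pure $t$-torsion, i.e.\ free over $\r_{\Qp}/t$.

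For this last step I use Fitting ideals. For any finitely presented $\r_{\Qp}$-module $M$, each Fitting ideal $\mathrm{Fitt}_{k}(M)$ is finitely generated, hence (Bezout) principal. The isomorphism $\varphi^{*}M\cong M$ together with the base-change formula $\mathrm{Fitt}_{k}(\varphi^{*}M)=\varphi(\mathrm{Fitt}_{k}(M))\,\r_{\Qp}$ implies that each $\mathrm{Fitt}_{k}(M)$ is $\varphi$-stable as a principal ideal; similarly for $\Gamma$. Applying the Step 1 classification, each $\mathrm{Fitt}_{k}(S[t])=(t^{e_{k}})$ for some $e_k\geq 0$. In the invariant-factor decomposition $S[t]\cong\bigoplus_{j=1}^{n}\r_{\Qp}/(d_{j})$ with $d_{1}\mid\cdots\mid d_{n}$ all non-units, one has $(d_{1})=\mathrm{Fitt}_{n-1}(S[t])=(t^{e_{n-1}})$; since $d_{1}\mid d_{n}\mid t$ (as $S[t]$ is killed by $t$) and $d_{1}$ is a non-unit, we must have $e_{n-1}=1$, so $d_{1}=t$ up to units. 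The divisibility chain then forces $d_{j}=t$ for every $j$, so $S[t]\cong(\r_{\Qp}/t)^{n}$ is free over $\r_{\Qp}/t$.

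The main obstacle is the zero-set/orbit analysis of the first step: one must carefully identify the unique $(\varphi,\Gamma)$-orbit structure of $p$-power torsion points in the open unit disk and rule out other stable discrete multisets, which is the geometric heart of the argument. Once this classification of $(\varphi,\Gamma)$-stable principal ideals of $\r_{\Qp}$ is in hand, the Fitting-ideal propagation and the inductive $S[t]$/$S/S[t]$ reduction are essentially formal.
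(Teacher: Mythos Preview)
Your proof is correct and rests on the same key lemma as the paper: a nonzero $(\varphi,\Gamma)$-stable principal ideal of $\r_{\Qp}$ equals $(t^k)$ for some $k$. The paper obtains this by citing \cite[Lemma I.3.2]{LB04b}, while you sketch the orbit analysis of zeros under $\varphi$; either way this is the crux.

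The organization differs. The paper applies the elementary-divisor decomposition directly to $S$, observes that the ideals $(r_i)=\mathrm{Ann}(e_i)$ are $(\varphi,\Gamma)$-stable by uniqueness of the Smith normal form (equivalently, by comparing $\varphi^*S\cong\bigoplus\r_{\Qp}/(\varphi(r_i))$ with $S$), concludes $(r_i)=(t^{k_i})$, and then filters by $0=t^{k_1}S\subset t^{k_1-1}S\subset\cdots\subset S$. You instead first pin down $\mathrm{Ann}(S)=(t^N)$, induct on $N$ via $0\to S[t]\to S\to S/S[t]\to 0$, and use Fitting ideals to show $S[t]$ is free over $\r_{\Qp}/t$. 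Your Fitting-ideal device is a pleasant alternative to invoking uniqueness of invariant factors, and in fact it already proves the whole thing without the induction: applying it to $S$ itself (not just $S[t]$) gives $\mathrm{Fitt}_k(S)=(t^{e_k})$ for all $k$, hence every elementary divisor of $S$ is a power of $t$, and the paper's $t^jS$-filtration finishes immediately. So the induction is a detour you can excise.

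One small point to tighten: to apply the inductive hypothesis and the Fitting argument you need $S[t]$ and $S/S[t]$ to be generalized $\m$-modules, i.e.\ that $\varphi^*(S[t])\to S[t]$ is an isomorphism (not merely that $S[t]$ is $\varphi$-stable). This follows because $\varphi^*$ is exact ($\r_{\Qp}$ is free of rank $p$ over itself via $\varphi$), so $\varphi^*(S[t])=\ker(\varphi^*(t\cdot))$, and under $\varphi^*S\cong S$ the map $\varphi^*(t\cdot)$ becomes multiplication by $pt$, whose kernel is again $S[t]$. You should state this, since multiplication by $t$ is not a morphism of $\m$-modules and one cannot simply invoke abelianness of the category.
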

\begin{proof}
From \cite[Proposition 4.12(5)]{LB02}, we can find a set of elements
$\{e_1$,...,$e_d\}$ of $S$ and principal ideals
$(r_1),(r_2),...,(r_d)$ of $\r_K$ such that $S=\oplus_{i=1}^d \r
e_i$, Ann$(e_i)=(r_i)$, and
$(r_1)\subset(r_2)\subset...\subset(r_d)$. Furthermore, these ideals
$(r_1),(r_2),...,(r_d)$ are unique. Therefore they are
$\Gamma$-invariant. Since $\r_K$ is a free $\r_K$-module via
$\varphi$, we have Ann$(1\otimes e_i)=$Ann$(e_i)=(r_i)$ in
$\varphi^*S$ for every $i$. Hence Ann$(\varphi(e_i))=(r_i)$ because
$\varphi^*S \to S$ is an isomorphism. This implies
$(\varphi(r_i))\subset$Ann$(\varphi(e_i))=(r_i)$.

We claim that if a principal ideal $I$ of $\r_{\Q}$ is stable under
$\varphi$ and $\Gamma$, then it is $(t^k)$ for some $k$. In fact,
from the proof of \cite[Lemma I.3.2]{LB04b}, since $I$ is stable
under $\varphi$ and $\Gamma$ it is generated by
$\prod_{n=1}^\infty(\varphi^{n-1}(q)/p)^{j_n}$ for a decreasing
sequence $\{j_n\}_n$. Therefore these $j_n$'s are eventually
constant, let $k\in\mathbb{N}$ denote this constant. This implies
$I=(t^k)$. So we conclude that $(r_i)=(t^{k_i})$ for every $i$, and
$\{k_i\}_i$ is decreasing. Then we can construct a filtration
$0=t^{k_1}S\subset t^{k_1-1}S\subset...\subset S$ of $S$ such that
each quotient is a pure $t$-torsion $\m$-module, hence the result.

\end{proof}

In case $K=\Qp$, for any pure $t^k$-torsion $\m$-module $S$, let $d=\rank_{\r/t^k}S$, and choose a basis
$\{e_1$,...,$e_d\}$ of $S$.
Let $A$ be the matrix of $\varphi$ in this basis. Since $\varphi^{*}S\cong S$, there is another matrix $B$ such that
$AB=BA=I_d$. Furthermore, since $\Gamma$ is topologically finite generated, we can choose an $r_0$ large enough
such that $A$, $B$ and the elements of $\Gamma$ have all entries lie in
$\btrig{,r_0}{,\Q}/(t^k)$
For $r\geq r_0$, set $S_r$ be the $\btrig{,r}{,\Q}/(t^k)$-submodule of $S$ spanned by $\{e_1$,...,$e_d\}$.
Then $\Gamma$ acts on $S_r$ and $\varphi:S_r\ra S_{pr}$ induces an isomorphism
\[
1\otimes\varphi:\btrig{,pr}{,\Q}/(t^k)\otimes_{\btrig{,r}{,\Q}/(t^k)}S_r\cong S_{pr}.
\]
Here we view $\btrig{,pr}{,\Q}/(t^k)$ as a $\btrig{,r}{,\Q}/(t^k)$ algebra via $\varphi$.

\begin{lemma}
For $r\geq p-1$, we have the following.
\begin{enumerate}
\item[(1)] The natural maps $\btrig{,r}{,\Q}/(t^{k})\rightarrow
\btrig{,r}{,\Q}/(\varphi^{n}(q^{k}))$ for $n\geq n(r)$ induce an isomorphism
\begin{center}
$\btrig{,r}{,\Q}/(t^{k})\cong \prod^{\infty}_{n\geq n(r)}
\btrig{,r}{,\Q}/(\varphi^{n}(q^{k}))$.
\end{center}

\item[(2)] If $n\geq n(r)$, the localization $\pi\mapsto
\eps^{(n)}e^{t/p^{n}}-1$ induces a $\Gamma$-equivariant
isomorphism from $\btrig{,r}{,\Q}/(\varphi^{n}(q^{k}))$ to
$\L[t]/(t^{k})$.

\item[(3)] For $r'\geq r$, $\btrig{,r}{,\Q}/(\varphi^{n}(q^{k}))\rightarrow
\btrig{,r'}{,\Q}/(\varphi^{n}(q^{k}))$ is the identity map via
the isomorphism of (2).

\item[(4)] The morphism $\varphi: \btrig{,r}{,\Q}/(t^k)\rightarrow \btrig{,pr}{,\Q}(t^k)$ can
be described via the isomorphism of (1) as follows:
$\varphi((x_{n})_{n\geq n(r)})=((y_{n})_{n\geq n(r)+1})$ where
$y_{n+1}=x_{n}$ for $n\geq n(r)$.
\end{enumerate}
\end{lemma}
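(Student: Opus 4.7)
The strategy is to establish a Weierstrass-style factorization of $t^k$ in $\btrig{,r}{,\Q}$ that matches the zeros of $t$ in the closed-open annulus $p^{-1/r}\leq|T|<1$, and then to invoke the Chinese Remainder Theorem to deduce (1). Parts (2)--(4) will then follow from the explicit form of the factorization together with the defining properties of the localization maps $\iota_n$.

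The key input is the product expansion $t=\pi\prod_{k\geq 0}\varphi^k(q)/p$, which telescopes from the identity $\varphi^n(\pi)=\pi\prod_{k=0}^{n-1}\varphi^k(q)$ together with the limit $t=\lim_{n\to\infty}\varphi^n(\pi)/p^n$ (valid since $(1+\pi)^{p^n}-1=e^{p^n t}-1=p^n t+O(p^{2n})$). The zero locus of $\varphi^k(q)$ on the open unit disk consists of the primitive $p^{k+1}$-th roots of unity minus one, all of absolute value $p^{-1/((p-1)p^k)}$. These zeros lie in the annulus $p^{-1/r}\leq|T|<1$ exactly for the range of $k$ indexed in the lemma; the remaining $\varphi^k(q)/p$ together with $\pi$ are units in $\btrig{,r}{,\Q}$. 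Hence $t^k$ in $\btrig{,r}{,\Q}$ is a unit times the product of the non-unit factors $\varphi^n(q^k)$, and these factors are pairwise coprime in the B\'ezout ring $\btrig{,r}{,\Q}$ because their zero loci are disjoint.

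For (1), the Chinese Remainder Theorem then produces the asserted isomorphism; the delicate point is that the product is infinite, so one has to work in the Fr\'echet topology of $\btrig{,r}{,\Q}$ and verify that the partial products of orthogonal idempotents (cutting out one factor at a time) converge, making the natural map a topological bijection. For (2), $\iota_n$ is localization at $\pi=\eps^{(n)}-1$, which is a zero of the factor attached to the $n$-th slot; since $t=\log(1+\pi)$ is a uniformizer there (its derivative is $(\eps^{(n)})^{-1}\neq 0$) and $\L$ is the residue field, $\iota_n$ identifies the quotient by the factor with $\L[t]/(t^k)$, and $\Gamma$-equivariance is inherited from the intertwining of $\iota_n$ with the $\Gamma$-action via the cyclotomic character. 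Part (3) is immediate from the naturality of $\iota_n$ under the inclusion $\btrig{,r}{,\Q}\hookrightarrow\btrig{,r'}{,\Q}$. For (4), the identity $\varphi(\varphi^n(q^k))=\varphi^{n+1}(q^k)$ together with $\iota_n=\iota_{n+1}\circ\varphi$ show that $\varphi$ carries the $n$-th factor of $\btrig{,r}{,\Q}/(t^k)$ into the $(n+1)$-th factor of $\btrig{,pr}{,\Q}/(t^k)$, and the shift of starting index from $n(r)$ to $n(pr)=n(r)+1$ matches the codomain.

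The main obstacle is the Fr\'echet-convergence argument underpinning the infinite Chinese Remainder decomposition --- one needs to control partial products and truncated idempotents simultaneously on every closed sub-annulus of $p^{-1/r}\leq|T|<1$. Once that technical point is secured, the remaining assertions reduce to routine direct computations from the explicit formula for $\iota_n$ and the product factorization of $t$.
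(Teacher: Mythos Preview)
The paper does not give its own proof of this lemma; it simply refers the reader to \cite[Lemma 3.15]{C05}. So there is no in-paper argument to compare against.

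Your outline is the standard approach and is essentially what Colmez does in the cited reference: factor $t$ via the product formula $t=\pi\prod_{m\geq 0}(\varphi^m(q)/p)$, separate the factors $\varphi^m(q)$ whose zeros lie in the annulus $p^{-1/r}\leq|T|<1$ from those that are units there, and then apply a Chinese Remainder decomposition to the resulting coprime factorization of $t^k$. You have correctly identified the one genuinely delicate point, namely making the \emph{infinite} CRT rigorous in the Fr\'echet topology of $\btrig{,r}{,\Q}$; once that is secured, parts (2)--(4) are direct computations with the explicit formula for $\iota_n$ and the relation $\iota_n=\iota_{n+1}\circ\varphi$, exactly as you indicate. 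One small point worth tightening when you write this up in full: be careful with the precise correspondence between the index $n$ in $\varphi^n(q)$ and the localization $\iota_n$ at $\eps^{(n)}-1$, since the conventions in the literature differ by a shift and it is easy to introduce an off-by-one error.
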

\begin{proof}
 See \cite[Lemma 3.15]{C05}.
\end{proof}
Using $(2)$ of Lemma 3.2, for $n\geq n(r)$, we set $S^n=S_r\otimes_{\btrig{,r}{}/(t^k)} \L[t]/(t^k)$.
 Then $S^n$ is a free $\L/(t^k)$-module of rank $d$ with $\Gamma$-action.
The injective map $\varphi:S^n\ra S^{n+1}$  induces an isomorphism
\[
1\otimes\varphi:\mathbb{Q}_p(\varepsilon^{(n+1)})[t]/(t^k)\otimes_{\L[t]/(t^k)}S^n\cong S^{n+1}.
\]
It allows us to regard $S^n$ as a submodule of $S^{n+1}$.
\begin{theorem} With notations as above, the following are true.
\begin{enumerate}
\item[(1)]The natural maps $S_r\rightarrow S^n$
for $n\geq n(r)$ induce $S_r\cong \prod^{\infty}_{n\geq n(r)}S^n$ as
$(\mathbb{Q}_p(\varepsilon^{(n(r))})[t]/(t^k))[\Gamma]$-modules.\

\item[(2)]For $r'\geq r$, under the isomorphism of (1), the natural map
$S_r\rightarrow S_{r'}$ is $((x_{n})_{n\geq n(r)})\mapsto((x_{n})_{n\geq n(r')})$.\

\item[(3)]Under the isomorphism of (1), $\varphi:S_r\rightarrow
S_{pr}$ is $(x_{n})_{n\geq n(r)}\mapsto((y_{n})_{n\geq n(r)+1})$,
where $y_{n+1}=x_{n}$ for $n\geq n(r)$.\
\end{enumerate}
\end{theorem}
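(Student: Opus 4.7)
The plan is to deduce Theorem 3.3 from Lemma 3.2 by exploiting the freeness of $S_r$ over $\btrig{,r}{,\Q}/(t^k)$. By construction $S_r = \bigoplus_{i=1}^d (\btrig{,r}{,\Q}/(t^k))\, e_i$, and having chosen $r \geq r_0$ the matrix $A$ of $\varphi$ together with the matrices of the topological generators of $\Gamma$ all have entries in $\btrig{,r}{,\Q}/(t^k)$. Each of the three assertions then becomes the corresponding part of Lemma 3.2 applied coordinate-wise in the basis $\{e_1, \ldots, e_d\}$.

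For (1), parts (1) and (2) of Lemma 3.2 combine to yield a $\Gamma$-equivariant ring decomposition
\[
\btrig{,r}{,\Q}/(t^k) \;\cong\; \prod_{n \geq n(r)} \mathbb{Q}_p(\varepsilon^{(n)})[t]/(t^k).
\]
Tensoring with the free basis $\{e_1, \ldots, e_d\}$ and recalling that $S^n = S_r \otimes_{\btrig{,r}{,\Q}/(t^k)} \mathbb{Q}_p(\varepsilon^{(n)})[t]/(t^k)$ is free of rank $d$, interchanging the direct sum with the product gives $S_r \cong \prod_{n \geq n(r)} S^n$. The $\Gamma$-equivariance is automatic because the matrix of each $\gamma$, viewed in $\mathrm{GL}_d(\btrig{,r}{,\Q}/(t^k))$, decomposes under the ring decomposition into its reductions in $\mathrm{GL}_d(\mathbb{Q}_p(\varepsilon^{(n)})[t]/(t^k))$, which realize the $\Gamma$-action on each $S^n$.

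For (2), the map $S_r \to S_{r'}$ is extension of scalars along $\btrig{,r}{,\Q}/(t^k) \hookrightarrow \btrig{,r'}{,\Q}/(t^k)$. By Lemma 3.2(3) this is the identity on each common factor $\mathbb{Q}_p(\varepsilon^{(n)})[t]/(t^k)$ with $n \geq n(r')$, so on the product decomposition it reads as the truncation $(x_n)_{n \geq n(r)} \mapsto (x_n)_{n \geq n(r')}$.

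For (3), Frobenius semi-linearity yields $\varphi(\sum a_i e_i) = \sum \varphi(a_i)\, \varphi(e_i)$ with $\varphi(e_i) = \sum_j A_{ji} e_j$. By Lemma 3.2(4), $\varphi$ on the scalar ring shifts indices: the $(n{+}1)$-st component of $\varphi(a)$ equals the $n$-th component of $a$ (viewed in $\mathbb{Q}_p(\varepsilon^{(n+1)})[t]/(t^k)$ via the natural inclusion). Combining these, the $(n{+}1)$-st component of $\varphi(x)$ in $S^{n+1}$ works out to $\sum_{i,j} a_i^{(n)} A_{ji}^{(n+1)} e_j^{(n+1)}$, which is exactly the image of $x_n = \sum_i a_i^{(n)} e_i^{(n)}$ under the inclusion $1\otimes\varphi: S^n \hookrightarrow S^{n+1}$ singled out in the paragraph preceding the theorem. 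I expect the main obstacle to be purely bookkeeping: one must track the product decomposition coherently through both the scalar shift and the semi-linear matrix $A$, but this is forced once $r_0$ is chosen so that $A$, $A^{-1}$, and all required $\gamma$-matrices lie in $\btrig{,r_0}{,\Q}/(t^k)$.
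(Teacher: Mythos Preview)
Your proposal is correct and follows essentially the same approach as the paper: both deduce each part of the theorem from the corresponding part of Lemma 3.2 by using that $S_r$ is free of finite rank over $\btrig{,r}{,\Q}/(t^k)$, so that tensoring with the ring decomposition of Lemma 3.2(1)--(2) yields the product decomposition of $S_r$, and parts (2) and (3) then follow from Lemma 3.2(3) and (4) respectively. Your write-up is somewhat more explicit about the bookkeeping (the interchange of the finite direct sum with the infinite product, and tracking the matrix $A$ through the shift in (3)), but the underlying argument is the same.
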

\begin{proof}
For $(1)$, we have
\begin{eqnarray*}
S_r&=&S_r\otimes_{\btrig{,r}{,\Q}}\btrig{,r}{,\Q}/(t^k)  \\
&=&S_r\otimes_{\btrig{,r}{,\Q}} \prod^{\infty}_{n\geq n(r)}\btrig{,r}{,\Q}/((\varphi^{n}(q))^k) \qquad(\text{by (1) of Lemma 3.2})\\
&=&\prod^{\infty}_{n\geq n(r)}S_r\otimes_{\btrig{,r}{,\Q}}\btrig{,r}{,\Q}/((\varphi^{n}(q))^k)\\
&=&\prod^{\infty}_{n\geq n(r)}S_r\otimes_{\L[t]}\L[t]/(t^k) \qquad(\text{by (2) of Lemma 3.2})\\
&=&\prod^{\infty}_{n\geq n(r)}S^n.\\
\end{eqnarray*}
Then (2) and (3) follow from (3) and (4) of Lemma 3.2 respectively.
\end{proof}

The natural
examples of torsion $\m$-modules are \emph{quotient $\m$-modules} which are of the forms $D/E$,
here $E\subset D$ are two $\m$-modules of the same rank.
For sufficiently large $r$, we have
$E_{r}\subset D_{r}$. For $n\geq n(r)$, localizing at
$\eps^{(n)}-1$, we get $D^{+,n}_{\dif}(E)\subset
D^{+,n}_{\dif}(D)$. It is natural to view the quotient
$D^{+,n}_{\dif}(D)/ D^{+,n}_{\dif}(E)$ as the localization at
$\eps^{(n)}-1$ of the quotient $\m$-module $D/E$. The connecting map $\varphi_{n}$ of $D$ and $E$ induces a connecting map
\[
\varphi_n: D^{+,n}_{\dif}(D)/ D^{+,n}_{\dif}(E)\ra D^{+,n}_{\dif}(D)/ D^{+,n}_{\dif}(E)
\] of $D/E$ such that
\[
\varphi_{n}\otimes 1: D^{+,n}_{\dif}(D)/ D^{+,n}_{\dif}(E)\otimes_{\L[t]}\mathbb{Q}_p(\varepsilon^{(n+1)})[t]/(t^k)\ra
D^{+,n}_{\dif}(D)/ D^{+,n}_{\dif}(E)
\]
is an isomorphism. Note that if we apply the proof of Theorem 3.2 to $D/E$
by replacing $S_r$ by $D_r/E_r$ and $S^n$ by $D^{+,n}_{\dif}(D)/ D^{+,n}_{\dif}(E)$, then we get the following formulas
which might be useful for other purposes.
\begin{proposition}Suppose $K=\Qp$. With notation as above, the following are true.
\begin{enumerate}
\item[(1)]The localization maps $D_{r}/E_{r}\rightarrow D^{+,n}_{\dif}(D)/ D^{+,n}_{\dif}(E)$
induce an isomorphism
\begin{center}
$D_{r}/E_{r}\cong \prod^{\infty}_{n\geq n(r)}D^{+,n}_{\dif}(D)/ D^{+,n}_{\dif}(E)$
\end{center}
as $(\mathbb{Q}_p(\varepsilon^{(n(r))})[t]/(t^k))[\Gamma]$ modules.
\item[(2)]For $r'\geq r$, under the isomorphism of (1), the natural map
$D_{r}/E_{r}\rightarrow D_{r'}/E_{r'}$ is $((x_{n})_{n\geq n(r)})\mapsto((x_{n})_{n\geq n(r')})$.

\item[(3)]Via the isomorphism of (1), $\varphi: D_{r}/E_{r}\rightarrow
D_{pr}/E_{pr}$ is $((x_{n})_{n\geq n(r)})\mapsto((y_{n})_{n\geq n(r)+1})$,
where $y_{n+1}=x_{n}$ for $n\geq n(r)$.
\end{enumerate}
\end{proposition}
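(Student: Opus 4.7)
The plan is to follow the hint in the preceding paragraph: run the proof of Theorem 3.3 verbatim, with $S_r$ replaced by $D_r/E_r$ and $S^n$ replaced by $D^{+,n}_{\dif}(D)/D^{+,n}_{\dif}(E)$, for $r$ sufficiently large. The only ingredient genuinely new beyond Theorem 3.3 is that we must first know that $D_r/E_r$ is annihilated by a fixed power $t^k$ of $t$.

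This annihilation follows from Proposition 3.1. Since $\rank D = \rank E$, the quotient $D/E$ is a finitely generated torsion $\m$-module over $\r_{\Q}$, hence by Proposition 3.1 a successive extension of pure $t$-torsion $\m$-modules, hence killed by $t^k$ for some $k$. Since $t\in\btrig{,r}{,\Q}$ for every $r>0$ and $E_r=E\cap D_r$ once $r$ is large enough (using the uniqueness clause in \cite[Theorem I.3.3]{LB04}), the relation $t^kD\subset E$ descends to $t^kD_r\subset E_r$ for $r\gg 0$.

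With this in hand, part (1) is proved by the same chain of isomorphisms as in Theorem 3.3:
\begin{align*}
D_r/E_r &= (D_r/E_r)\otimes_{\btrig{,r}{,\Q}}\btrig{,r}{,\Q}/(t^k)\\
&\cong (D_r/E_r)\otimes_{\btrig{,r}{,\Q}}\prod_{n\geq n(r)}\btrig{,r}{,\Q}/(\varphi^n(q)^k)\\
&\cong \prod_{n\geq n(r)}\bigl((D_r/E_r)\otimes_{\btrig{,r}{,\Q}}\btrig{,r}{,\Q}/(\varphi^n(q)^k)\bigr)\\
&\cong \prod_{n\geq n(r)}D^{+,n}_{\dif}(D)/D^{+,n}_{\dif}(E).
\end{align*}
The second isomorphism is Lemma 3.2(1). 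The third uses that $D_r/E_r$ is finitely presented over the coherent ring $\btrig{,r}{,\Q}$, so tensor product commutes with arbitrary direct products. The fourth uses Lemma 3.2(2) together with right exactness of tensor product applied to $0\to E_r\to D_r\to D_r/E_r\to 0$: since $D_r$ and $E_r$ are free over $\btrig{,r}{,\Q}$, their base changes along $\iota_n$ compute $D^{+,n}_{\dif}(D)$ and $D^{+,n}_{\dif}(E)$ respectively, and the localization of the quotient becomes the quotient of the localizations. Parts (2) and (3) then fall out from parts (3) and (4) of Lemma 3.2 respectively, because the transition map $D_r/E_r\to D_{r'}/E_{r'}$ and the Frobenius $\varphi: D_r/E_r\to D_{pr}/E_{pr}$ are induced from the corresponding maps on $D_r$, which are computed componentwise by Lemma 3.2.

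The only subtle point to keep an eye on is the third isomorphism, where tensor product has to commute with an infinite product; this is what forces the finite presentation step and is the one place where the argument is not automatic as in Theorem 3.3 (there $S_r$ was itself free over $\btrig{,r}{,\Q}/(t^k)$). Everything else is a formal transcription.
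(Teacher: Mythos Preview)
Your proposal is correct and follows exactly the approach the paper indicates: the paper itself gives no separate proof of this proposition but simply says to rerun the proof of Theorem~3.3 with $S_r$ replaced by $D_r/E_r$ and $S^n$ by $D^{+,n}_{\dif}(D)/D^{+,n}_{\dif}(E)$. Your write-up fills in two points the paper leaves implicit---that $D_r/E_r$ is killed by some $t^k$ (via Proposition~3.1) and that commuting the tensor product with the infinite product requires finite presentation of $D_r/E_r$ rather than freeness---both of which are genuine and correctly handled.
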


\subsection{Cohomology of Generalized $\m$-modules}
We also use Herr's complex to define the cohomology of generalized
$\m$-modules. If $L$ is a finite extension of $K$, and $D$ is a
generalized $\m$-module over $\r_{L}$, then we define the
\emph{induced $\m$-module of $D$} from $L$ to $K$ in the same way as
for $\m$-modules as in the beginning of section 2.2 and also denote
it by $\Ind_{L}^{K}D$.
\begin{theorem}(Shapiro's Lemma for generalized $\m$-modules)
Suppose $D$ is a $\m$-module over $\r_{L}$. Then there are
isomorphisms
\[
H^i(D)\cong H^i(\Ind_{L}^KD)   \qquad (i=0,1,2)
\]
which are functorial in $D$ and compatible with cup products.
\end{theorem}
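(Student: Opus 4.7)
The plan is to recycle verbatim the argument used for Theorem 2.2, observing that none of the steps there use anything beyond the abstract $(\varphi,\Gamma)$-module structure (over $\r_L$) and the formal behavior of the induction functor. In particular, freeness or etaleness of $D$ plays no role, so the argument applies to arbitrary generalized $(\varphi,\Gamma)$-modules.

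First I would reduce to the case that both $\Gamma_K$ and $\Gamma_L$ are procyclic by the same trick used at the end of the proof of Theorem 2.2, replacing $\Gamma_K$ by $\Gamma_K/\triangle_K$ and $D$ by $D^{\triangle_K}$; the averaging idempotent $p_{\triangle_K}$ remains well-defined since $|\triangle_K|$ is prime to $p$ (for $p>2$; otherwise $\triangle_K$ is trivial or of order $2$). Once we are in the procyclic case, pick topological generators $\gamma_K$ and $\gamma_L=\gamma_K^m$ of $\Gamma_K$ and $\Gamma_L$ respectively, where $m=[\Gamma_K:\Gamma_L]$, and define $Q\colon D\to\Ind_L^K D$ by $(Q(x))(e)=x$, $(Q(x))(\gamma_K^i)=0$ for $1\le i\le m-1$, together with $\widetilde{Q}=\sum_{i=0}^{m-1}Q^{\gamma_K^i}$. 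These formulas make sense for any $\r_L$-module with commuting $\varphi,\Gamma_L$-action, so they apply to generalized $(\varphi,\Gamma)$-modules.

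Next I would verify, exactly as in Theorem 2.2, that $\widetilde{Q}$ induces a $\varphi$-equivariant isomorphism $D^{\Gamma_L}\cong(\Ind_L^K D)^{\Gamma_K}$, and that $Q$ induces a $\varphi$-equivariant isomorphism $D/(\gamma_L-1)\cong(\Ind_L^K D)/(\gamma_K-1)$. Both checks are purely formal telescoping computations on functions $\Gamma_K\to D$; they do not invoke freeness of $D$, its torsion-freeness, or any property of $\r_L$ beyond being the coefficient ring, so they carry over without change to the generalized setting. With these two isomorphisms in hand, the commutative diagram of Herr complexes
\[
\xymatrix{
C^{\bullet}_{\varphi,\gamma_L}(D):0\ar[r] & D\ar^{\widetilde{Q}}[d]\ar[r] & D\oplus D\ar^{Q\oplus\widetilde{Q}}[d]\ar[r] & D\ar^{Q}[d]\ar[r] & 0\\
C^{\bullet}_{\varphi,\gamma_K}(\Ind_L^K D):0\ar[r] & \Ind_L^K D\ar[r] & \Ind_L^K D\oplus\Ind_L^K D\ar[r] & \Ind_L^K D\ar[r] & 0
}
\]
produces comparison maps $\alpha^i\colon H^i(D)\to H^i(\Ind_L^K D)$ which are isomorphisms on $H^0$ and $H^2$ directly, and on $H^1$ by the Five Lemma applied to the standard short exact sequence $0\to D^{\Gamma_L}/(\varphi-1)\to H^1(D)\to(D/(\gamma_L-1))^{\varphi=1}\to 0$ and its analogue for $\Ind_L^K D$.

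Since each $\alpha^i$ is constructed from the natural maps $Q,\widetilde{Q}$, functoriality in $D$ and compatibility with cup products are automatic from the explicit formulas for cup products given in Section 2.1. I do not anticipate a genuine obstacle: the whole point is that the earlier proof never used the free module or etale structure, only the underlying $(\varphi,\Gamma)$-module operations, and the only mild annoyance is bookkeeping for the case when $\Gamma_K$ or $\Gamma_L$ has nontrivial torsion part, handled identically to Theorem 2.2 by passing to the $\triangle$-invariants first.
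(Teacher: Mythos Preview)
Your proposal is correct and matches the paper's approach exactly: the paper's proof of this theorem consists of the single sentence ``The proof is the same as the proof of Theorem 2.2,'' and you have spelled out precisely why that argument carries over unchanged to the generalized setting.
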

\begin{proof}The proof is the same as the proof of Theorem 2.2.
\end{proof}

Suppose $\eta: \Zp^{*}\rightarrow \OO_{L}$ is a character of finite
order with conductor $p^{N(\eta)}$ ($N(\eta)=0$ if $\eta=1$;
otherwise it is the smallest integer $n$ such that $\eta$ is trivial
on $1+p^{n}\Zp$). We define the Gauss sum $G(\eta)$ associated to
$\eta$ by $G(\eta)=1$ if $\eta=1$, otherwise
\[
 G(\eta)=\sum_{x\in (\ZZ/p^{N(\eta)}\ZZ)^{*}} \eta(x)\mu_{p^{N(\eta)}}^{x} \in
L_{N(\eta)}^{\times}.
\]
\begin{lemma}
Let $k$ be a positive integer.
\begin{enumerate}
 \item[(1)] If $\eta: \Zp^{*}\rightarrow \OO_{L}$ is of finite order and $0\leq
i\leq k-1$, then $g(G(\eta)t^{i})=(\eta^{-1}\chi^{i})(g)\cdot(G(\eta)t^{i})$ for
every $g\in \Gamma$.
 \item[(2)]For every $n\in \mathbb{N}$, we have
$\L[t]/t^{k}=\oplus_{\eta, N(\eta)\leq n}\oplus_{0\leq i\leq
k-1}\Qp\cdot G(\eta)t^{i}$.
 \end{enumerate}
\end{lemma}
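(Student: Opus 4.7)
For (1), the plan is a direct computation from the definitions. Since $t = \log[\varepsilon]$ and $g(\varepsilon) = \varepsilon^{\chi(g)}$, every $g \in \Gamma$ satisfies $g(t) = \chi(g)\, t$ and hence $g(t^i) = \chi(g)^i\, t^i$. For $\eta \neq 1$, applying $g$ to the Gauss sum gives
\[
g(G(\eta)) = \sum_{x \in (\ZZ/p^{N(\eta)}\ZZ)^*} \eta(x)\, \mu_{p^{N(\eta)}}^{\chi(g) x},
\]
and the change of variables $y = \chi(g) x$ rewrites this as $\eta^{-1}(\chi(g))\, G(\eta)$; the case $\eta = 1$ is immediate since $G(1) = 1$. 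Multiplying the two identities yields $g(G(\eta) t^i) = (\eta^{-1} \chi^i)(g) \cdot G(\eta) t^i$.

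For (2), I would first match $\Qp$-dimensions on both sides: the LHS has dimension $k[\L : \Qp] = k \cdot |(\ZZ/p^n\ZZ)^*|$ (with the usual convention $|(\ZZ/\ZZ)^*| = 1$ when $n=0$), and the RHS has exactly as many summands, since primitive characters of $\Zp^*$ with $N(\eta) \leq n$ are exactly the characters of $(\ZZ/p^n\ZZ)^*$. It thus suffices to show that $\{G(\eta)\, t^i : N(\eta) \leq n,\, 0 \leq i \leq k-1\}$ is $\Qp$-linearly independent in $\L[t]/t^k$. Since $\{1, t, \ldots, t^{k-1}\}$ is an $\L$-basis of $\L[t]/t^k$, any $\Qp$-linear relation decouples by power of $t$ into relations $\sum_\eta c_{\eta, i}\, G(\eta) = 0$ in $\L$, and the problem reduces to showing that $\{G(\eta) : N(\eta) \leq n\}$ is $\Qp$-linearly independent in $\L$.

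For this last step, I would exploit the action of $\Gal(\L/\Qp) \cong (\ZZ/p^n\ZZ)^*$. Exactly as in part (1), for $\sigma_a : \mu_{p^n} \mapsto \mu_{p^n}^a$ one checks $\sigma_a(G(\eta)) = \eta^{-1}(a)\, G(\eta)$, where $\eta^{-1}$ is viewed as a character of $(\ZZ/p^n\ZZ)^*$ by inflation. Since distinct primitive characters of conductor dividing $p^n$ inflate to distinct characters, and since $G(\eta) \in L_{N(\eta)}^{\times}$ is nonzero by construction, the standard orthogonality trick---apply $\sum_a \eta_0(a)\, \sigma_a$ to a hypothetical relation $\sum_\eta c_\eta G(\eta) = 0$---extracts $|(\ZZ/p^n\ZZ)^*| \cdot c_{\eta_0}\, G(\eta_0) = 0$ and hence forces $c_{\eta_0} = 0$ for every $\eta_0$.

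The only conceptual subtlety is that Gauss sums for characters of different conductors live in different subfields of $\L$, so one has to verify that their Galois eigencharacters remain pairwise distinct after inflation to $(\ZZ/p^n\ZZ)^*$; this reduces to the injectivity of inflation on the set of primitive characters, which is immediate from the fact that a character of conductor $p^{N}$ does not factor through $(\ZZ/p^{N-1}\ZZ)^*$. I don't anticipate a genuine obstacle beyond carefully handling this conductor bookkeeping.
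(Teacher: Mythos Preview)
Your argument is correct. Part~(1) is the obvious calculation, and for part~(2) your combination of a dimension count with the observation that the $G(\eta)$ are nonzero eigenvectors for the $\Gal(\L/\Qp)$-action with pairwise distinct eigencharacters is the standard way to establish such a decomposition. The only place to be slightly more careful is the orthogonality step: the averaging operator $\sum_a \eta_0(a)\,\sigma_a$ has coefficients in the field generated by the character values, not in $\Qp$, so strictly speaking you should first extend scalars before applying it (or, equivalently, use the ``shortest relation'' version of the eigenvector-independence argument, which needs no coefficients outside $\Qp$). Either way the conclusion $c_{\eta_0}=0$ follows.

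As for comparison with the paper: there is nothing to compare. The paper does not prove this lemma; it simply cites \cite[Prop.~3.13]{C05}. Your write-up is exactly the kind of self-contained verification one would supply in place of that citation.
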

\begin{proof}
See \cite[Prop 3.13] {C05}.
\end{proof}
\begin{theorem}

Suppose $S$ is a torsion $\m$-module. Then we have the following.
\begin{enumerate}
\item[(1)]$\dim_{\Qp}H^{0}(S)=\dim_{\Qp}H^{1}(S)<\infty$;
\item[(2)]$\varphi-1$ is surjective on $S$, and therefore $H^{2}(S)=0$.
\end{enumerate}
\end{theorem}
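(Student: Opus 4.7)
The plan is to reduce via Shapiro's lemma and Proposition~3.1 to the case of a pure $t$-torsion $\m$-module over $\r_{\Qp}$, then to analyze that case explicitly using the product decomposition of Theorem~3.3.

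First, Shapiro's lemma (Theorem~3.5) allows us to assume $K = \Qp$, and Proposition~3.1 then produces a finite filtration of $S$ by $\m$-submodules whose successive quotients are pure $t$-torsion. I claim both conclusions propagate through a short exact sequence $0 \to S' \to S \to S'' \to 0$: a routine diagram chase shows that surjectivity of $\varphi-1$ on $S'$ and $S''$ forces surjectivity on $S$, giving $H^2(S)=0$; the long exact sequence of Herr cohomology then terminates at $H^2=0$ and takes the form
\[
0 \to H^0(S') \to H^0(S) \to H^0(S'') \to H^1(S') \to H^1(S) \to H^1(S'') \to 0,
\]
yielding finite-dimensionality of $H^0(S), H^1(S)$ together with Euler-characteristic additivity $\chi(S) = \chi(S')+\chi(S'')=0$. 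Combined with $H^2(S)=0$, this gives $\dim H^0(S)=\dim H^1(S)$. So it suffices to treat pure $t$-torsion $S$.

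Now assume $S$ is pure $t$-torsion over $\r_{\Qp}$. Theorem~3.3 with $k=1$ supplies a $\Gamma$-equivariant identification $S_r \cong \prod_{n \geq n(r)} S^n$, where each $S^n$ is a finite-dimensional $\Qp(\varepsilon^{(n)})$-vector space of rank $d$, and $\varphi$ acts as the shift determined by the injective semilinear connecting maps $\varphi_n \colon S^n \to S^{n+1}$. For part~(2), given a representative $(z_n)_{n \geq n_0}$ of an element of $S$, I will solve the recursion $x_{n+1} = \varphi_n(x_n) - z_{n+1}$ starting from an arbitrary $x_{n_0}$ to produce an $x$ with $(\varphi-1)x=z$; this establishes surjectivity of $\varphi-1$ and hence $H^2(S)=0$.

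For part~(1), once $\varphi-1$ is surjective, a short chase in the Herr complex identifies
\[
H^0(S) = (S^{\varphi=1})^{\gamma=1}, \qquad H^1(S) = S^{\varphi=1}/(\gamma-1)S^{\varphi=1}.
\]
The elements of $S^{\varphi=1}$ are the eventually Frobenius-compatible sequences, parameterized by their terminal value, so $S^{\varphi=1} = \varinjlim_N S^N$ along the $\varphi_N$. This space is generally infinite-dimensional over $\Qp$, and the main obstacle is to show that $\gamma-1$ on $S^{\varphi=1}$ has finite-dimensional kernel and cokernel of equal dimension. To handle this I plan to decompose $S^{\varphi=1}$ into $\Gamma$-isotypic components (or reduce to the rank~$1$ case via the slope filtration applied to a lift of $S$), and then match dimensions on each piece using Colmez's explicit rank-$1$ computations (Propositions~2.10 and~2.12).
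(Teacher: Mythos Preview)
Your reduction steps and treatment of part~(2) are correct and match the paper's argument closely. The identification $H^0(S)=(S^{\varphi=1})^{\gamma=1}$ and $H^1(S)=S^{\varphi=1}/(\gamma-1)S^{\varphi=1}$ is also correct once $\varphi-1$ is known to be surjective.

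The gap is in your plan for part~(1). Neither of your proposed routes works as stated. Decomposing the infinite-dimensional $S^{\varphi=1}=\varinjlim_N S^N$ into ``$\Gamma$-isotypic components'' is not meaningful here: $\Gamma$ is a $p$-adic Lie group, the action on each $S^N$ is $K_N$-semilinear rather than linear, and there is no finite isotypic decomposition to appeal to. The alternative of lifting $S$ to a $(\varphi,\Gamma)$-module $D$ over $\r_{\Qp}$ and applying the slope filtration does not reduce to rank~$1$: the slope filtration produces pure pieces, not rank-$1$ pieces, and in any case it is not clear that such a lift exists. Finally, Propositions~2.10 and~2.12 concern free $(\varphi,\Gamma)$-modules $\r(\delta)$, are restricted to $p>2$, and do not directly compute cohomology of torsion modules; invoking them would be circular or would require a further reduction you have not supplied.

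The paper's argument avoids the infinite-dimensional space $S^{\varphi=1}$ entirely and works instead at each finite level. One writes $H^0(S)=\varinjlim_n (S^n)^{\Gamma}$ and $H^1(S)=\varinjlim_n (S^n)'/(\gamma-1)$; since each $(S^n)'$ is finite-dimensional over $\Qp$, one has $\dim_{\Qp}(S^n)^{\Gamma}=\dim_{\Qp}(S^n)'/(\gamma-1)$ automatically. The remaining work is to show these direct systems stabilize at a common value, for which two ingredients suffice: the transition maps $(S^n)'/(\gamma-1)\to(S^{n+1})'/(\gamma-1)$ are injective because $S^n$ is a $\Gamma$-module direct summand of $S^{n+1}$ (this follows from the Gauss-sum decomposition of $\Qp(\varepsilon^{(n+1)})[t]/(t^k)$ in Lemma~3.6), and the dimensions $\dim_{\Qp}(S^{n+s})^{\Gamma}$ are bounded independently of $s$ by $\dim_{\Qp}S^n$, again using Lemma~3.6 to decompose $S^{n+s}$ as a sum of character twists of $S^n$. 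This is the missing idea you should supply in place of the isotypic/lift argument.
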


\begin{proof}By Theorem 3.5, we first reduce the theorem to the case $K=\Qp$. Suppose we are given a short exact sequence $0\rightarrow S'\rightarrow S\rightarrow S''\rightarrow 0$ of torsion $\m$-modules and the theorem holds for $S'$ and $S''$. Then $(2)$ also holds for $S$ by Five Lemma. From the long exact sequence of
cohomology we get
\[
 0\rightarrow H^0(S')\rightarrow H^{0}(S)\rightarrow H^{0}(S'')\rightarrow H^{1}(S')\rightarrow H^1(S)
 \rightarrow H^1(S'')\rightarrow 0.
\]
Then we see that $\dim_{\Qp}H^{0}(S)=\dim_{\Qp}H^{1}(S)<\infty$ since $\dim_{\Qp}H^{0}(S_i)=\dim_{\Qp}H^{1}(S_i)<\infty$ for $i=1$, $2$.

So conditions $(1)$ and $(2)$ are preserved by extensions.
By Proposition 3.1 we only need to treat the case in which $S$ is a pure $t^k$-torsion $\m$-module.
We claim that the map $\varphi-1:S_r\rightarrow S_{pr}$ is surjective for any $r\geq p-1$. This will prove $(2)$,
since $S$ is the union of $S_r$'s. In fact, for any $((y_{n})_{n\geq n(r)+1})\in S_{pr}$,
if we let $x_n=-\sum_{i=n(r)}^{n}y_i$ for $n\geq n(r)$, where we put $y_{n(r)}=0$, then we have
$(\varphi-1)((x_n)_{n\geq n(r)})=((y_{n})_{n\geq n(r)+1})$ by $(2)$ and $(3)$ of Theorem 3.3.

For $(1)$, we set $S_r'=S_r^{\triangle_K}$ and $(S^n)'=(S^n)^{\triangle_K}$.
Then we have that $S_r'\cong \prod^{\infty}_{n\geq n(r)}(S^n)'$. By Theorem 3.3(2), if $a=((a_{n})_{n\geq n(r)})\in S_r'$,
then $a=0$ if and only if $a_{n}=0$ for almost all $n$. For any $a\in
H^{0}(S)$, suppose $a$ is represented by $((a_{n})_{n\geq n(r)})\in
S_r'$; then $(\varphi-1)(a)=0$ implies $a_{n}$
becomes constant for $n$ large enough. Therefore we have
\[
H^{0}(S)=\limind_{n\rightarrow \infty }((S_n)')^{\Gamma/\triangle_K} =\limind_{n\rightarrow \infty }(S_n)^\Gamma.
\]

Suppose $(a, b)\in Z^{1}(S)$. By $(2)$ which we have proved, there exists a $c\in S$ such that $(\varphi-1)c=b$.
Then $(a, b)$ is homogeneous to $(a-(\gamma_K-1)c, 0)$, so we can assume that $b=0$.
Suppose $a$ is represented by $((a_{n})_{n\geq n(r)})\in S_r'$ for
some $r$. Then $(\varphi-1)a=0$ implies $a_{n}$ becomes constant
for $n$ is large enough, say for $n\geq n_{0}$. Also $(a,0)$ is a coboundary if and only if $a_{n}\in
(\gamma_K-1)(S^n)'$ for some $n\geq n_{0}$. Then we have
\[
H^{1}(S)=\limind_{n\rightarrow \infty }(S^n)'/(\gamma_K-1).
\]
Since $(S^n)'$ is a finite dimensional $\Qp$-vector space, we have that
$\dim_{L}(S^n)^{\Gamma}=\dim_{L}(S^n)'/(\gamma_K-1)$. Since $(S^n)^{\Gamma}\ra
(S^{n+1})^{\Gamma}$ is injective by Lemma 3.3(2), in order to prove $(1)$, we need only to
verify two things: $(a)$ $(S^n)'/(\gamma-1)\rightarrow
(S^{n+1})'/(\gamma-1)$ is injective, and $(b)$
$\dim_{L}(S^n)^{\Gamma}$ has an upper bound independent of $n$. \

From Lemma 3.6(2), $\L[t]/(t^k)$ is a direct summand of
$\mathbb{Q}_p(\varepsilon^{(n+1)})[t]/(t^k)$ as $\Gamma$-modules.
Hence $S^n$ is a direct summand of $S^{n+1}$ as $\Gamma$-modules,
then $(S^n)'$ is also a direct summand of $(S^{n+1})'$ as
$\Gamma$-modules and this proves $(a)$.

For $s\in\mathbb{N}$, using Lemma 3.6(2) for
$S^{n+s}=\mathbb{Q}_p(\varepsilon^{(n+s)})[t]/(t^k)\otimes_{\L[t]/(t^k)}S^n$,
we see
\[
 \dim_{\Qp}(S^{n+s})^{\Gamma}\leq \sum_{N(\eta)\leq n+s}\sum_{0\leq
i\leq k-1}\dim_{\Qp}(S^n(\eta^{-1}\chi^{i}))^{\Gamma}).
\]
But the right hand side is no more than $\dim_{\Qp}S^n$ because
these characters $\eta^{-1}\chi^{i}$ are distinct. Hence
$\dim_{\Qp}(S^{n+s})^{\Gamma}\leq \dim_{\Qp}S^n$ for any $s$, and this proves $(b)$.
\end{proof}
\begin{corollary}
For any torsion $\m$-module $S$, we have $\chi(S)=0$.
\end{corollary}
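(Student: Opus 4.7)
The plan is to read off the corollary directly from Theorem 3.7, since by definition
\[
\chi(S) = \dim_{\Qp} H^0(S) - \dim_{\Qp} H^1(S) + \dim_{\Qp} H^2(S).
\]
There is essentially no further work to do; the corollary is merely the observation that the three assertions already established assemble into the vanishing of the Euler characteristic.

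More precisely, I would first invoke part (2) of Theorem 3.7 to conclude that $\varphi-1$ is surjective on $S$ and hence $H^2(S)=0$, so the last term in the alternating sum drops out. Then I would invoke part (1) of Theorem 3.7, which gives $\dim_{\Qp} H^0(S) = \dim_{\Qp} H^1(S) < \infty$; in particular both quantities are finite, so $\chi(S)$ is well-defined, and the remaining two terms cancel. This yields $\chi(S)=0$.

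The only issue worth mentioning is a sanity check that the Euler characteristic is well-defined for torsion $\m$-modules, but the finiteness statement in Theorem 3.7(1) together with the vanishing of $H^2$ in Theorem 3.7(2) takes care of this. There is no obstacle to address — all the work has been done in the proof of Theorem 3.7, where the reduction to pure $t^k$-torsion $\m$-modules via Proposition 3.1 and the analysis of $S^n$ via Theorem 3.3 and Lemma 3.6 furnished the finite-dimensional bound on $H^0$ and $H^1$ as well as the surjectivity of $\varphi-1$.
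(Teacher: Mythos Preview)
Your proposal is correct and matches the paper's approach exactly: the paper states this corollary immediately after Theorem 3.7 with no proof, since it is precisely the observation you make that (1) and (2) of Theorem 3.7 combine to give $\chi(S)=\dim_{\Qp}H^0(S)-\dim_{\Qp}H^1(S)+\dim_{\Qp}H^2(S)=0$.
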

\section{Main Theorems}
\subsection{Euler-Poincar\'e characteristic formula} The main goal of
this section is to prove the Euler-Poincar\'e characteristic formula.
\begin{lemma}
For any $\m$-module $D$ and $0\leq i\leq2$, $\dim_{\Qp}H^i(D)$ is finite if and only if $\dim_{\Qp}H^i(D(x))$ is finite. Furthermore, if all of $\dim_{\Qp}H^i(D)$ are finite, then $\chi(D)=\chi(D(x))$.
\end{lemma}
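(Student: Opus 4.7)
The plan is to derive the statement from the short exact sequence of $\m$-modules
\[
0\to D(x)\to D\to D/tD\to 0
\]
obtained by tensoring $D$ with the multiplication-by-$t$ map $\r_K(x)\hookrightarrow \r_K$, where $t=\log([\eps])$. The identities $\varphi(t)=pt$ and $\gamma(t)=\chi(\gamma)t$ show that sending the canonical generator of $\r_K(x)$ to $t$ is $\m$-equivariant, and injectivity follows because $\r_K$ is a domain. Since $D$ is finite free over $\r_K$ (hence flat), tensoring preserves exactness; moreover $D/tD$ is free over $\r_K/t\r_K$, so it is a pure $t$-torsion generalized $\m$-module in the sense of Section 3.1.

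Next I would apply Herr cohomology to this sequence. Because $|\triangle_K|$ is invertible in $\Qp$, taking $\triangle_K$-invariants is exact on $\Qp$-vector spaces, so the Herr complexes fit into a short exact sequence of complexes of $\Qp$-vector spaces, yielding a long exact sequence
\[
\cdots\to H^{i-1}(D/tD)\to H^i(D(x))\to H^i(D)\to H^i(D/tD)\to H^{i+1}(D(x))\to\cdots.
\]
By Theorem 3.7, each $H^j(D/tD)$ is finite-dimensional over $\Qp$ and $H^2(D/tD)=0$, and by Corollary 3.8, $\chi(D/tD)=0$. Since the flanking terms $H^j(D/tD)$ in the long exact sequence are finite-dimensional, the equivalence of finiteness of $H^i(D)$ and $H^i(D(x))$ for each $i=0,1,2$ follows immediately.

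When all relevant cohomology groups are finite-dimensional, the standard additivity of Euler characteristics along the long exact sequence gives
\[
\chi(D)-\chi(D(x))=\chi(D/tD)=0,
\]
which is the desired equality. There is essentially no obstacle here: the key observation is that the twist by $x$ is realized geometrically by multiplication by $t$, reducing the comparison of $D$ and $D(x)$ to the cohomology of a torsion $\m$-module, which is already controlled by the results of Section 3.
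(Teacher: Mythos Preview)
Your proof is correct and follows exactly the paper's approach: identify $D(x)$ with $tD$ via multiplication by $t$, then apply Theorem~3.7 and Corollary~3.8 to the torsion quotient $D/tD$ in the resulting long exact sequence. The paper's proof is a one-line version of precisely this argument.
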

\begin{proof}We identify $D(x)$ with $tD$, then apply Theorem 3.7 and Corollary 3.8 to $D/D(x)$.
\end{proof}

\begin{lemma}We can find a $\m$-module $E$ of rank $d$ such that
\begin{enumerate}
\item[(1)]$E$ is pure and $\mu(E)=1/d$;
\item[(2)]$E$ is a successive extensions of $\r(x^i)$'s, where $i$ is either $0$ or $1$.
\end{enumerate}
\end{lemma}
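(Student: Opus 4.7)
The plan is to construct $E$ as a tower of iterated extensions, starting from $\r(x)$ at the bottom and adjoining one copy of $\r(1)$ at each step. Set $E_1 = \r(x)$ and, for $i = 2, \ldots, d$, choose $E_i$ as a non-split extension
\[
0 \longrightarrow E_{i-1} \longrightarrow E_i \longrightarrow \r(1) \longrightarrow 0
\]
classified by a suitable class $c_i \in H^1(E_{i-1})$ (which classifies such extensions, as discussed in Section~2.1). Taking $E := E_d$ then gives property (2) by construction, with $\r(x)$ at the bottom and $d-1$ copies of $\r(1)$ stacked above. Additivity of rank and degree over short exact sequences yields $\rank E_i = i$ and $\deg E_i = \deg \r(x) = 1$, so $\mu(E_i) = 1/i$; in particular $\mu(E) = 1/d$, which is the slope claim of (1). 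Everything therefore reduces to arranging purity of each $E_i$.

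I would verify purity by induction on $i$, the base case $E_1 = \r(x)$ being tautologically pure as a rank-one object. For the inductive step, assume $E_{i-1}$ is pure of slope $1/(i-1)$, and suppose some candidate $E_i$ admits a nonzero $\varphi$-submodule $N$ with $\mu(N) < 1/i$. Consider the composition $N \hookrightarrow E_i \twoheadrightarrow \r(1)$. If it vanishes, then $N \subset E_{i-1}$, and semistability of $E_{i-1}$ forces $\mu(N) \geq 1/(i-1) > 1/i$, a contradiction. Otherwise, $N/(N \cap E_{i-1})$ is a nonzero $\varphi$-submodule of $\r(1) = \r_K$, hence of slope $\geq 0$ (since $\r(1)$ is itself semistable of slope $0$), while $N \cap E_{i-1}$ has slope $\geq 1/(i-1)$ by the inductive hypothesis. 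A degree-rank calculation $\deg N \geq \rank(N \cap E_{i-1})/(i-1)$ combined with $\rank N = \rank(N \cap E_{i-1}) + 1$ and $\mu(N) < 1/i$ forces $\rank(N \cap E_{i-1}) \leq i-2$ and pins down $N/(N \cap E_{i-1}) \cong \r(1)$. Letting $M$ be the saturation of $N \cap E_{i-1}$ in $E_{i-1}$, the existence of such an $N$ is equivalent to the splitting of the pushout extension $0 \to E_{i-1}/M \to E_i/M \to \r(1) \to 0$, i.e., to $c_i$ lying in the kernel of $H^1(E_{i-1}) \to H^1(E_{i-1}/M)$, which by the long exact sequence of $0 \to M \to E_{i-1} \to E_{i-1}/M \to 0$ equals the image of $H^1(M) \to H^1(E_{i-1})$.

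The main obstacle is therefore to choose $c_i$ outside the union, over saturated $\varphi$-submodules $M \subset E_{i-1}$ of rank at most $i-2$, of the subspaces $\mathrm{image}\bigl(H^1(M) \to H^1(E_{i-1})\bigr)$. I expect to handle this by a dimension count: Colmez's explicit computation of $H^{\bullet}$ for rank-one $\m$-modules (Proposition 2.10 and Corollary 2.13) together with the long exact sequences attached to the tower built so far and Euler--Poincar\'e additivity (for iterated extensions of the known pieces $\r(1)$ and $\r(x)$) allow one to compute $\dim H^1(M)$ and $\dim H^1(E_{i-1})$ inductively and verify that each image $H^1(M) \to H^1(E_{i-1})$ is a proper subspace. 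Since the saturated submodules $M$ are governed by how they meet the already-constructed filtration $0 \subset E_1 \subset \cdots \subset E_{i-1}$, only finitely many isomorphism types need be considered, so a generic $c_i$ avoids every offending image, and the resulting $E = E_d$ satisfies (1) and (2).
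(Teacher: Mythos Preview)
Your construction---building $E$ as iterated non-split extensions of $\r(1)$ by $E_{i-1}$, starting from $E_1=\r(x)$---is exactly the paper's, and so is the opening of your purity argument.  The divergence, and the gap, is in the case analysis that follows.

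You allow $N\cap E_{i-1}$ to have rank up to $i-2$ and then try to choose the class $c_i$ generically so as to avoid a union of images $H^1(M)\to H^1(E_{i-1})$.  But this avoidance step is not justified: even if only finitely many isomorphism types of $M$ occur, the saturated $\varphi$-submodules $M$ of a given type can vary in a continuous family, so you have not shown the bad locus is a \emph{finite} union of proper subspaces.  Moreover the dimension counts you invoke (Proposition~2.10, Corollary~2.13) are proved only for $K=\Qp$ and $p>2$, whereas the lemma is needed for arbitrary $K$.

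The point you are missing is that the whole avoidance discussion is unnecessary, because integrality of the degree forces $N\cap E_{i-1}=0$ outright.  Indeed $\mu(N)<1/i$ and $\rank N\le i$ give $\deg N<1$, hence $\deg N\le 0$.  If $N\cap E_{i-1}\neq 0$ then, by semistability of $E_{i-1}$, it has slope $\ge 1/(i-1)>0$ and hence degree $\ge 1$; since $N/(N\cap E_{i-1})\hookrightarrow\r(1)$ has degree $\ge 0$, additivity gives $\deg N\ge 1$, a contradiction.  Thus $N$ injects into $\r(1)$, is of rank one with $\mu(N)\ge 0$, and combined with $\deg N\le 0$ one gets $\mu(N)=0$; then \cite[Corollary~1.4.10]{Ke06} forces $N\cong\r(1)$, splitting the extension.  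Consequently \emph{any} non-split extension of $\r(1)$ by $E_{i-1}$ is already pure, so you only need $H^1(E_{i-1})\neq 0$, which follows from $\chi(E_{i-1})=(i-1)\chi(\r_K)=-(i-1)[K:\Qp]$ (Lemma~4.1 and Corollary~2.9).
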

\begin{proof}We proceed by induction on $d$. For $d=1$, take $E=\r(x)$. Now
suppose $d>1$ and the lemma is true for $d-1$. Choose such an example $E_{0}$. By Lemma 4.1, we have
$\chi(\r(x))=\chi(\r)$. Since $E_{0}$ is a successive extensions of $\r(x^i)$'s where $i$ is either $0$ or $1$, we have
\[
 \dim_{\Qp}H^{1}(E_{0})\geq-\chi(E_{0})=(\rank E_{0})(-\chi(\r))=(d-1)[K:\Qp]\geq1,
\]
where the last equality follows from Corollary 2.9. Therefore we can
find a nontrivial extension $E$ of $\r_K$ by $E_{0}$. Then
$\mu(E)=1/d$. We claim that $E$ is pure. In fact, suppose $P$ is a
submodule of $E$ such that $\mu(P)<1/d$. Since $\rank P\leq d$, we
get $\deg P\leq 0$, and hence $\mu(P)\leq 0$. Therefore, $P\cap
E_{0}=0$, since $E_{0}$ is pure of positive slope. Therefore the
composite map $P\ra E\ra \r_K$ is injective, we get that
$\mu(P)\geq0$ with equality if and only if it is an isomorphism
\cite[Corollary 1.4.10]{Ke06}. But this forces the extension to be
trivial, which is a contradiction. Obviously $E$ also satisfies
$(2)$, so we finish the induction step.
\end{proof}

\begin{theorem}(Euler-Poincar\'e characteristic formula) For any generalized $\m$-module $D$, we have
\begin{enumerate}
\item[(1)]$\dim_{\Qp}H^{i}(D)<\infty$ for $i=0,1,2$
\item[(2)] $\chi(D)=-[K:\Q]\rank D$.
\end{enumerate}
\end{theorem}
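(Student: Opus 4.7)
The plan is to prove both parts by reducing, via three successive simplifications, to the case of a pure $\m$-module over $\r_{\Qp}$, which is then handled using Lemma 4.2 together with the \'etale Euler--Poincar\'e formula (Corollary 2.9).

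First I would perform three reductions. (a) For a generalized $\m$-module $D$ with torsion submodule $S$, the sequence $0 \to S \to D \to D/S \to 0$ combined with Theorem 3.7 and Corollary 3.8 (which give $H^i(S)$ finite, $H^2(S) = 0$, and $\chi(S) = 0$) reduces the claim to $D/S$, which has the same rank. (b) Shapiro's Lemma for generalized $\m$-modules (Theorem 3.5) gives $H^i(D) \cong H^i(\Ind_K^{\Qp} D)$ with $\rank_{\r_{\Qp}} \Ind_K^{\Qp} D = [K:\Qp] \rank_{\r_K} D$, so I may assume $K = \Qp$. (c) Kedlaya's slope filtration (Theorem 1.5) presents $D$ as a successive extension of pure $\m$-submodules, and once the pure case is settled the general one follows from the long exact sequences together with additivity of $\chi$ and $\rank$.

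Suppose then that $D$ is pure of slope $s = c/d$ (in lowest terms, $d \geq 1$) and rank $n$ over $\r_{\Qp}$. Apply Lemma 4.2 to obtain $E$ of rank $d$, pure of slope $1/d$, filtered with quotients $\r(x^{i_j})$, $i_j \in \{0,1\}$. Choose $X$ to be a suitable tensor power of $E$ or $E^\vee$ so that $D \otimes X$ has slope zero (for instance $X = (E^\vee)^{\otimes c}$ when $c \geq 0$); then $D \otimes X$ is \'etale of rank $nd^c$, so by Corollary 2.9 all $H^i(D \otimes X)$ are finite and $\chi(D \otimes X) = -nd^c$. The induced filtration on $X$ has rank-one quotients $\r(x^\epsilon)$ for various $\epsilon \in \ZZ$, so $D \otimes X$ carries a filtration $0 = F_0 \subset F_1 \subset \cdots \subset F_{d^c} = D \otimes X$ with $F_j/F_{j-1} \cong D(x^{\epsilon_j})$.

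For the formula, once finiteness is established, Lemma 4.1 gives $\chi(D(x^\epsilon)) = \chi(D)$ for every $\epsilon \in \ZZ$, so additivity of $\chi$ in the filtration yields $\chi(D \otimes X) = d^c \chi(D)$; comparison with the \'etale computation gives $\chi(D) = -n$. The main obstacle is establishing finite-dimensionality of $H^i(D)$, which I would handle by a two-step bootstrap. First, $F_1 \cong D(x^{\epsilon_1})$ embeds into the \'etale $F_{d^c}$, so left-exactness of $H^0$ and finiteness of $H^0(F_{d^c})$ force $H^0(F_1)$ finite; by Lemma 4.1, all $H^0(D(x^\epsilon))$ are finite. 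Second, starting from the \'etale finiteness of every $H^i(F_{d^c})$ and stepping down the filtration through the long exact sequences of $0 \to F_{j-1} \to F_j \to D(x^{\epsilon_j}) \to 0$, the newly-known finiteness of $H^0(D(x^{\epsilon_j}))$ forces $H^1(F_{j-1})$ finite from $H^1(F_j)$ finite; iterating downward gives $H^1(F_1)$ finite, hence $H^1(D)$ finite by Lemma 4.1. An identical argument, now using the established finiteness of the $H^1(D(x^{\epsilon_j}))$, propagates $H^2$ finiteness down the filtration.
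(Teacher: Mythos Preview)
Your proposal is correct and follows the same overall architecture as the paper: reduce to a free $\m$-module over $\r_{\Qp}$, then to a pure one via the slope filtration; tensor with an auxiliary module built from Lemma~4.2 so that the result is \'etale; and read off finiteness and the value of $\chi$ from long exact sequences together with Lemma~4.1. The one substantive difference is how $H^0$ finiteness is obtained. The paper proves directly that $\dim_{\Qp} H^0(D)\le \rank D$ for \emph{every} $\m$-module, by embedding $D_r$ into $D_{\dif}^{+,n}(D)[1/t]$ and arguing that $\Gamma$-fixed vectors independent over $\Qp$ stay independent over $K_n((t))$; with that bound in hand, a single short exact sequence $0\to D(x^j)\to D\otimes F\to G\to 0$ suffices (the map $H^0(G)\to H^1(D(x^j))$ has finite image because $H^0(G)$ is already known to be finite). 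You instead extract $H^0$ finiteness from the inclusion $F_1\cong D(x^{\epsilon_1})\hookrightarrow D\otimes X$ into an \'etale module and then bootstrap $H^1$ and $H^2$ by descending the full filtration step by step. Your route is more self-contained in that it never touches the $D_{\dif}$ machinery, at the price of an extra inductive walk down the filtration; the paper's route gives a sharper a~priori bound on $H^0$ and then needs only one long exact sequence rather than iterating through all $d^{|c|}$ steps. Incidentally, the paper announces an ``induction on the rank of $D$'' but never actually invokes the inductive hypothesis; your write-up correctly omits this.
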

\begin{proof}First by Theorem 3.6, we reduce to the case $D$ is a $\m$-module. Then by Theorem 2.2, we can further reduce to the case where $K=\Qp$. We first show that $\dim_{\Qp}H^{0}(D)\leq d=\rank D$ for any $D$.
For $r$ large enough, $D_{r}$ is defined and we have
$D_{r}\hookrightarrow D_{\dif}^{+,n}(D)[1/t]$ for $n\geq n(r)$. We
claim that $\dim_{\Qp}(D_{\dif}^{+,n}(D)[1/t])^{\Gamma}\leq d$.
Otherwise we can find $e_{1}, e_{2},..., e_{d+1}\in
(D_{\dif}^{+,n}(D)[1/t])^{\Gamma}$ that are linearly
independent over $\Qp$. But $D_{\dif}^{+,n}(D)[1/t]$ is a
$d$-dimensional vector space over $\L((t))$. So $e_{1},
e_{2},..., e_{d+1}$ are linearly dependent over $\L((t))$. Then
there is a minimal $k$ such that $k$ of these vectors are linearly
dependent over $\L((t))$. Assume $e_{1}, e_{2},..., e_{k}$ are
$k$ such vectors and $\sum_{i=1}^{k}a_{i}e_{i}=0$. Obviously
$a_{1}\neq 0$ since $k$ is minimal, so
$e_{1}+\sum_{i=2}^{k}(a_{i}/a_{1})e_{i}=0$. Using $\gamma$, we get
$e_{1}+\sum_{i=2}^{k}\gamma(a_{i}/a_{1})e_{i}=0$. By minimality of
$k$, we must have $\gamma(a_{i}/a_{1})=a_{i}/a_{1}$. But $(\L((t)))^{\Gamma}=\Qp$, so $e_{1}, e_{2},..., e_{k}$
are linearly dependent over $\Qp$. That is a contradiction. So we get
$\dim_{\Qp}(D_{r})^{\Gamma}\leq d$ for any $r$; therefore
$\dim_{\Qp}H^{0}(D)\leq\dim_{\Qp}D^{\Gamma}\leq d$.

We will prove Theorem 4.3 by induction on the $\rank$ of $D$. Assume for some $d\geq 1$ the theorem holds for all $\m$-modules
which have $\rank$ less than $d$. Now suppose $\rank D=d$. Note that both of $(1)$ and $(2)$ are preserved under
taking extensions. Thus by the slope filtration theorem we can further assume that $D$ is pure.
Suppose $\mu(D)=c/d$. Let $E$ be as in Lemma 4.2. Then $(\otimes ^{s}_{i=1} E)(x^k)$ is pure of slope $k+s/d$. In particular,
we can find a pure $\m$-module $F$ which is a successive extensions
of $\r (x^i)$'s and $\mu(F)=-c/d$. Consider the \'etale $\m$-module $D\otimes F$.
By Corollary 2.9 we get
\begin{center}
 $\chi(D\otimes F)=-\rank(D\otimes F)=-\rank F\rank D$.
\end{center}

On the other hand, by the construction of $F$, $D\otimes F$ is
a successive extensions of $D(x^i)$'s. So in particular there exists a
$j\in\ZZ$ such that $D(x^j)$ is a saturated submodule of $D\otimes F$.
Let $G$ be the quotient, so we have the long exact sequence of
cohomology:
\[
 \cdots\rightarrow H^{0}(G)\rightarrow H^{1}(D(x^j))\rightarrow H^{1}(D\otimes
F)\rightarrow\ H^1(G)\rightarrow H^2(D(x^j))\rightarrow H^2(D\otimes F)\cdots.
\]
Since $D\otimes F$ is \'etale, $\dim_{\Qp}H^{1}(D\otimes F)$ is finite
by Theorem 2.6. Hence  $\dim_{\Qp}H^{1}(D(x^j))$ is finite; then $\dim_{\Qp}H^{1}(D(x^i))$ is finite for any $i\in\ZZ$.
If $\dim_{\Qp}H^2(D)=\infty$, by Lemma 4.1 we have that $\dim_{\Qp}H^2(D(x^i))=\infty$ for any $i$.
This implies $\dim_{\Qp}H^2(D\otimes F)=\infty$ from the above sequence. But this is a contradiction since $D\otimes F$ is \'etale. Therefore $\dim_{\Qp}H^2(D(x^i))$ is finite for any $i$. By Lemma 4.1, we have that $\chi(D)=\chi(D(x^i))$. By the additivity of $\chi$, we get
\begin{center}
 $\chi(D\otimes F)=(\rank F)\chi(D)$;
\end{center}
hence
\[
 (\rank F)\chi(D)=-\rank F\rank D,
\]
\begin{center}
$\chi(D)=-\rank D$.
\end{center}
The induction step is finished.
\end{proof}

\subsection{Tate local duality theorem} The main topic of this
section is to prove the Tate local duality theorem: the cup product
\[
H^i(D) \times H^{2-i}(D^\vee(\omega))\to H^2(\omega)\cong \Qp
\]
is a perfect pairing for any $\m$-module $D$ and $0 \leq i\leq 2$.
\begin{lemma}
Suppose $0\rightarrow D'\rightarrow D\rightarrow D''\rightarrow0$ is an exact
sequence of  $\m$-modules. If Tate local duality holds for any two
of them, it also holds for the third one.
\end{lemma}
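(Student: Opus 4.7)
The plan is to apply a five-lemma argument to a ladder of long exact sequences joined by the cup product pairing. Since $\r_K$ is a Bezout domain and $\omega$ is an invertible character, the functor $D \mapsto D^\vee(\omega)$ is exact, so the given sequence dualizes to an exact sequence
\[
0 \to (D'')^\vee(\omega) \to D^\vee(\omega) \to (D')^\vee(\omega) \to 0.
\]
Both short exact sequences yield long exact sequences of $\Qp$-vector spaces via Herr's complex, and by Theorem 4.3 every cohomology group that appears is finite dimensional, so taking $\Qp$-linear duals preserves exactness.

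Next I would assemble the commutative ladder whose top row is the long exact sequence
\[
\cdots \to H^i(D') \to H^i(D) \to H^i(D'') \xrightarrow{\partial} H^{i+1}(D') \to \cdots
\]
and whose bottom row is the $\Qp$-linear dual of the long exact sequence for the dualized triple, indexed so that each entry $H^j$ on top lines up with the dual $H^{2-j}(\,\cdot^\vee(\omega))^\vee$ below:
\[
\cdots \to H^{2-i}((D')^\vee(\omega))^\vee \to H^{2-i}(D^\vee(\omega))^\vee \to H^{2-i}((D'')^\vee(\omega))^\vee \to H^{1-i}((D')^\vee(\omega))^\vee \to \cdots
\]
The vertical maps are those induced by the cup product pairing into $H^2(\omega) \cong \Qp$; by definition of Tate duality these are isomorphisms precisely for the modules in the hypothesis. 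Thus in the ladder two out of every three consecutive columns consist of isomorphisms, and the five lemma forces the remaining columns to consist of isomorphisms as well, which is exactly Tate duality for the third module.

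The one nontrivial point is the commutativity of the squares that contain a connecting homomorphism $\partial$, i.e.\ the identity
\[
\langle \partial \alpha, \beta \rangle = \pm \langle \alpha, \partial' \beta \rangle
\]
for $\alpha \in H^i(D'')$ and $\beta \in H^{1-i}((D')^\vee(\omega))$, where $\partial'$ is the connecting map of the dualized sequence. This is the standard compatibility of cup products with snake maps, and can be verified by a direct cochain-level calculation using the explicit formulas for the cup product and the boundary map on Herr's complex given in Section 2.1; any sign ambiguity is harmless since it is bijectivity, not the precise value, that the five lemma consumes. The squares not involving $\partial$ commute automatically by functoriality of the cup product under morphisms of $\m$-modules, so the whole ladder commutes and the argument concludes.
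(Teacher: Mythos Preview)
Your proposal is correct and follows essentially the same approach as the paper: reformulate perfectness of the pairing as bijectivity of the induced map $H^{2-i}(D^\vee(\omega)) \to H^i(D)^\vee$, set up the commutative ladder of long exact sequences, and apply the Five Lemma. The paper's proof is terser, omitting the verification of commutativity at the connecting-map squares and the invocation of finite dimensionality that you spell out.
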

\begin{proof}First note that the pairing $H^i(D) \times H^{2-i}(D^\vee(\omega))\to H^2(\omega)\cong \Qp$
is perfect if and only if the induced map
$H^{2-i}(D^\vee(\omega)) \to H^i(D)^\vee$ is an isomorphism.
From the long exact sequence of cohomology, we get the following
commutative diagram.

\[
\xymatrix{
... \ar[r] & H^{2-i}(D'^\vee(\omega))\ar[d] \ar[r]
& H^{2-i}(D^\vee (\omega)) \ar[d] \ar[r]
& H^{2-i}(D'^\vee(\omega)) \ar[d] \ar[r] & ... \\
... \ar[r] & H^i(D'')^\vee \ar[r] & H^i(D)^\vee \ar[r] &
H^i(D')^\vee \ar[r] & ... }
\]
Then the lemma follows from the Five Lemma.
 \end{proof}
\begin{lemma}Tate local duality is true for $\r(|x|)$.
\end{lemma}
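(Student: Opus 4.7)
The plan is to verify Tate duality for $\r(|x|)$ directly, leveraging the complete description of rank-$1$ cohomology in Section 2.4. First, note that $\r(|x|)^\vee(\omega)\cong \r(x)$, since $|x|^{-1}\cdot \omega = |x|^{-1}\cdot x|x| = x$. Since neither $|x|$ nor $x$ is of the form $x^{-i}$ or $\omega x^i$ with $i\in \mathbb{N}$, Proposition 2.10 and Corollary 2.13 give
\[
H^0(\r(|x|)) = H^2(\r(|x|)) = H^0(\r(x)) = H^2(\r(x)) = 0,
\]
while $\dim_{\Qp}H^1(\r(|x|)) = \dim_{\Qp}H^1(\r(x)) = 1$, and $H^2(\r(\omega))\cong \Qp$ is generated by $\overline{1/T}$ by Proposition 2.12(2). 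Thus the pairings for $i = 0, 2$ are trivially perfect, and the case $i = 1$ reduces to showing that the cup product of a pair of generators is nonzero in $H^2(\r(\omega))$.

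For $H^1(\r(|x|))$, whose Herr complex uses $\gamma - 1$ and $p^{-1}\varphi - 1$, the pair $(0, 1)\in \r^2$ is visibly a cocycle. It is not a coboundary: a trivializer $c$ would satisfy $\varphi(c) = pc$, so $c \in \r^{\varphi=p}$, a subspace preserved by $\gamma$; but $1 \notin \r^{\varphi=p}$, so $(\gamma - 1)c$ cannot equal $1$. For $H^1(\r(x))$, with operators $\chi(\gamma)\gamma - 1$ and $p\varphi - 1$, I would take a cocycle of the form $(1/T, d)$ with $d\in \r$ solving $(\chi(\gamma)\gamma - 1)d = (p\varphi - 1)(1/T)$; such a $d$ exists by the standard surjectivity of $\chi(\gamma)\gamma - 1$ on a suitable subspace of $\r$, as developed in Colmez's analysis of rank-$1$ modules \cite{C05} (for instance, via the $\psi$-version of the Herr complex as in Lemmas 2.4, 2.5). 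This class is not a coboundary, since coboundaries have first component of the form $(\chi(\gamma)\gamma - 1)h$, whose residue vanishes by the identity $\Res(\gamma(f)) = \chi(\gamma)^{-1}\Res(f)$ from Section 2.4, whereas $\Res(1/T) = 1 \neq 0$; hence $[(1/T,d)]$ generates $H^1(\r(x))$.

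Applying the cup product formula from Section 2.1,
\[
[(0,1)] \cup [(1/T,d)] = \overline{1\cdot \chi(\gamma)\gamma(1/T) - 0\cdot p\varphi(d)} = \overline{\chi(\gamma)\gamma(1/T)}\in H^2(\r(\omega)),
\]
whose image under $\Res$ equals $\chi(\gamma)\cdot \chi(\gamma)^{-1}\Res(1/T) = 1 \neq 0$. This proves non-degeneracy of the pairing. The main technical obstacle is the existence of the auxiliary element $d\in \r$, which rests on the surjectivity of $\chi(\gamma)\gamma - 1$ on an appropriate subspace; once this is secured, the remainder reduces to residue bookkeeping using the identities of Section 2.4.
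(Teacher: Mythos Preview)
Your proposal has two substantial problems.

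First, a scope mismatch: Proposition~2.10 and Corollary~2.13 are stated and proved only for $K=\Qp$ and $p>2$ (see the opening paragraph of Section~2.4). Lemma~4.5, however, is invoked in the proof of Theorem~4.7 for arbitrary $K$ and arbitrary $p$, so an argument resting on Section~2.4 cannot establish the lemma in the generality required.

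Second, and more seriously, both of your explicit cocycles fail. In $H^1(\r(|x|))$ the class $[(0,1)]$ vanishes: taking the constant $c=p/(1-p)\in\Qp\subset\r$ one has $(\gamma-1)c=0$ and $(p^{-1}\varphi-1)c=(p^{-1}-1)c=1$, so $(0,1)=d_1(c)$ is a coboundary. In $H^1(\r(x))$ your pair $(1/T,d)$ cannot be a cocycle at all: the cocycle condition reads $(\chi(\gamma)\gamma-1)d=(p\varphi-1)(1/T)$, but applying $\Res$ to both sides and using the identities $\Res(\gamma f)=\chi(\gamma)^{-1}\Res(f)$, $\Res(\varphi f)=\Res(f)$ from Section~2.4 gives $0$ on the left and $p\cdot\Res(1/T)-\Res(1/T)=p-1\neq 0$ on the right. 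Hence no such $d$ exists. Your cup-product computation therefore pairs a coboundary with a nonexistent cocycle; that the output has nonzero residue is not a contradiction but a symptom of this breakdown.

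The paper avoids any explicit cocycle bookkeeping. It builds a nonsplit extension $0\to\r(x)\to D\to\r(|x|)\to 0$, checks that $D$ is forced to be \'etale, and then uses the already-known Tate duality for \'etale modules (Corollary~2.9) on $D$ and $D^\vee(\omega)$ together with a diagram chase between the two long exact sequences. This works uniformly in $K$ and $p$. Remark~4.6 does note that an explicit verification is possible when $K=\Qp$ and $p>2$, but for the variant module $\r(x^{-1})$ and with different (correctly chosen) cocycles.
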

\begin{proof}
By the Euler-Poincar\'e formula, we get $\dim_{\Qp}H^1(x|x|^{-1})\geq-\chi(\r(x^{-1}|x|))=[K:\Qp]$.
Hence there exists a nonsplit short exact sequence of $\m$-modules
\[
0\ra\r(x)\longrightarrow D\longrightarrow\r(|x|)\longrightarrow 0.
\]
Then $\deg(D)=\deg(\r(x))+\deg(\r(|x|))=0$ and furthermore we see $D$ is forced to be \'etale. In fact, suppose $P$ is a
submodule of $D$ such that $\mu(P)<0$, then $P$ is necessary of $\rank$ $1$ and hence $\mu(P)\leq-1$.
Then $P\cap \r(x)=0$, hence $P$ maps injectively to $\r(|x|)$. So we have $\mu(P)\geq-1$. Therefore we conclude that
$\mu(P)=-1$; but this forces $P$ to map isomorphically to $\r(|x|)$, which is a contradiction. If $a\in H^0(x)$, then $\varphi(a)=a/p$.
It implies $\varphi(at)=at$, yielding $at$ is a constant, therefore $a=0$. If $a\in H^0(|x|)$,
then $\gamma(a)=a$ for any $\gamma\in\Gamma$, so $a$ is a constant. But $\varphi(a)=pa$, hence
$a=0$. So $ H^0(D) = 0$ by the long exact sequence of cohomology. Take the dual exact sequence
\[
0\longrightarrow\r(x)\longrightarrow D^\vee(\omega)\longrightarrow\r(|x|)\longrightarrow 0.
\]
By usual Tate duality, $H^0(D)$ is dual to $H^2(D^\vee(\omega))$, so $H^2(D^\vee(\omega)) = 0$.
Hence $H^2(|x|)=0$, so $\dim_{\Qp}H^1(|x|)=[K:\Qp]$ by the Euler-Poincar\'e formula.
The cup pairing gives a morphism of long exact sequences:
\[
\xymatrix{
...\ar[r]&0\ar[d]\ar[r] & H^{1}(x)\ar[d] \ar[r]
& H^{1}(D^\vee (\omega)) \ar[d] \ar[r]
& H^{1}(|x|) \ar[d] \ar[r] &H^2(x)\ar[d]\ar[r]& ... \\
... \ar[r] & H^2(x)^\vee \ar[r] & H^1(|x|)^\vee \ar[r] &
H^1(D)^\vee \ar[r] & H^1(x)^\vee\ar[r]&0\ar[r]&... }
\]
in which $H^1(D^\vee(\omega))\ra H^1(D)^\vee$ is an isomorphism by usual Tate duality.
Then diagram chasing shows that $H^1(x)\ra H^1(|x|)^\vee$ is injective, so
$H^1(x)$ has $\Qp$-dimension $\leq [K:\Qp]$. Then by the Euler-Poincar\'e formula, $\dim_{\Qp}H^1(x)=[K:\Qp]$ and $H^2(x) = 0$.
Therefore $H^1(x)\ra H^1(|x|)^\vee$ is an isomorphism.
\end{proof}
\begin{remark}Note that we can use $\r(x^{-1})$ instead $\r(|x|)$ in the proof of Theorem 4.7 (see below). In case $K=\Qp$ and $p>2$, we can verify the Tate duality for $\r(x^{-1})$ by explicit calculations. Recall that $\Res: H^2(\omega)\rightarrow \Qp$ is an isomorphism.
For $i=0$, $H^0(x^{-1})=\Qp\cdot t$, $H^2(\omega x)=\Qp\cdot
\overline{(1+T)/T^2}$, the cup product of $t$ and $\overline{(1+T)/T^2}$ is
$\overline{t(1+T)/T^2}$, and $\Res(t(1+T)/T^2)=1$. For $i=1$, $H^1(x^{-1})$ has a basis$\{(\bar{t}, 0), (0, \bar{t})\}$.
From \cite[Proposition 3.8]{C05}, $H^1(\omega)$ has a basis $\{(\bar{a},
\overline{1/T+1/2}), (\overline{1/T},\bar{b})\}$, where $a\in T\r^+$
and $b\in (\calE^\dagger)^{\psi=0}$. Furthermore,
$\partial:H^1(\omega)\rightarrow H^1(\omega x)$ is an isomorphism,
therefore $\{(\overline{\partial a}, \overline{-(1+T)/T^2}),
(\overline{-(1+T)/T^2},\overline{\partial b})\}$ is a basis of
$H^1(\omega x)$. A short compuatation shows that under the given
basis, the matrix of cup product is
$\left(
\begin{smallmatrix}
 1&* \\
 0&-1
\end{smallmatrix}
\right)$. For $i=2$, there is nothing to say since
$H^2(-1)=H^0(\omega x)=0$.
\end{remark}
\begin{theorem} The Tate local duality is true for all
$\m$-modules.
\end{theorem}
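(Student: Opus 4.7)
The plan is to reduce the theorem in three stages to the \'etale case handled by Corollary 2.9. First I would reduce to $K = \Qp$ using Shapiro's lemma (Theorem 3.5): the isomorphism $H^i(D) \cong H^i(\Ind_L^K D)$ is functorial and compatible with cup products, and since induction commutes with the formation of duals and with the $\omega$-twist (because $\omega$ corresponds to the cyclotomic character for both $K$ and $L$), one checks that the perfect-pairing statement for $D$ is equivalent to the one for $\Ind_L^K D$. Then I would reduce to the case that $D$ is pure: the slope filtration theorem (Theorem 1.4) gives $D$ a filtration by $\m$-submodules with pure successive quotients, and an induction on the filtration length together with Lemma 4.4 reduces the perfect-pairing statement for $D$ to the perfect-pairing statement for each pure piece.

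The heart of the argument then handles a pure $D$ of slope $c/d$ over $\r_{\Qp}$. Mimicking the construction from the proof of Theorem 4.3, I would use Lemma 4.2 to build a $\m$-module $F = (\otimes^s E)(x^k)$ that is pure of slope $-c/d$ and admits a filtration whose successive quotients are rank-one modules of the form $\r(x^{a_j})$. Then $D \otimes F$ is pure of slope $0$, hence \'etale, so Tate duality for $D \otimes F$ is supplied by Corollary 2.9. Tensoring the filtration of $F$ by $D$ produces a filtration of $D \otimes F$ whose successive quotients are the twists $D(x^{a_j})$; applying Lemma 4.4 repeatedly to this filtration would then propagate duality from $D \otimes F$ down to $D$, provided duality has already been established for the intermediate twists.

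The hard part will be arranging this last bootstrap. Since every quotient $D(x^{a_j})$ has the same rank as $D$, no naive rank induction dispatches the intermediate pieces. The remedy I envision is a combined induction on the rank of $D$ together with a secondary parameter measuring, for example, the distance of the slope from an integer or the total twist $\sum |a_j|$ appearing in the chosen filtration. At each inductive step one uses the flexibility in the choice of the parameters $s$ and $k$ in the construction of $F$ to ensure that all quotients $D(x^{a_j})$ other than the target have already been settled. The base cases of the induction are Corollary 2.9 for \'etale modules and Lemma 4.5 together with Remark 4.6, which dispose of the rank-one building blocks $\r(|x|)$ and $\r(x^{-1})$, and from which duality for $\r(x)$ follows by Lemma 4.4 applied to the \'etale extension constructed in the proof of Lemma 4.5.
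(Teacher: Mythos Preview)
Your reduction to pure modules via the slope filtration and Lemma 4.4 is fine, and the idea of tensoring with the auxiliary module $F$ of Lemma 4.2 is natural since it worked for the Euler--Poincar\'e formula. But the bootstrap you sketch in the last paragraph does not terminate. The successive quotients of $D\otimes F$ are the twists $D(x^{a_j})$, each of which is pure of the \emph{same} rank $d$ as $D$ and has slope $c/d + a_j$, hence the \emph{same} fractional part of the slope as $D$. So neither rank nor ``distance of the slope from an integer'' decreases, and ``total twist $\sum|a_j|$'' is an invariant of the auxiliary construction, not of the module whose duality you are trying to establish. In the Euler--Poincar\'e argument this circularity is broken because $\chi$ is a number: Lemma 4.1 gives $\chi(D(x^{a_j}))=\chi(D)$ for every $j$, so $\chi(D\otimes F)=(\rank F)\chi(D)$ and one simply divides. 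Tate duality is not a numerical invariant, so there is no analogue of this division step, and Lemma 4.4 only lets you conclude duality for one piece once you already know it for the others.

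The paper avoids this by inducting on $\deg(D)=s$ (after arranging $\mu(D)\geq 0$ by passing to $D^\vee(\omega)$), and by building an \emph{extension} rather than a tensor product. Since $\dim_{\Qp}H^1(D(|x|^{-1}))\geq d\geq 1$ by the Euler--Poincar\'e formula, there is a nonsplit extension
\[
0 \longrightarrow D \longrightarrow E \longrightarrow \r(|x|) \longrightarrow 0
\]
with $\deg(E)=s-1$. One checks that every step $E_j/E_{j-1}$ of the slope filtration of $E$ has nonnegative slope and hence degree $<s$; by the inductive hypothesis duality holds for each such quotient, hence for $E$ by Lemma 4.4, and then for $D$ by Lemma 4.4 again together with Lemma 4.5 for $\r(|x|)$. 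The point is that passing from $D$ to $E$ genuinely lowers the degree, which the tensor construction does not do. Your proposal is missing this idea.
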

\begin{proof}
By Lemma 4.4 and the slope filtration theorem, we need only to prove the
theorem for pure $\m$-modules. Suppose $D$ is a pure $\m$-module of
rank $d$. By passing to $D^{\vee}(\omega)$, we can further assume
$\mu(D)=s/d\geq 0$. We proceed by induction on $s=\deg(D)$.  \

If $s=0$, $D$ is \'etale, so the theorem follows from Corollary 2.6. Now
suppose $s>0$ and the theorem is true for any pure $\m$-module $D$
which satisfies $0\leq\deg(D)< s$. By the Euler-Poincar\'e formula, we have
$\dim_{\Qp}H^{1}(D(|x|^{-1}))\geq d \geq 1$. Hence we can find a nontrivial
extension $E$ of $\r(|x|)$ by $D$. Then $\deg(E)=s-1$ and
$\mu(E)=(s-1)/(d+1)<\mu(D)$. Suppose the slope filtration of $E$ is $0=E_{0}\subset
E_{1}\subset...\subset E_{l}=E$. Then $\mu(E_{1})\leq
\mu(E)<\mu(D)$. Note that $\deg(E_1) = \deg(E_1 \cap D) + \deg(E_1/(E_1 \cap D))$. Since
 $D$ is pure of positive slope, $\deg(E_1 \cap D) > 0$ unless $E_1 \cap D
 = 0$. Since $E_1/(E_1 \cap D)$ is the image of
 $E_1 \to \mathcal{R}(|x|)$, $\deg(E_1/(E_1 \cap D)) \geq -1$.
 Consequently, $\deg(E_1) \geq 0$ unless $E_1 \cap D = 0$ and $E_1/(E_1\cap D) \cong \mathcal{R}(|x|)$, but these imply that the extension splits, which it does not by construction. So we have $\mu(E_{j}/E_{j-1})\geq
0$ for each $j$. Note that $\sum_{j=1}^{l}\deg(E_{j-1}/E_{j})=\deg
E=s-1$. Thus for each $j$, we have $\deg(E_{j-1}/E_{j})<s$. Hence
$E_{j-1}/E_{j}$ satisfies the theorem by induction. Therefore the
theorem is true for $E$ by Lemma 4.4. By Lemma 4.5 the theorem holds for $\r(|x|)$. Therefore the same is true for $D$ by Lemma 4.5 again.
This finishes the induction step.
\end{proof}
\begin{remark} Our approach to Tate local duality is similar to the way we
established the Euler-Poincar\'e formula. However, in the case of Tate local
duality, Euler-Poincar\'e formula has provided the existence of
nontrivial extensions, so we don't need the reduction steps on
torsion $\m$-modules that were used in the proof of Theorem 4.3.
\end{remark}

\subsection*{Acknowledgments}
Many thanks are due to my advisor, Kiran Kedlaya, for many useful
discussions and crucial suggestions, and for spending much time
reviewing early drafts of this paper. In this last regard, thanks
are due as well to Jay Pottharst. The author would also like to
thank Ga\"{e}tan Chenevier for helpful conversations at the Clay
Eigensemester of 2006 which inspired the author to start this
project. Thanks Laurent Berger for useful correspondence. Thanks
finally to Chris Davis for helping with the English of this paper.
Any remaining mistakes are my own.

\end{document}